%% file: VJ26main.tex
\documentclass[11pt,a4paper]{article}
\usepackage{macros}

\title{Norm Bounds for Sparse Random Tensors and\\ Spectral Gap of Random Hypergraphs}

\newcommand{\email}[1]{\href{mailto:#1}{\texttt{#1}}}

\author{
  Kevin Lucca%
  \thanks{ETH Zürich. \email{kevin.lucca@ifor.math.ethz.ch}}
  \and
  Lucas Pesenti%
  \thanks{ETH Zürich. \email{lpesenti@ethz.ch}.}
}

\begin{document}

    \maketitle

    \begin{abstract}
        Friedman and Wigderson (1995) introduced a notion of second
        eigenvalue for hypergraphs that generalizes the second eigenvalue of the
        adjacency matrix of a graph. We show that $r$-uniform
        \Erdos-\Renyi hypergraphs on $n$ vertices exhibit a spectral gap
        as soon as their expected number of hyperedges $m$ satisfies $m \gg n^{r/2}$. Prior work identified this scale only up to logarithmic factors; removing these factors is the main technical challenge.
        
        Our proof overcomes this obstacle through an explicit decomposition of an associated selector process,
        inspired by a generic decomposition
        theorem of
        Talagrand (2021).
        As a consequence of our techniques, we obtain improved norm bounds for sparse random tensors with independent entries. Finally, under a mild moment equivalence assumption, we extend to tensors a seminal result of Seginer (2000) for random matrices with i.i.d.\ entries.
    \end{abstract}

    \input{IntroVJ26}

\input{BernsteinPartVJ26}

\input{WitnessConstructionVJ26}

\input{PositivePartVJ26}
    \input{ExtensionsVJ26}

    \phantomsection
    \addcontentsline{toc}{section}{References}
	\bibliographystyle{alpha}
    { \bibliography{main}}

    \appendix
    \input{AuxillaryLemmasVJ26}

\end{document}

%% file: IntroVJ26.tex
\section{Introduction}
\label{sec:intro}

Can we develop a spectral theory for hypergraphs that matches the power of the
well-developed spectral theory of graphs? This long-standing question motivates
the development of new tools for studying matrices that extend to tensors.

In this line of work, Friedman and Wigderson~\cite{friedmanWigderson} introduced a notion of second
eigenvalue for hypergraphs that generalizes the second eigenvalue of
the adjacency matrix of a graph.
They also asked for the value of this second eigenvalue for {\em random} hypergraphs.

\begin{definition}[\Erdos-\Renyi model]\label{def:erdos}
    Let $n, r\in \N$ and $p=p(r,n)\in [0,1]$. Consider the distribution over
	$r$-uniform hypergraphs on the vertex set $[n] \defeq \{1, \ldots, n\}$
    obtained by independently including each subset 
    of vertices $e\subseteq [n]$
    of size $|e|=r$ as a hyperedge with
    probability $p$. 
\end{definition}

\noindent We represent a hypergraph
by its adjacency tensor $T\in (\R^n)^{\otimes r}$, defined by $T_{i_1,\ldots,i_r}=1$
if $\{i_1, \ldots, i_r\}$
is a hyperedge, and $T_{i_1, \ldots, i_r}=0$
otherwise.
Let $\calH_r(n,p)$ 
denote the distribution of the 
adjacency tensors
of \Erdos-\Renyi hypergraphs.

\begin{definition}[Injective tensor norm]
    The injective norm of $T\in (\R^n)^{\otimes r}$ is
    \begin{equation*}
        \normin{T} \defeq \max_{X\in \mathcal X} \Abs{\Iprod{T, X}}\,,
    \end{equation*}
    where $\mathcal X \defeq \{x_1\otimes \ldots \otimes x_r : \|x_1\|_2= \ldots= \|x_r\|_2 = 1\}$ is the set
    of unit rank-1 tensors.
\end{definition}

When $T$ is
invariant under permutations of its indices,
the injective norm can equivalently be written as
the coupled
maximization problem over $\{x^{\otimes r} : \|x\|_2 = 1\}$~\cite{ancestralPaper}.

Following the terminology of~\cite{friedmanWigderson}, when $T\sim \calH_r(n,p)$, we call
$\normin{T-\E  T}$ the {\em second eigenvalue}\footnote{This terminology is not entirely standard, as one usually subtracts the top eigenvector contribution from $T$ to define the second eigenvalue.} of $T$,
and $\normin{T}-\normin{T-\E  T}$ the {\em spectral gap} of
$T$. More explicitly, the second eigenvalue of an \Erdos-\Renyi hypergraph is:
\[
    \normin{T - \E T} = \max_{\|x_1\|_2 = \ldots = \|x_r\|_2 = 1} \sum_{\substack{i_1,\ldots,i_r=1\\\text{distinct}}}^n (T_{i_1,\ldots,i_r} - p) x_{1,i_1}\ldots x_{r,i_r}\,.
\]

In the graph case ($r=2$), $\normin{ T}$ and $\normin{T-\E  T}$ are,
respectively, the spectral norm of the uncentered and centered adjacency
matrices of an \Erdos-\Renyi{} random graph. 
These quantities have been
central in the development of spectral graph theory. In sparse random graphs,
the emergence of a spectral gap coincides with the existence
of efficient algorithms certifying (nearly) tight bounds on large cliques and independent sets; see Section~\ref{sec:refutation}. The emergence of a spectral gap also marks the region in which the expander mixing lemma holds; see Example~\ref{ex:mixingLemma}.

\subsection{Emergence of a spectral gap in \Erdos-\Renyi hypergraphs}

Motivated by this analogy, we determine
the sparsity scale at which
the spectral gap of \Erdos-\Renyi
hypergraphs diverges.
The upper bound 
\[
    \normin{T}-\normin{T-\E T}\le \normin{\E T}\le pn^{r/2}
\] 
shows
that $p\gg n^{-r/2}$ is a necessary condition
for the spectral gap to diverge.
Our first main result shows that, for $r\ge 3$, this condition is
also sufficient.

\begin{theorem}[Spectral gap of \Erdos-\Renyi hypergraphs; consequence of Theorem~\ref{thm:mainTensorProof}]
    \label{thm:mainTensor}
    Let $r\ge 3$ be fixed, and let $p=p(n)$ be such that $pn^{r/2}\to\infty$ as $n\to\infty$. Then,
    with high probability over $T\sim \calH_r(n,p)$,
    \[
        \normin{T} - \normin{T - \E T} = (1-o(1)) pn^{r/2}\qquad \text{as $n\to\infty$}\,.
    \]
\end{theorem}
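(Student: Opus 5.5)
The reduction is elementary once the key estimate is in place: everything hinges on showing
\[
\normin{T-\E T} = o\bigl(pn^{r/2}\bigr)\quad\text{with high probability whenever } pn^{r/2}\to\infty,
\]
which is the content of Theorem~\ref{thm:mainTensorProof}. Granting this, the triangle inequality gives $\normin{T}\le \normin{\E T}+\normin{T-\E T}$ and $\normin{T}\ge \normin{\E T}-\normin{T-\E T}$. Hence $\normin{T}-\normin{T-\E T}$ lies between $\normin{\E T}-2\normin{T-\E T}$ and $\normin{\E T}$. It therefore suffices to show $\normin{\E T}=(1-o(1))pn^{r/2}$. For this, test the all-ones direction $x=n^{-1/2}\mathbf 1$. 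The entries of $\E T$ equal $p$ on tuples with distinct indices and $0$ otherwise, so $\Iprod{\E T,x^{\otimes r}} = p\, n(n-1)\cdots(n-r+1)\, n^{-r/2}=(1-O(1/n))pn^{r/2}$. The matching upper bound $pn^{r/2}$ follows from Cauchy--Schwarz, since the Frobenius norm of $\E T$ is at most $pn^{r/2}$.

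The real work is the bound on $\normin{T-\E T}$. My plan is a chaining argument over the rank-one set $\mathcal X$, split by scale in the spirit of the abstract. I would first discretize each $x_j$ to a net and decompose it into dyadic level sets of coordinate magnitudes. A rank-one tensor then becomes a sum over level profiles $(\ell_1,\dots,\ell_r)$ of blocks supported on products of sets $S_1\times\cdots\times S_r$.

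The blocks are handled in two regimes:
\begin{itemize}
\item \emph{Light pairs} (small $\prod_j |x_{j,i_j}|$). The entries of the centered process are bounded, so Bernstein's inequality gives subgaussian-plus-subexponential tails. A union bound over the net costs $e^{O(n)}$, which is affordable at variance level $p$ times the squared coefficient.
\item \emph{Heavy pairs} (large coefficients). Here Bernstein's inequality fails because of sparsity. Instead I would bound the positive part combinatorially, by controlling the number of hyperedges inside $S_1\times\cdots\times S_r$ uniformly over all set sizes. This is a Chernoff/discrepancy estimate: with high probability every product of sets has at most $\max(C p\prod|S_j|,\ C\,\text{(size)}\log(\cdot))$ hyperedges. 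The centering term $p\sum$ is handled trivially.
\end{itemize}

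I expect the main obstacle to be removing the logarithmic factors at the threshold $p\approx n^{-r/2}$. A naive union bound over level profiles, or over discretization scales, loses $\log n$. The remedy I would try first is an explicit selector-process decomposition in the style of Talagrand. Each index tuple is assigned to exactly one regime according to a threshold depending on $\prod_j |x_{j,i_j}|$ relative to $\sqrt{p}$. The discrepancy counting is then done with weights that sum geometrically across dyadic scales, instead of paying a union bound per scale.

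For $r\ge3$ there is extra room. Heavy blocks need at least one small set $S_j$, because products of large sets carry few hyperedges when $p\ll n^{-(r-1)}$-type densities appear. So the heavy contributions can be summed as a convergent geometric series yielding $O(\sqrt{p n^{r}}\cdot n^{-(r-2)/4})+O(\sqrt{pn^{r/2}})$-type terms. Both are $o(pn^{r/2})$ when $pn^{r/2}\to\infty$. I would verify carefully that this gain, which is absent for $r=2$, is exactly what makes the threshold sharp without logs.
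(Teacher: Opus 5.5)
\textbf{Verdict: genuine gap.} Your reduction in the first paragraph is correct. Using $\normin{T}\ge\normin{\E T}-\normin{T-\E T}$ together with the exact computation of $\normin{\E T}$ is slightly cleaner than the paper's route, which lower-bounds $\normin{T}$ by the all-ones test vector and applies Bernstein to $\sum_i T_i$.

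The gap is the step you call ``the content of Theorem~\ref{thm:mainTensorProof}.'' That theorem does not say $\normin{T-\E T}=o(pn^{r/2})$. Deriving this is exactly the paper's proof, and you never carry it out. Two things are needed:
\begin{enumerate}
\item Split the symmetric adjacency tensor into $r!$ pieces according to the ordering of the indices, so that each piece has independent entries.
\item Fix $\eps$ with $1+\eps<r/2$ (e.g.\ $\eps=0.01$) and set $L=\max(1,pn^{1+\eps})$. Then $\Pr(T_i\neq 0)\le Ln^{-1-\eps}$, and \eqref{eq:rAssumption} holds for large $n$.
\end{enumerate}
The tail bound of Theorem~\ref{thm:mainTensorProof} then gives $\normin{T-\E T}\lesssim_r \max(1,pn^{1+\eps})$ with high probability. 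This is $o(pn^{r/2})$ because $pn^{r/2}\to\infty$ and $n^{1+\eps}=o(n^{r/2})$. That comparison is the only place where $r\ge 3$ is used, not a ``gain'' hidden inside a chaining argument.

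Instead, you sketch your own proof of the key estimate, and its stated outcome is wrong. Your first term is $\sqrt{pn^r}\,n^{-(r-2)/4}=\sqrt{n\cdot pn^{r/2}}$. This is $o(pn^{r/2})$ only when $pn^{r/2}\gg n$. So the sketch fails throughout the window $n^{-r/2}\ll p\ll n^{1-r/2}$, which is exactly the regime the theorem is about. Relatedly, $p\ll n^{-(r-1)}$ is not the relevant density, since $n^{-r/2}\gg n^{-(r-1)}$ for $r\ge 3$.

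The missing idea is that near the threshold you need a dimension-free estimate. For $p\le n^{-1-\eps}$ one has $\normin{T-\E T}=O_{r,\eps}(1)$, i.e.\ the norm is of the order of the largest entry, not of a variance-type quantity such as $\sqrt{pn^r}$. The paper obtains this in two parts:
\begin{itemize}
\item Test tensors are clipped at the $p$-independent level $\tau=1/(n\log n)$, and the clipped part is controlled by Bernstein plus a net of size $2^{O(r^2 n\log n)}$.
\item The remainder is dominated by the witness set of Definition~\ref{def:witness}. Its normalization $\phi(S)=\sqrt{|S|}/\max(4r^2,\log^2|S|)$ makes the sum over dyadic level profiles converge without logarithmic loss.
\end{itemize}
Your sketch, with a threshold ``relative to $\sqrt p$'' and a generic discrepancy bound, does not say how to reach this $O(1)$ bound.
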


In the graph case $r=2$, the matrix Bernstein inequality
shows that $p\gg \log n/n$ is sufficient for
the spectral gap to diverge.
This is also the regime of $p$
studied in the original paper of Friedman
and Wigderson on random tensors~\cite{friedmanWigderson}.
Theorem~\ref{thm:mainTensor} shows that, unlike in graphs, this regime
lies far beyond the point $p \gg n^{-r/2}$ at which a spectral gap first emerges.

Theorem~\ref{thm:mainTensor} overcomes
a barrier present in several prior works. Using tensor flattening,
Zhou and Zhu~\cite[Theorem 2.3]{sparseTensorZhou} obtain an upper bound on the
second eigenvalue throughout the sparsity regime of
Theorem~\ref{thm:mainTensor}. 
However, their bound always carries at least a
polylogarithmic factor in $n$, and therefore cannot establish the phase
transition. As we explain in Section~\ref{sec:overview}, this loss is
inherent to the flattening approach.
Similar bounds arise in the analyses of
algorithms for tensor completion~\cite[Theorem 2.1]{tensorCompletionJain} and multilayer community
detection~\cite[Theorem 2]{multiLayerSBM},
but they likewise incur extra
$\polylog n$ factors. Section~\ref{sec:applis} discusses consequences of our techniques for these problems.

\subsection{Motivation: refutation of random constraint satisfaction problems}
\label{sec:refutation}

Theorem~\ref{thm:mainTensor} yields a certificate that upper bounds the size of the maximum
independent set in a random hypergraph. 
For a hypergraph, a subset of vertices $S$ is an independent set if no
hyperedge is fully contained in $S$. If $S$ is an independent set of the hypergraph represented by $T \sim \calH_r(n,p)$, the
indicator vector $\bs 1_S$ satisfies
\[
    \frac{\Abs{\Iprod{ T - \E  T,  \mathbf 1_S^{\otimes r}}}} {\norm{\mathbf 1_S^{\otimes r}}_2} = \frac{p|S|(|S|-1)\ldots(|S|-r+1)} {|S|^{r/2}}\,.
\]
Hence, when $p \gg n^{-r/2}$, Theorem~\ref{thm:mainTensor} implies that such random hypergraphs typically do not contain
linear-sized independent sets, and this is
certified by the second eigenvalue of
$ T$.

Together with known reductions from the problem of
refuting random constraint satisfaction
problems to that of certifying the absence of linear-sized independent sets in random
hypergraphs~\cite{goerdtKrivilevich}, this suggests that the second eigenvalue could provide an efficient
certificate of unsatisfiability for random $r$-SAT formulas
with $O(n^{r/2})$ constraints.
For even $r$,
the existence of such certificates
already follows from a flattening
argument and Grothendieck's inequality~\cite{randomRefute}.
However, for odd $r$, such certificates were not known until the
recent work~\cite{lucaTom}, which uses a very different spectral certificate based on the
non-backtracking walk matrix and has a significantly more
involved analysis.

Our work is motivated by the question of whether the second eigenvalue of sparse random hypergraphs can be
computed efficiently. 
While there is strong evidence that, for dense random tensors ($p=1/2$), the second eigenvalue can only be approximated
within an $\Omega(n^{1/4})$ factor~\cite{tensorPCASOS,subexpLBPCA}, it is unclear whether such lower bounds
extend to the sparse setting. In fact, a simple flattening argument shows that the integrality gap of
the basic SDP relaxation at the threshold $p = n^{-r/2}$ is at most $O(\log n)$. 

Our proof technique decomposes test vectors into several
components, with the aim of clarifying which components
might be approximated efficiently. This decomposition is guided by the
theory of selector processes, as we discuss further in
Section~\ref{sec:overview}. We leave the algorithmic aspects of this
decomposition to future work. More broadly, it suggests a possible route
towards rehabilitating the ``Chernoff + union bound'' approach for the design of
efficient approximation algorithms.

\subsection{Concentration of random tensors}

On our way to prove Theorem~\ref{thm:mainTensor}, we show the
following general result on the injective norm
of sparse random tensors:

\begin{theorem}[Concentration of sparse random tensors; see Corollary~\ref{corollary:unboundedsparsetensor}]\label{thm:tensorVersion}
    Let $T\in (\R^n)^{\otimes r}$ be a random tensor with
    independent entries such that
    for some $\eps\in (0,1)$,
    $\Pr\left(T_i\neq 0\right)\le n^{-1-\eps}$
    for all $i\in [n]^r$. Then,
    \[
        \E \normin{T - \E T} \lesssim \frac {r^5} {\eps^2} \log^2 \frac r \eps \cdot \E  \max_{i\in [n]^r} \abs{T_i}\,.
    \]
\end{theorem}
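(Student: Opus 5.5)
We reduce to the case of tensors with bounded entries and then bound a selector-type process by chaining over a carefully decomposed set of test vectors.

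\textbf{Reduction to bounded entries.} Write $M \defeq \E \max_{i} |T_i|$. First symmetrize: with an independent copy $T'$, we have $\E\normin{T-\E T}\le \E\normin{T-T'}$. The entries $T_i - T'_i$ are symmetric and satisfy $\Pr(T_i-T'_i\neq 0)\le 2n^{-1-\eps}$, so we may assume the entries are symmetric. Next we split each entry into dyadic levels $T_i\mathbf 1\{2^{k}M<|T_i|\le 2^{k+1}M\}$. The levels with $2^k\ge n^{r}$ are controlled crudely by $\E\sum_i |T_i|\mathbf 1\{|T_i|>n^rM\}$, which is small relative to $M$ by a union/tail argument. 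Since $\Pr(\max|T_i|> 2^kM)\le 2^{-k}$, the remaining levels form a geometric series, provided that for a tensor with entries in $[-1,1]$ (after rescaling) and with the sparse support we prove a bound of the form
\[
\E\normin{S}\lesssim C(r,\eps)\bigl(1+\text{a term capturing the probability that the support is nonempty}\bigr).
\]
Concretely, the target core statement is: if $S$ has independent symmetric entries bounded by $1$ with $\Pr(S_i\neq 0)\le q\, n^{-1-\eps}$, then $\E\normin{S}\lesssim C(r,\eps)\,\max(1, \text{stuff})$. Summing over $k$ with the weights $2^{k}\cdot\Pr(\text{level nonempty})$ then yields the claim.

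\textbf{Core bound via decomposition of test vectors.} For bounded sparse entries we bound $\sup_{X\in\mathcal X}\langle S,X\rangle$ using a Talagrand-style decomposition of the selector process. Each unit vector $x_j$ is split into a ``heavy'' part (coordinates of magnitude $\ge n^{-\delta}$ for a suitable $\delta$ depending on $\eps,r$, of which there are few) and ``light'' parts obtained by dyadic level sets, giving $O(\log(r/\eps))$ pieces per factor. Expanding $\langle S, x_1\otimes\cdots\otimes x_r\rangle$ multilinearly produces polynomially many terms $\langle S, u_1\otimes\cdots\otimes u_r\rangle$ in which each $u_j$ is a level-set vector. For each term we use one of two arguments. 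When the product of the level magnitudes is small, Bernstein's inequality combined with a union bound over an $\eps$-net of level-set vectors of the given support sizes is enough, because sparsity makes the variance $\lesssim n^{-1-\eps}\cdot\prod\|u_j\|^2$ beat the entropy. When the magnitudes are large, we use a ``positive part'' bound: we control $\sum_i |S_i|\,|u_{1,i_1}|\cdots|u_{r,i_r}|$ through the combinatorial fact that, with high probability, every small set of coordinates meets few nonzero entries of $S$. This is the hypergraph analogue of the statement that sparse random graphs have no dense small subgraphs, and it follows from a first-moment count that uses $p\le n^{-1-\eps}$. Summing over the $O((\log(r/\eps))^{2}\cdot \mathrm{poly}(r)/\eps^{2})$ relevant term types gives the stated dependence.

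\textbf{Main obstacle.} The hard step is the intermediate regime where neither argument alone works. The Bernstein plus net union bound loses a $\log n$ factor exactly where the flattening approach fails, so the decomposition has to be tuned so that the net entropy of each piece, which is roughly its support size times $\log n$, is always paid for by either variance or sparsity. I would first try thresholds at $n^{-k\eps/(2r)}$, for $k$ up to $O(r/\eps)$, and check that each pair of adjacent regimes is covered. This choice is where the $r^5/\eps^2$ factor would originate.
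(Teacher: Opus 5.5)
Your reduction and the Bernstein/positive split follow the same philosophy as the paper. The gap is the core bound for bounded sparse entries, which the proposal does not prove: you leave the ``intermediate regime'' open, and the plan you sketch would not close it.

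Expanding each $x_j$ into level pieces and bounding the terms $\langle S,u_1\otimes\cdots\otimes u_r\rangle$ one at a time cannot give a dimension-free bound. With dyadic levels there are $\Theta(\log n)$ pieces per factor, hence $\Theta(\log n)^r$ terms. With your coarse thresholds $n^{-k\eps/(2r)}$, dominating a piece by its indicator envelope loses up to $n^{\eps/(2r)}$ per factor. For example, suppose each $u_j$ is flat with value $\approx n^{-\eps/(2r)}$ on a set $A_j$ of size $a\approx n^{\eps/r}$, and the $A_j$ carry a matching of $a$ nonzero entries of $S$. Then the envelopes give $\approx a$, while the true value is $a^{1-r/2}\le 1$. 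Either way the number of term types is exponential in $r$, not the $O(\mathrm{poly}(r)\log^2(r/\eps)/\eps^2)$ you assert.

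The missing idea is the paper's construction, in two steps:
\begin{itemize}
\item Clip once at $\tau=1/(n\log n)$ and control the whole clipped set with a single Bernstein-plus-net argument. Here it is the $L^\infty$ term $t/\tau$ of Bernstein, not the variance, that must beat the $2^{O(r^2 n\log n)}$ net entropy, and this is what fixes $\tau$.
\item Dominate the entire positive part entrywise by an element of the solid convex hull of a witness set of indicator tensors normalized by $\phi(A)=\sqrt{|A|}/\max(4r^2,\log^2|A|)$. This normalization makes the total coefficient over all $r$-adic level combinations at most $1/2$ (Lemma~\ref{lemma:bSum}). A Chernoff union bound over the witnesses then suffices, with no summation over terms.
\end{itemize}
You also need the unbalanced witnesses with $\mathbf 1_{[n]}$ in one slot (Lemma~\ref{lemma:uSum}) for level combinations with $\frac1r\sum_i|A_i|>\prod_i\phi(A_i)$. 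Your ``small sets meet few nonzero entries'' fact only covers the balanced case.

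The factor $\frac{r^5}{\eps^2}\log^2\frac r\eps$ also does not come from the decomposition. The union bound works only when $r\le \eps\log n/(5\log\log n)$, for instance so that $n^\eps$ dominates $r^r\log^{2r+1}n$ in the Chernoff step, and there it gives $r^4/\eps$. For larger $r$ the paper invokes Boedihardjo's bound (Theorem~\ref{thm:march}). Its error term $r^3\log^2 n$ is $\lesssim\frac{r^5}{\eps^2}\log^2\frac r\eps$ because in that regime $\log n\lesssim\frac r\eps\log\frac r\eps$. Your proposal has no argument for large $r$.

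Finally, the reduction is salvageable but more complicated than needed, and its justification is off. Markov's bound $\Pr(\max_i|T_i|>2^kM)\le 2^{-k}$ gives a contribution of order $M$ per level, not a geometric series. To make it work you would need tail integration, and a core bound proportional to the probability that the level is nonempty rather than $\max(1,\cdot)$. It suffices to truncate once at level $2\E\max_i|T_i|$ (call it $M$). The large part obeys $\E\normin{T^{>M}-\E T^{>M}}\le 2\E\|T^{>M}\|_1\lesssim M$ by Lemma~\ref{lemma:reversetrunc}, so the bounded-entry theorem is needed only for $T^{\le M}/M$.
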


Thus, in the sparse regime where each
entry is nonzero with probability $\ll 1/n$, the
injective norm is controlled, up to a polynomial factor in $r$, by the largest entry of the tensor.
Previous bounds for sparse random tensors, such as~\cite{sparseTensorZhou}, not only depend on $n$, but also depend exponentially on $r$. Improving this dependence was explicitly posed
there as a direction for future work. 
We do not expect the particular
polynomial dependence on $r$ and $\eps$ in Theorem~\ref{thm:tensorVersion} to be optimal.
We note that even for tensors with i.i.d.\ Gaussian entries, the precise asymptotic constant in the injective norm was determined only recently~\cite{2024DartoisMcKenna,bates2025balanced}.

Theorem~\ref{thm:mainTensor} 
is a special case of Theorem~\ref{thm:tensorVersion}, where $T$ has
i.i.d.\ entries up to the symmetry. A landmark result on the
norm of general random matrices with
i.i.d.\ entries is Seginer's theorem:

\begin{theorem}[{\cite[Corollary 2.2]{seginer}}]\label{thm:seginerMatrix}
    Let $A\in \R^{n\times m}$ 
    be a random matrix with
    i.i.d.\ mean-zero entries. Then,
    \[
        \E \normin{A} \asymp \E \max_{i\in [n]} \left(\sum_{j=1}^m A_{ij}^2\right)^{1/2} + \E \max_{j\in [m]} \left(\sum_{i=1}^n A_{ij}^2\right)^{1/2}\,.
    \]
\end{theorem}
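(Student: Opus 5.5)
The lower bound is elementary, so all the work is in the upper bound. For the lower bound, for every row $i$ the unit vector $y=A_{i\cdot}/\|A_{i\cdot}\|_2$ gives $\normin{A}\ge \langle e_i, Ay\rangle = (\sum_j A_{ij}^2)^{1/2}$. Hence $\normin{A}$ dominates the maximal row norm, and likewise the maximal column norm. Taking expectations and using $\max(a,b)\ge (a+b)/2$ gives the $\gtrsim$ direction, without using independence or identical distribution.

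For the upper bound, I would first reduce to symmetric entries. Let $A'$ be an independent copy of $A$. By Jensen and $\E A'=0$ we have $\E\normin{A}\le \E\normin{A-A'}$. The matrix $A-A'$ has i.i.d.\ symmetric entries, and its maximal row and column norms are at most twice those of $A$ in expectation (triangle inequality). So we may assume $A_{ij}=\eps_{ij}|A_{ij}|$ with independent Rademacher signs $\eps_{ij}$, independent of the magnitudes. I would then work conditionally on the magnitudes and use the moment method. For $q\approx \log(n+m)$,
\[
\E_\eps \normin{A}^{2q}\le \E_\eps \operatorname{tr}\big((AA^\top)^q\big)=\sum_{\text{closed bipartite walks}} \prod_{\text{edges}} |A_{ij}|^{\text{mult}}\,,
\]
where only walks traversing each edge an even number of times survive. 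Each surviving walk has a shape: a connected graph with at most $q$ distinct edges, together with the order in which they are traversed. The number of shapes is at most $C^q$ times Catalan-type factors.

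The key step, where the i.i.d.\ assumption enters, is to average over the magnitudes as well. Because the entries are i.i.d., the expected contribution of a walk depends only on its shape: it is a product of moments $\E|A_{11}|^{2k}$, one for each distinct edge. I would then compare this with the moments of the maximal row norm. Following Seginer, the aim is to show that the total contribution of walks of a given shape is bounded by $C^q$ times
\[
(n+m)\,\E\Big[\max_i \Big(\sum_j A_{ij}^2\Big)^{q}+\max_j\Big(\sum_i A_{ij}^2\Big)^{q}\Big].
\]
The idea is that every walk explores a tree-like structure, and at each new vertex the sum over its possible labels is a row or column sum of squares, which is dominated by the maximum. Taking $2q$-th roots removes the $(n+m)^{1/2q}$ factor, since it is $O(1)$ for this choice of $q$. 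We also need to pass from $(\E\max(\cdot)^q)^{1/2q}$ back to $\E\max(\cdot)^{1/2}$; for this I would use a truncation of the largest entries, treated separately by the trivial bound $\normin{\cdot}\le$ Frobenius norm on a sparse part, together with concentration of the maximum of sums of i.i.d.\ terms.

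I expect the main obstacle to be the combinatorial comparison step. A naive bound replaces each row sum by its maximum independently along the walk, and this loses either a $\log n$ factor or a factor that depends on heavy tails. Revisiting a vertex means the same row sum is used again, so the estimate has to be organised by shapes in which repeated vertices are charged correctly. My first attempt would be to split the entries dyadically by magnitude. Within each level the matrix is a sparse sign matrix with i.i.d.\ support, so row sums are concentrated degree counts. I would handle each level by the moment bound above, and recombine the levels by a Hölder inequality in the spirit of Seginer's exchangeability argument: randomly permuting within rows preserves the law, which lets the maximum row sum control all walk weights simultaneously.
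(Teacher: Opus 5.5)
The paper does not prove this statement. It is quoted from Seginer~\cite{seginer} as background, so your proposal has to stand on its own. Your lower bound is correct and complete. Your preliminary reductions for the upper bound are also sound: symmetrization, and removal of the entries exceeding $M=2\E\max_{i,j}|A_{ij}|$, whose entrywise $\ell_1$-norm has expectation $O(M)$ by Lemma~\ref{lemma:reversetrunc}. Passing from $(\E\max_i\|A_{i\cdot}\|_2^{2q})^{1/(2q)}$ back to $\E\max_i\|A_{i\cdot}\|_2$ for the truncated matrix can be done with a Hoffmann-J{\o}rgensen-type quantile argument. It cannot be done with generic Lipschitz concentration, which costs an additive $M\sqrt{\log n}$. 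The genuine gap is the upper bound itself. The inequality you state as ``the aim'' bounds $\E\operatorname{tr}((AA^\top)^q)$ by $C^q(n+m)\,\E[\max_i(\sum_jA_{ij}^2)^q+\max_j(\sum_iA_{ij}^2)^q]$. That inequality is the entire content of Seginer's theorem, and you leave it unproved.

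The ingredients you sketch would also not prove it. First, the number of shapes is not $C^q$ times a Catalan number once edges may be traversed more than twice. The closed walks $i\,j_1\,i\,j_2\cdots i\,j_q\,i$, whose row index never changes, already give one shape per set partition of $\{1,\ldots,q\}$. That is a Bell number $B_q=q^{(1-o(1))q}$ of shapes. Bounding shape by shape and summing therefore costs $B_q^{1/(2q)}$, of order $\sqrt{q/\log q}$. For $q\asymp\log(n+m)$ this is an unbounded factor $\sqrt{\log(n+m)/\log\log(n+m)}$, which the theorem excludes. For these star walks the right bookkeeping is aggregate: their total is exactly $\sum_i\E(\sum_jA_{ij}^2)^q$, with the partitions reabsorbed by the multinomial expansion. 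The missing idea is how to aggregate in this way for general walks that alternate between many rows and columns and revisit vertices.

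Second, as you note, dominating each new label sum by the maximal row sum loses. Finer counting cannot fix this. Any bound that, conditionally on the magnitudes, uses only the maximal row and column norms of the magnitude matrix is false in general. For fixed magnitudes with independent random signs, Seginer shows that a loss of order $\log^{1/4}\min(n,m)$ is unavoidable. So the i.i.d.\ hypothesis has to enter the combinatorial estimate itself.

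Third, the dyadic fallback does not help as stated. Write $A=\sum_k A_k$ by magnitude levels and bound the mixed-level terms by H\"older, $|\operatorname{tr}(X_1X_2^\top\cdots X_{2q-1}X_{2q}^\top)|\le\prod_t\|X_t\|_{S_{2q}}$, where $\|\cdot\|_{S_{2q}}$ is the Schatten $2q$-norm. This just reproduces the triangle inequality $\|A\|_{S_{2q}}\le\sum_k\|A_k\|_{S_{2q}}$, which loses a factor $\sqrt L$ for $L$ comparable levels. For example, take $Z=\pm2^k$ with probability proportional to $4^{-k}$, $0\le k\le L$, $4^L\approx n/\log n$. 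Each level has maximal row norm of order $\sqrt n$, while the whole matrix has maximal row norm of order $\sqrt{nL}$.

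So the proposal locates the difficulty correctly but does not contain the argument that overcomes it.
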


\noindent For example, when 
applied to the centered adjacency matrix of \Erdos-\Renyi
random graphs, 
Theorem~\ref{thm:seginerMatrix} implies that the expected second eigenvalue is within a constant factor of the expected square root of the maximum degree.

Remarkably, Theorem~\ref{thm:seginerMatrix}
does not require any assumption
on the distribution of the entries. Recently,~\cite{latala2025operator} established a generalization of Theorem~\ref{thm:seginerMatrix} to the $\ell_p \to \ell_q$ operator norm of matrices, under a moment equivalence assumption on the distribution of the entries.
Building on Theorem~\ref{thm:tensorVersion}, we derive a generalization of
Theorem~\ref{thm:seginerMatrix}
for the injective norm of tensors, under the following moment equivalence assumption:

\begin{assumption}\label{ass:momentGrowthAssumption}
    Let $Z$ be a mean-zero random variable such that, for some $\eps\in (0,1)$ and $K\ge 1$,
    \[
        \E |Z|^{2+\eps}\le K \sigma^{2+\eps}\,,\qquad \textnormal{where }\sigma^2 \defeq \E Z^2\,.
    \]
\end{assumption}
\noindent When the entries of $A$ are i.i.d.\ copies of a random variable satisfying Assumption~\ref{ass:momentGrowthAssumption}, the maximum row- and column-norm parameter
on the right-hand side of
Theorem~\ref{thm:seginerMatrix}
simplifies to\footnote{Throughout, we use the notation $a\asymp_{\eps,K} b$ to mean that $c b\le a\le C b$ for some constants $c=c(\eps,K)$ and $C=C(\eps,K)>0$ depending only
on $\eps$ and $K$. 
This dependence is necessary in general (see Remark~\ref{rem:tightnessSeginer}).}
\[
    \E \max_{i\in [n]} \left(\sum_{j=1}^m A_{ij}^2\right)^{\frac12} + \E \max_{j\in [m]} \left(\sum_{i=1}^n A_{ij}^2\right)^{\frac12}  \underset{\eps,K}\asymp \sigma \sqrt {\max(n,m)} +  \E \max_{\substack{i\in [n]\\j\in [m]}} |A_{ij}|\,.
\]
(See the $r=2$ case of Proposition~\ref{prop:proofSeginerEquivalence} for a self-contained proof of this equivalence.) 

Our next result is a
tensor analogue of this 
consequence of Seginer's theorem.

\begin{theorem}[Seginer-type theorem for tensors; see Theorem~\ref{thm:seginer}]
    \label{thm:seginerIntro}
    Let $ T\in \R^{n_1\times \ldots \times n_r}$
    be a random tensor whose entries
    are i.i.d.\ copies of a random variable satisfying Assumption~\ref{ass:momentGrowthAssumption}. Then,
    \[
        \E \normin{T}\underset{\eps,K,r}\asymp \sigma \cdot \max_{i\in [r]} \sqrt{n_i} + \E \max_{\substack{i_1\in [n_1]\\\ldots\\i_r\in [n_r]}} |T_{i_1,\ldots,i_r}|\,.
    \]
\end{theorem}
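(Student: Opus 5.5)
The lower bound is elementary. For each mode $k$, I would fix all indices except the $k$-th and choose the remaining unit vectors as standard basis vectors. Then $\normin{T}$ is at least the $\ell_2$ norm of a fibre of $n_k$ i.i.d.\ entries, so $\E\normin{T}\ge \E(\sum_{j} Z_j^2)^{1/2}$. Paley--Zygmund applied to $\sum_j Z_j^2$, using Assumption~\ref{ass:momentGrowthAssumption}, gives $\E(\sum_j Z_j^2)^{1/2}\gtrsim_{\eps,K}\sigma\sqrt{n_k}$. Here the $2+\eps$ moment bound is what excludes mass escaping to tiny probabilities; a truncated second moment argument or a Rosenthal-type estimate makes this precise. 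Choosing all $x_k$ to be basis vectors gives $\normin{T}\ge\max|T_{i}|$. Averaging the two bounds yields the lower half.

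For the upper bound, I would normalize $\sigma=1$ and set $N=\max_k n_k$. I would split each entry at a threshold $\tau\asymp N^{(1+\eps')/2}$, up to constants depending on $r$, and define $T=T^{\le}+T^{>}$ with $T^{\le}_i=Z_i\mathbf 1_{|Z_i|\le \tau}$. Both pieces are re-centered, and the mean correction is dealt with separately. By Markov and the $2+\eps$ moment, $\Pr(|Z|>\tau)\le K\tau^{-2-\eps}\le N^{-1-\eps''}$ for a suitable $\eps''=\eps''(\eps)>0$. So $T^{>}$ is a sparse tensor with independent entries, and Theorem~\ref{thm:tensorVersion}, adapted to rectangular shapes by padding to an $N^{\otimes r}$ tensor, gives $\E\normin{T^{>}-\E T^{>}}\lesssim_{r,\eps}\E\max|T^{>}_i|\le \E\max|T_i|$. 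The mean $\E T^{>}$ is a constant tensor with value $|\E Z\mathbf 1_{|Z|>\tau}|\le K\tau^{-1-\eps}$. Its injective norm is at most that value times $\prod_k\sqrt{n_k}\le N^{r/2}$. This is $O(\sqrt N)$ only if $\tau$ is large enough, which is where the threshold has to be tuned.

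The bounded part $T^{\le}$ is where the main obstacle lies. Its entries are bounded by $\tau$ and have variance at most $1$. A standard $\eps$-net and Bernstein bound gives $\sqrt{N}+\tau$ only up to constants when $\tau\lesssim \sqrt N$ per mode; for larger $\tau$ the net argument picks up logarithmic losses. I would therefore split the bounded part dyadically into layers $2^{j}\sqrt N<|Z|\le 2^{j+1}\sqrt N$. Each layer is sparse, with probability $\le K(2^j\sqrt N)^{-2-\eps}$, and the moment assumption makes these probabilities summable. Each layer with $j\ge1$ is treated as a sparse tensor with entries bounded by $2^{j+1}\sqrt N$ and density $\ll 1/N$, so Theorem~\ref{thm:tensorVersion} controls it by its maximal entry. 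The difficulty is that the maximal entry of a layer can be $2^{j}\sqrt N$ even when it is typically empty. I would instead use the Bernstein-type bound from the body of the paper (Section~\ref{sec:intro}'s later Bernstein part), which gives a mixture of variance term and max term. The variance contributions $\sqrt{N\cdot N^{r-1}\cdot p_j}\cdot 2^j\sqrt N$ should decay geometrically in $j$ because of the $2+\eps$ moment, which produces a constant depending on $\eps,K,r$. The bottom layer $|Z|\le 2\sqrt N$ is handled by the usual net argument with Bernstein's inequality, giving $O_r(\sqrt N)$. Summing the layers gives the upper bound $\lesssim_{\eps,K,r}\sqrt N+\E\max|T_i|$. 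The step I expect to require the most care is making the layer sums converge without logarithmic loss, which is exactly what the explicit decomposition of the paper is designed for.
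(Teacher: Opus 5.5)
There is a genuine gap in the upper bound. Your lower bound is essentially fine. Paley--Zygmund on $\sum_j Z_j^2$ would need $\E Z^4$, which may be infinite, but for a fibre $x\in\R^{n_k}$ the bound $\E\|x\|_2\ge \E\|x\|_1/\sqrt{n_k}$, together with $\E|Z|\ge K^{-1/\eps}\sigma$ from Lemma~\ref{lemma:paleyzygmund}, closes it. Your overall architecture also matches the paper's: split at a threshold, and treat the sparse upper part with Corollary~\ref{corollary:unboundedsparsetensor}.

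\textbf{The gap: the dense bounded part.} Your claim that a net argument with Bernstein's inequality bounds the layer $|Z|\le 2\sqrt N$ by $O_r(\sqrt N)$ is false.
\begin{itemize}
\item Bernstein only sees the variance bound $1$ and the entry bound $2\sqrt N$. For a spiky test tensor such as $e_1\otimes\cdots\otimes e_1$, its subexponential term is $\exp(-ct/\sqrt N)$. A union bound over a net of cardinality $e^{\Theta(rN\log r)}$ containing such tensors therefore forces $t\gtrsim N^{3/2}$.
\item No argument using only these two parameters can work. For $r=2$, take i.i.d.\ entries equal to $\pm\sqrt N$ with probability $1/(2N)$ each and $0$ otherwise. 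They have variance $1$ and modulus at most $\sqrt N$. Yet the norm is at least the largest row norm, which is of order $\sqrt{N\log N/\log\log N}$ (compare Remark~\ref{rem:tightnessSeginer}).
\item So the moment assumption must enter the treatment of entries of size up to $\sqrt N$, and your plan does not say how.
\end{itemize}
The layers above $2\sqrt N$, which you flag as the hard step, are actually harmless. Corollary~\ref{corollary:unboundedsparsetensor} treats them in one piece, and the dyadic layer maxima sum to at most $4\Normi{T}$ pointwise anyway. Your proposed variance terms $\sqrt{N^r p_j}\,2^j\sqrt N\asymp \sqrt K\,2^{-j\eps/2}N^{(r-\eps/2)/2}$ are far larger than $\sqrt N$. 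Also, Lemma~\ref{lem:bernsteinPart} is not a standalone norm bound; it controls only clipped test tensors against entries of variance at most $Ln^{-1-\eps}$.

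\textbf{What is missing.} You need a dense-regime bound whose entry-size term loses only a polylogarithmic factor, used with a clipping threshold \emph{below} $\sqrt N$.
\begin{itemize}
\item The paper uses Boedihardjo's Theorem~\ref{thm:march}. It clips at $B=\min\bigl(\sqrt{n\E[\clip_M(Z)^2]}/\log^2 n,\,M\bigr)$ with $M=2\E\Normi{T}$; for the statement in terms of $\sigma$, $B\asymp\sigma\sqrt n/\log^2 n$ suffices.
\item This gives $\E\normin{\clip_B(T)-\E\clip_B(T)}\lesssim r^3\bigl(\sqrt{n\E[\clip_B(Z)^2]}+B\log^2 n\bigr)\lesssim r^3\sigma\sqrt n$.
\item The moment assumption is used only to show that the remainder stays sparse. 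By Markov and the $(2+\eps)$-moment bound (plus Lemma~\ref{lemma:equivalenceconstanttruncation} for the refined threshold), $\Pr(|Z|>B)\lesssim K\log^{4+2\eps}n\cdot n^{-1-\eps/2}\lesssim K\eps^{-4}n^{-1-\eps/4}$.
\item Corollary~\ref{corollary:unboundedsparsetensor} with $L\asymp K\eps^{-4}$ then bounds that part by $O_{r,\eps,K}(\E\Normi{T})$.
\end{itemize}

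\textbf{A smaller point.} Since $\E Z=0$, you have $\E T^{\le}=-\E T^{>}$, so recentering both pieces costs nothing. There is no reason to raise $\tau$ to control $\normin{\E T^{>}}$. Doing so would only enlarge the dense part you currently cannot control.
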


\noindent The dependence on $\eps,K,r$ 
in the upper bound is polynomial, whereas the dependence in the lower bound can be removed entirely by replacing $\sigma$ with a slightly different quantity; see Theorem~\ref{thm:seginer} for
a precise statement.

\paragraph*{}Our work leaves open the question of
a full analogue of Theorem~\ref{thm:seginerMatrix}
for tensors whose entries do not
satisfy Assumption~\ref{ass:momentGrowthAssumption}:

\begin{conjecture}\label{conj:seginerTensor}
    Let $T\in \R^{n_1\times \ldots \times n_r}$ be a random tensor
    with i.i.d.\ mean-zero entries.
    Then,
    \[
        \E \normin{T} \underset{r} \asymp \max_{k\in [r]} \E \max_{\substack{i_1\in [n_1]\\\ldots\\i_{r}\in [n_{r}]}} \left( \sum_{j=1}^{n_k} T^2_{i_1, \ldots, i_{k-1}, j, i_{k+1}, \ldots, i_r}\right)^{1/2}\,.
    \]
\end{conjecture}

We prove that Conjecture~\ref{conj:seginerTensor}
is equivalent to Theorem~\ref{thm:seginer}
under Assumption~\ref{ass:momentGrowthAssumption}
(for constant $K,\eps$ independent of $n$)
in Proposition~\ref{prop:proofSeginerEquivalence}.
We note that even the special case of Conjecture~\ref{conj:seginerTensor} where $n_1 = \ldots = n_r=n$ and the entries have
a centered Bernoulli distribution with parameter $1/n$ remains open
and would already be of independent
interest.

In a related direction, Boedihardjo~\cite{marchTensor} recently gave a tensor
generalization of a different
theorem of~\cite{seginer}, which
holds in the setting of independent (but not
necessarily identically distributed) entries; see
Theorem~\ref{thm:march} below. This remarkable result often yields sharp bounds on the norm of random tensors with dense entries, but it is not suited to the sparse setting because its error term involves $\polylog n$
factors. Such logarithmic factors are prohibitive for applications such
as Theorem~\ref{thm:mainTensor},
but they are unavoidable for the general independent entry model even in the matrix case, as shown in~\cite{seginer}. 
Nevertheless, our proof of Theorem~\ref{thm:seginerIntro} crucially relies on the combination of Theorem~\ref{thm:mainTensor} and the results in~\cite{marchTensor}, where Boedihardjo's bound covers the small entries of the tensor and our result controls the outliers.

\subsection{Proof idea: an explicit selector process decomposition}
\label{sec:overview}

The trace method is the classical approach to estimating the spectral norm of a random matrix, but it has no known analogue in the tensor setting.
One way to reduce the problem to the matrix case is to flatten the tensor.
For instance, given $T\in (\R^n)^{\otimes 4}$, one
may bound its injective norm by that of the $n^2\times n^2$ matrix whose entry in position $((i,j),(k,\ell))$ is $T_{ijk\ell}$. However, this reduction is not sufficient for Theorem~\ref{thm:mainTensor},
since the resulting matrix contains a
row with $\omega(1)$ nonzero entries
when
$p=\Omega(n^{-2})$.
Moreover, other common tools that have been used recently to study the injective norm of random tensors~\cite{aden2025injective,2024DartoisMcKenna,dartois2026moment,marchTensor,tensorConcentration,al2025sharp} are tailored to the Gaussian and sub-Gaussian setting. 
The sparsity regime we consider lies outside of the Gaussian universality class, so these approaches cannot be applied without major modifications.

Among the standard approaches to the matrix problem, only the union bound argument
of Kahn and \Szemeredi~\cite{kahnSzemeredi}
appears to generalize. 
The method discretizes the unit sphere by restricting to
vectors with signed dyadic entries, which changes the supremum by at
most a universal constant factor.
The main step is then a technical case analysis establishing the desired union bound for this discretization. 
Although the
approach uses only first principles, its implementation typically
requires a complex and lengthy case analysis. This is already apparent in the
matrix setting and becomes particularly cumbersome for tensors~\cite{kahnSzemeredi,friedmanWigderson,feigeOfek,tensorCompletionJain,
feigeOfekBoosted,sparseTensorZhou}.
This motivates the search for a more systematic and conceptual way to design such arguments.

\paragraph*{}
We instead use the theory of selector processes~\cite[Section
11.11]{talagrandChaining} as a guide for constructing union bound arguments. Given
$\calX\subseteq \R^m$, let
\begin{align}
    \delta(\calX)
  \defeq \E \sup_{X\in \calX} \iprod{T - \E T, X}\,,\label{eq:selector}
\end{align}
where $(T_i)_{i\in [m]}$ are i.i.d.\
Bernoulli random variables. Stochastic processes such as $(\iprod{T - \E T, X})_{X\in \calX}$ are called {\em selector processes}.
Talagrand shows~\cite[Theorem 11.12.1]{talagrandChaining} that
the following strategy gives an upper bound on
$\delta(\calX)$ that is optimal up to universal constants:
\begin{enumerate}
    \item Find a
    decomposition $\calX \subseteq \calX_B+\calX_P$, where we refer to $\calX_B$ as the ``Bernstein part'' and $\calX_P$ as the ``positive part''.

    \item Bound $\delta(\calX_B)$ by a chaining argument
    based only on Bernstein's inequality.

    \item Bound the positive part $\delta(\calX_P)$ after discarding sign
    cancellations:
    \[
        \delta(\calX_P)\le \E \sup_{X\in \calX_P} \iprod{T, X}\,.\footnote{Assuming that $\calX_P=-\calX_P$, as will be the case in our application.}
    \]
     Moreover,~\cite[Research Problem 13.1.1]{talagrandChaining} further suggests the existence of a finite witness set $\calW\subseteq \R^m_{\ge 0}$
    such that
    \[
        \forall X\in \calX_P\,,\;\exists Y\in \operatorname{Conv}(\calW)
            \textnormal{ such that }
            X_i \le Y_i \textnormal{ for all } i\in [m]\,,
    \]
    (where $\operatorname{Conv}$ denotes
    the convex hull),
    and for which a direct union
    bound over $\calW$ yields a tight estimate:
    \[
        \delta(\calX_P) \asymp
        \inf \left\{
            t>0 :
            \int_t^\infty
            \sum_{W\in \calW}
            \Pr\left(\iprod{T, W}>s\right)
            \diff s
            \le t
        \right\}.
    \]
\end{enumerate}
Such a decomposition immediately gives an upper bound on $\delta(\calX)$. The nontrivial fact is that there
always exists a decomposition
providing a bound on $\delta(\calX)$ that is tight up to constant factors. We emphasize that, to the best of our knowledge, the additional assertion concerning the existence of a witness set remains a (far-reaching) conjecture (see~\cite{park2024conjecture} for recent progress on a related question).

Unfortunately, Talagrand's proof
of existence of such a decomposition does not explain how
to choose the decomposition in practice. This limitation is shared by much
of the chaining literature. For instance, it is known
abstractly that the supremum of a Gaussian process can always be bounded
optimally using a generic chaining construction.
Nevertheless, for most Gaussian
processes arising in random matrix theory applications it remains open to exhibit an explicit construction with this property~\cite{talagrandChaining,vanHandelChaining,leeChaining}. A partial exception is the recent
work~\cite{latalaRademacher}, which relies on the decomposition theorem for Bernoulli processes proved in~\cite{bernoulliConjecture} as a guide
for the analysis of the norm of Rademacher matrices.

In this paper, we implement this strategy explicitly for our selector process~\eqref{eq:selector}. We prove Theorems~\ref{thm:mainTensor} and~\ref{thm:tensorVersion}
by constructing a decomposition together with
a witness set for the positive part.
Our goal is to bound the expected supremum of the selector process over
\begin{equation}
    \mathcal X \defeq \{x_1\otimes \ldots \otimes x_r : \|x_1\|_2= \ldots= \|x_r\|_2 = 1\}\,.\label{eq:pureTensors}
\end{equation}

To define the Bernstein part, we clip every entry
of a test tensor $X\in \calX$
at the level $1/(n \log n)$. The remaining
contribution forms the positive part; see~\eqref{eq:decomposition} for the formal decomposition. This decomposition plays the role of the small-pair/large-pair
decomposition in the traditional Kahn-\Szemeredi argument.

The witness set consists of suitably normalized pure tensors whose factors are $\{0,1\}$-valued vectors; see Definition~\ref{def:witness}. 
The general idea of transferring bounds from $\{0,1\}$-valued
test vectors to arbitrary test vectors
also appears in the ad hoc discretization
arguments~\cite{kahnSzemeredi,feigeOfek}.
Such a lifting generally requires a carefully
chosen normalization of the indicator
vectors, because a naive reduction incurs
a loss depending on the largest $\ell_1$-norm
of a tensor slice; see~\cite[Section 4.2]{biluLinial} for the matrix case.
Identifying the appropriate normalization
is the main substantive step of our proof;
once this is in place, the rest of the proof
follows directly from the above strategy.

\subsection{Organization of the paper}

After the preliminaries in Section~\ref{sec:prelims}, we prove
the general sparse tensor result underlying Theorem~\ref{thm:mainTensor} (Theorem~\ref{thm:mainTensorProof}) in
Section~\ref{sec:mainProof}. In particular,
we discuss some applications of
this result in Section~\ref{sec:applis}.
We derive some extensions, including Theorem~\ref{thm:tensorVersion},
in Section~\ref{sec:extensions}.
In Section~\ref{sec:seginer}, we
prove our generalization of Seginer's theorem (Theorem~\ref{thm:seginerIntro}).
Appendix~\ref{sec:truncationinequalities} contains omitted lemmas and their proofs.
Appendix~\ref{sec:equivalenceSeginer}
shows the equivalence between Conjecture~\ref{conj:seginerTensor}
and Theorem~\ref{thm:seginerIntro} under
our moment equivalence assumption.

\subsection{Acknowledgments}

LP thanks Luca Trevisan for suggesting, several years ago, the problem of determining the spectral gap of \Erdos-\Renyi hypergraphs. LP's work on this
project was supported by the Swiss National Science Foundation, grant no.\ 10004947.

The authors used AI tools for literature research and to scan the manuscript for typos. All content was written and verified by the authors.

\section{Preliminaries}
\label{sec:prelims}

\paragraph{Asymptotic notation:} We write $a\lesssim b$ (resp. $a\gtrsim b$) to denote
$a\le Cb$ (resp. $a\ge Cb$) for some universal constant $C>0$. We write
$a\asymp b$ as a shortcut for
$a\lesssim b$ and $a\gtrsim b$. We
write $a\lesssim_{s} b$ when the
hidden constant $C$ is allowed to
depend on $s$.

\paragraph{Tensor notation:} For tensors $T,T'\in \R^{n_1\times \ldots n_r}$, write $T\le T'$ when $T_i\le T'_i$ for every index $i\in [n_1]\times\ldots\times[n_r]$. Denote by $|T|$ the tensor obtained
from $T$ by applying the absolute
value entrywise. For $p\in [1, \infty]$, 
write $\|T\|_p$ for the
$\ell_p$-norm of the vectorization of $T$, viewed as an element of
 $\R^N$ for $N=\prod_{i=1}^r n_i$.

\begin{definition}[Clip]
    Given $\tau\in \R_{\ge 0}$, 
    let $\clip_\tau:\R\to \R$ be
    defined by
    \[
        \clip_\tau(x) =\begin{cases}
            x & \text{ if $|x|\le\tau$,}\\ \tau & \text{ if $x>\tau$,}\\
            -\tau & \text{ if $x<-\tau$.}
        \end{cases}
    \]
    For a tensor $T\in \R^{n_1\times\ldots n_r}$, we extend
    the notation by letting
    $\clip_\tau(T)$ be the tensor
    obtained from $T$
    by applying $\clip_\tau$
    entrywise.
\end{definition}

\paragraph{Concentration inequalities:} We use only
two elementary concentration
inequalities: Bernstein's inequality
and the multiplicative Chernoff bound.

\begin{lemma}[Bernstein's inequality; see, e.g., {\cite[Lemma 4.5.6]{talagrandChaining}}]\label{lem:bernstein}
    Let $X_1, \ldots, X_N$ be
    independent mean-zero random variables
    such that $|X_i|\le M$ almost
    surely. Then for any $t\ge 0$,
    \[
        \Pr\left(\sum_{i=1}^N X_i >t\right)\le \exp\left(-\frac 14 \min\left(\frac {t^2}{\sum_{i=1}^N \E X_i^2}, \frac t {M}\right)\right)\,.
    \]
\end{lemma}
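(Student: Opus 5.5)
The statement to prove is Bernstein's inequality (Lemma~\ref{lem:bernstein}) for independent mean-zero variables bounded by $M$. I will use the standard Chernoff/exponential-moment method.

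\textbf{Step 1: bound each moment generating function.} Fix $\lambda\in(0,1/M]$. For $|x|\le M$ we have
\[
e^{\lambda x}\le 1+\lambda x+\lambda^2x^2\sum_{k\ge2}\frac{(\lambda M)^{k-2}}{k!}\le 1+\lambda x+\lambda^2x^2(e-2)\,.
\]
Set $\sigma^2\defeq\sum_i\E X_i^2$. Taking expectations, using $\E X_i=0$ and $1+u\le e^u$, we get $\E e^{\lambda X_i}\le \exp(\lambda^2\E X_i^2)$, since $e-2<1$.

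\textbf{Step 2: Markov's inequality.} By independence,
\[
\Pr\Big(\sum_i X_i>t\Big)\le \exp(-\lambda t+\lambda^2\sigma^2)
\]
for every $\lambda\in(0,1/M]$.

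\textbf{Step 3: optimize over $\lambda$, with two cases.}

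If $t/(2\sigma^2)\le 1/M$, choose $\lambda=t/(2\sigma^2)$. The exponent becomes $-t^2/(4\sigma^2)$, and this is at most $-\tfrac14\min(t^2/\sigma^2,t/M)$.

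Otherwise $t>2\sigma^2/M$, and we choose $\lambda=1/M$. Then $\lambda^2\sigma^2=\sigma^2/M^2<t/(2M)$, so the exponent is at most $-t/M+t/(2M)=-t/(2M)$. This is at most $-\tfrac14\cdot t/M$, which in turn is at most $-\tfrac14\min(t^2/\sigma^2,t/M)$.

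The degenerate case $\sigma=0$ is trivial, since then the sum is almost surely zero.

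\textbf{Main obstacle.} The only delicate point is the moment generating function bound in Step 1. It holds because the constant $e-2$ is below $1$, which leaves enough room for the factor $\tfrac14$ in the final bound.
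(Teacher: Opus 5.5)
Your proof is correct. The paper gives no proof of its own and only cites Talagrand's Lemma 4.5.6, where the argument is the same exponential-moment method: for $\lambda M\le 1$ one bounds $\E e^{\lambda X_i}\le \exp(\lambda^2\E X_i^2)$, then chooses $\lambda=\min\bigl(t/(2\sigma^2),1/M\bigr)$ in two cases. This gives the slightly stronger exponent $\min\bigl(t^2/(4\sigma^2),\,t/(2M)\bigr)$, and the stated bound follows from it; the edge cases $t=0$ and $M=0$ are trivial.
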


Let $\textnormal{Ber}(p)$ denote
the Bernoulli distribution with parameter $p\in (0,1)$. 

\begin{lemma}[Chernoff bound; see, e.g., {\cite[Theorem 2.3.1]{vershynin2026high}}]\label{lemma:chernoff}
    Let $X_1, \ldots, X_N$ be i.i.d.\ $\operatorname{Ber}(p)$ random variables. Then, for any  $t \ge e^2\cdot Np$, 
    \[
        \Pr\left(\sum_{i=1}^N X_i > t\right) \leq \exp \left(-\frac{t}{2}\log \left( \frac{t}{Np} \right) \right)\,.
    \]
\end{lemma}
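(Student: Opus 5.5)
The plan is the standard exponential-moment (Cramér--Chernoff) argument, followed by a short step that absorbs a lower-order term using the hypothesis $t\ge e^2 Np$. Write $S=\sum_{i=1}^N X_i$ and $\mu=Np$. If $p=0$ the statement is trivial, so assume $\mu>0$.

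\emph{Moment generating function.} For any $\lambda>0$, independence and $1+x\le e^x$ give
\[
\E e^{\lambda S}=\bigl(1+p(e^\lambda-1)\bigr)^N\le \exp\bigl(\mu(e^\lambda-1)\bigr).
\]
Markov's inequality applied to $e^{\lambda S}$ then yields
\[
\Pr(S>t)\le \exp\bigl(\mu(e^\lambda-1)-\lambda t\bigr).
\]

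\emph{Optimizing $\lambda$.} I choose $\lambda=\log(t/\mu)$, which is positive since $t\ge e^2\mu>\mu$. With this choice $e^\lambda=t/\mu$, so the exponent becomes
\[
t-\mu-t\log(t/\mu)\le -t\bigl(\log(t/\mu)-1\bigr),
\]
where the inequality simply discards the term $-\mu\le 0$.

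\emph{Absorbing the $-1$.} The hypothesis $t\ge e^2\mu$ means $L\defeq\log(t/\mu)\ge 2$. Hence $L-1\ge L/2$, which gives
\[
\Pr(S>t)\le\exp\Bigl(-\frac t2\log\frac t\mu\Bigr),
\]
as claimed.

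There is no real obstacle here. The only point requiring care is the final step: the threshold $e^2$ in the hypothesis is exactly what is needed to turn the natural bound $(e\mu/t)^t$ into the cleaner form with the factor $1/2$.
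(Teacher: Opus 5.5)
Your proof is correct and is the standard Cram\'er--Chernoff argument (moment generating function, $1+x\le e^x$, and $\lambda=\log(t/\mu)$), which is how the cited textbook bound $\Pr(S\ge t)\le e^{-\mu}(e\mu/t)^t$ is proved; the paper gives no proof of its own and only cites that reference. Your final step, using $\log(t/\mu)\ge 2$ to absorb the $-1$, is exactly what turns the textbook form into the stated one.
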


\paragraph{Solid convex hull:}
Given $\calX \subseteq (\R^n)^{\otimes r}$, we 
denote by $\operatorname{Conv}(\calX)$ the convex hull of $\calX$
and by
\[
    \solid(\calX) \defeq \{X\in (\R^n)^{\otimes r} : \exists Y \in \operatorname{Conv}(\calX) \text{ s.t. } X\le Y\}
\]
its solid convex hull~\cite{talagrandChaining}.

\section{Tensors with sparse independent entries}
\label{sec:mainProof}

In this section, we prove a general bound on the norm of sparse random tensors. Section~\ref{sec:applis} presents applications of the theorem to the settings introduced above. We then prove the theorem in Sections~\ref{sec:proofFirst} through~\ref{sec:proofLast}.

\begin{theorem}[Bound on the injective norm of sparse random tensors]\label{thm:mainTensorProof}
    Let $n\ge 5$, $r\ge 2$ and $\eps\in (0,1)$ such that
    \begin{equation}
        r\le \frac {\eps \log n}{5\log \log n}\,.\label{eq:rAssumption}
    \end{equation}
    Let $T\in (\R^n)^{\otimes r}$ be a random tensor with independent entries taking values in $[-1,1]$, and suppose that for some
    $L\geq 1$, we have $\Pr\left(T_i\neq 0\right)\le Ln^{-1-\eps}$ for all $i\in [n]^r$.
    Then,
    \[
        \E \normin{T - \E T} \lesssim \frac {Lr^4} \eps\,.
    \]
    Moreover, for any $t>0$,
    \[
        \Pr\left(\normin{T - \E T}\gtrsim \frac {Lr^3} \eps\left(r + t\right)\right)\le n^{-t}\,.
    \]
\end{theorem}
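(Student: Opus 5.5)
We follow the strategy of Section~\ref{sec:overview}. First we discretize. Replacing $\calX$ by tensors $x_1\otimes\cdots\otimes x_r$ whose factors have signed dyadic entries costs a factor $2^{O(r)}$ in the naive approach, which is too expensive. So we do not discretize the Bernstein part. We only discretize inside the positive part, where monotonicity allows rounding entries up to powers of $2$ at a cost of $2^r$ on a nonnegative quantity. That cost is absorbed later, since the union bound over witnesses gains $n^{-\eps\cdot(\cdot)}$ factors.

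Fix $X=x_1\otimes\cdots\otimes x_r\in\calX$ and $\tau=1/(n\log n)$. We decompose $X=X_B+X_P$ with $X_B=\clip_\tau(X)$ and $X_P=X-X_B$, as in~\eqref{eq:decomposition}.

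\textbf{Bernstein part.} We have $\|X_B\|_\infty\le\tau$ and $\|X_B\|_2\le1$, and the summands $(T_i-\E T_i)(X_B)_i$ have variance at most $Ln^{-1-\eps}(X_B)_i^2$. Lemma~\ref{lem:bernstein} then gives
\[
\Pr\bigl(\langle T-\E T,X_B\rangle>s\bigr)\le\exp\Bigl(-\tfrac14\min\bigl(s^2n^{1+\eps}/L,\; s n\log n\bigr)\Bigr).
\]
An $\eps$-net of $\calX$ has size $e^{O(rn\log(rn))}$. Either a chaining argument, or a net on each factor of mesh $1/r^2$ combined with the standard iterative correction, then yields $\sup\langle T-\E T,X_B\rangle\lesssim r\,(1+t)$ with probability $1-n^{-t}$. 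Note that the clip is not linear in the factors, so the net argument must be run on the clipped set. We handle this by using $1$-Lipschitzness of $\clip_\tau$ entrywise, so that $\|X_B-X'_B\|_2\le\|X-X'\|_2$. We then chain between net levels, where the increments are also bounded by $\tau$.

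\textbf{Positive part.} We bound $\langle \E T, |X_P|\rangle\le Ln^{-1-\eps}\|X_P\|_1$. Each nonzero entry of $X_P$ is at least $\tau$, so $\|X_P\|_1\le \|X\|_2^2/\tau= n\log n$. This term is $o(1)$. It remains to control $\sup_X\langle |T|,|X_P|\rangle$.

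For this, we construct the witness set $\calW$ of Definition~\ref{def:witness}. For each $k$, write $|x_k|\le\sum_{a\ge0}2^{-a/2}\mathbf 1_{S_{k,a}}$, where $S_{k,a}=\{i:|x_{k,i}|\in(2^{-(a+1)/2},2^{-a/2}]\}$ satisfies $|S_{k,a}|\le2^{a+1}$. Then $|X_P|$ is dominated by the sum over tuples $(a_1,\dots,a_r)$ with $\sum a_k\le\log_2(n\log n)$ of $2^{-\sum a_k/2}\mathbf 1_{S_{1,a_1}\times\cdots\times S_{r,a_r}}$. The key normalization, which we expect to be the main obstacle, is to write this as a convex combination of witnesses
\[
W=c_{a}\,\mathbf 1_{S_1\times\cdots\times S_r},\qquad |S_k|\le 2^{a_k+1}.
\]
The weights must be chosen so that they sum to $O(r^{O(1)})$ after accounting for $\sum_a 2^{a}|x|^2$-type constraints. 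We would attempt to use the Cauchy--Schwarz budget $\sum_a 2^{-a}|S_{k,a}|\le 2$, paired with a number-of-dyadic-levels factor $r\log n$ that must be avoided. To avoid it, we group tuples by the value of $\sum a_k$ and exploit that the box is only ``large'' when $\prod|S_k|\ge n\log n$.

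Finally, we run the union bound over $\calW$. There are at most $n^{\sum|S_k|}$ choices of boxes. The variable $\langle|T|,\mathbf 1_{\text{box}}\rangle$ is a sum of Bernoullis with mean at most $L\prod|S_k|\,n^{-1-\eps}$. Lemma~\ref{lemma:chernoff} bounds the probability that this sum exceeds $s\,2^{\sum a_k/2}$ by $\exp(-\tfrac{s2^{\sum a_k/2}}{2}\log(\cdot))$. The logarithm is at least $\eps\log n$ times a constant, using $\prod|S_k|\le n^{r}$-type counting and $2^{\sum a/2}\ge\sqrt{\prod|S_k|}$. Comparing with the entropy $\sum|S_k|\log n\le 2r\max|S_k|\log n$ requires $s\gtrsim Lr^{3}/\eps$ when $\max|S_k|\le 2^{\sum a_k/2}$. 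The opposite case, where one side is very unbalanced, is handled by truncating that factor, since $T$'s entries are in $[-1,1]$ and contribute at most the box's smaller dimensions. Assumption~\eqref{eq:rAssumption} ensures that the $(\log n)^{r}$ losses from clipping at $\tau$ are dominated by $n^{\eps/5}$. Summing over $t$ gives the stated tail bound, and integrating gives the expectation bound $Lr^4/\eps$.
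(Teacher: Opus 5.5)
\textbf{Gap: the witness normalization is missing.} Your architecture matches the paper's. You clip at $\tau=1/(n\log n)$, treat $\clip_\tau(X)$ by Bernstein plus a net, remove $\E T$ from the positive part via $\|X_P\|_1\le n\log n$, dominate $|X_P|$ by level-set boxes, and union-bound over witnesses with Lemma~\ref{lemma:chernoff}. But the step you defer as ``the main obstacle'' is the substance of the proof, and the proposal does not resolve it. You cite Definition~\ref{def:witness} but never use its normalization, and your witnesses $c_a\mathbf 1_{S_1\times\cdots\times S_r}$ leave $c_a$ unspecified.

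Controlling each box $2^{-\sum_k a_k/2}\mathbf 1_{S_{1,a_1}\times\cdots\times S_{r,a_r}}$ at a common threshold and summing costs the number of level tuples, which is polylogarithmic in $n$. The natural weights $\prod_k 2^{-a_k/2}\sqrt{|S_{k,a_k}|}$ can sum to a power of $\log n$, for instance when each $x_k$ is spread evenly over the $\Theta(\log n)$ levels; this is the Cauchy--Schwarz loss you mention. Grouping by $\sum_k a_k$, or noting that large boxes have $\prod_k|S_k|\gtrsim n\log n$, is never turned into a weighting that removes this loss, and removing it is the whole point of the theorem.

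The missing idea is the normalization $\phi(S)=\sqrt{|S|}/\max(4r^2,\log^2|S|)$ combined with level sets at base $r$. Since $|A_{k,i}|\le r^{2k_i+2}$ and $\phi$ is increasing, $r^{-k_i}\phi(A_{k,i})\le r/\max(4r^2,(2k_i+2)^2\log^2 r)$. This is a series in $k_i$ with sum $<1$, so the total weight is bounded independently of the number of levels. The cost is a factor up to $\log^{2r}n$ in the witnesses. This is affordable only because \eqref{eq:rAssumption} gives $\log^{2r}n\le n^{2\eps/5}$, so the Chernoff ratio stays $n^{\Omega(\eps)}$. The witnesses $\bigotimes_i\mathbf 1_{A_i}/\phi(A_i)$ may then only be used for balanced boxes, $\frac1r\sum_i|A_i|\le\prod_i\phi(A_i)$; this condition is exactly what lets the threshold pay the entropy $\sum_i|A_i|\log n$.

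\textbf{The unbalanced case.} For the remaining boxes, ``truncating that factor'' is too vague to check. The paper replaces the largest set $A_j$ by $[n]$, which removes its entropy, and normalizes by $\frac1{24r^4}\sum_{i\ne j}|A_i|+\frac1{4r\log^4 n}\prod_{i\ne j}\phi(A_i)^2$. The first term pays the entropy of the other factors; the second keeps the threshold $n^{\Omega(\eps)}$ above the mean $Ln^{-\eps}\prod_{i\ne j}|A_i|$. It then bounds the total weight using unbalancedness and $|A_{k,i}|\le|A_{k,j}|$, via $\sum_{k_i,k_j}r^{-k_i-k_j}\min(|A_{k,i}|,|A_{k,j}|)\le 2r^2\frac{r}{r-1}$.

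This is also where base $r$ matters. The other $r-2$ coordinates each contribute $\sum_{k\ge0}r^{-k}=\frac r{r-1}$, giving a bounded product. With your base-$\sqrt2$ levels the same computation gives $(1-2^{-1/2})^{-(r-2)}$, which is exponential in $r$. Your claim that a $2^{O(r)}$ rounding loss is ``absorbed'' by the $n^{-\eps}$ gains of the union bound is not an argument. Those gains control probabilities, not a multiplicative loss in the value of the supremum. In the paper, the per-coordinate rounding factor $r$ is absorbed into the $4r^2$ in $\phi$, that is, into which boxes count as balanced.

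\textbf{Minor point on the Bernstein part.} Multi-level chaining does not help here: increments of clipped tensors are only bounded by $2\tau$ in $\ell_\infty$ at every level, while the nets grow. Your claimed bound $r(1+t)$ is therefore unsupported. A single net of mesh $\frac1{2rn^{r/2}}$, together with $\|T-\E T\|_2\le 2n^{r/2}$, gives $\sqrt L(r^2+t)$, which suffices.
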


\subsection{Applications \texorpdfstring{of Theorem~\ref{thm:mainTensorProof}}{}}
\label{sec:applis}

Before proving Theorem~\ref{thm:mainTensorProof}, we give several applications and comparisons with prior work.
First, if each entry of $T$ is distributed as $\textnormal{Ber}(n^{-1-\eps})$, Theorem~\ref{thm:mainTensorProof} recovers Theorem~\ref{thm:mainTensor} as a corollary. 

\begin{proof}[Proof of Theorem~\ref{thm:mainTensor} from Theorem~\ref{thm:mainTensorProof}]
    Let $T\sim \calH_r(n,p)$. The upper bound \[\normin{T} - \normin{T - \E T}\le \normin{\E T}\le pn^{r/2}\]is immediate. For the lower bound, first, applying the
    all-ones
    test vector shows that
    \[
        \normin{T}\ge \frac 1 {n^{r/2}} \sum_{i\in [n]^r} T_i\,.
    \]
    By Lemma~\ref{lem:bernstein}, the right-hand side is at least $(1-o(1))pn^{r/2}$
    with high probability. It remains
    to apply Theorem~\ref{thm:mainTensorProof} to $\normin{T - \E T}$. 
    
    Although the entries of an \Erdos-\Renyi adjacency tensor are not independent (because of the symmetry), we can partition its entries according to the ordering of their indices, writing the tensor as a sum of $r!$ tensors with independent entries. Since $r$ is fixed,~\eqref{eq:rAssumption} holds
    for sufficiently large $n$.
    Therefore, setting $L=\max(1, pn^{1.01})$, we can apply Theorem~\ref{thm:mainTensorProof} and we obtain, with high probability,
    \[
        \normin{T - \E T}\lesssim_r L = o(pn^{r/2})\,,
    \]
    whenever $r\ge 3$ and $pn^{r/2}\to\infty$. This concludes the proof.
\end{proof}

\begin{remark}    
    The high probability bound also shows
    that the conclusion of Theorem~\ref{thm:mainTensor} still holds for the variant of the
    \Erdos-\Renyi model in which $m$ hyperedges are selected
    uniformly at random. This follows by conditioning the model in Definition~\ref{def:erdos}
    on having exactly $m=p\binom {n} r$ hyperedges,
    since this event has probability
    at least $\Omega(m^{-1/2})$.
\end{remark}

\begin{example}[Expander mixing lemma]\label{ex:mixingLemma}
    Applying the injective norm bound
    to $\{0,1\}$-valued test vectors
    directly yields an improved expander
    mixing lemma for sparse \Erdos-\Renyi
    hypergraphs. More precisely, for every
    $\eps\in (0,1)$, with high probability over $T\sim \calH_r(n,p)$, the following holds
    simultaneously
    for every collection of pairwise disjoint sets $U_1, \ldots, U_r\subseteq [n]$:
    \[
        \Abs{e_T(U_1, \ldots, U_r) - p \prod_{i=1}^r \Abs{U_i}}\lesssim_{r,\eps} (1+pn^{1+\eps})\prod_{i=1}^r \sqrt{\Abs{U_i}}\,,
    \]
    where $e_T(U_1, \ldots, U_r)$ denotes the
    number of hyperedges containing
    exactly one vertex from each of $U_1, \ldots, U_r$.
\end{example}

\begin{example}[Tensor completion]
    Given a
    deterministic tensor $T\in (\R^n)^{\otimes 3}$, we form a random set $\Omega\subseteq [n]^3$
    of entries by including each entry independently
    with probability $p \defeq \alpha n^{-3/2}$ for some $\alpha\ge 1$.
    Then, we
    reveal the entries of $T$ in $\Omega$. Denote by $P_\Omega(T)$ the observed tensor (i.e., $P_\Omega(T)_i = T_i$ if $i\in \Omega$,
    and $P_\Omega(T)_i=0$ otherwise). Note that $\frac 1p P_\Omega(T)$ is an unbiased
    estimator of $T$. The question
    is how accurately this estimator approximates
    $T$ in injective norm. 
    
    This question arises, for example, in the analysis
    of algorithms for tensor completion,
    where the goal is to reconstruct $T$ from $P_\Omega(T)$. In their analysis of the alternating minimization algorithm for this task, Jain and Oh~\cite[Appendix A]{tensorCompletionJain} prove the estimate
    \begin{equation*}
        \frac 1 {\Normi{T} n^{3/2}} \E \normin{\frac 1p P_\Omega(T) - T}\lesssim \frac{\log^2 n} {\sqrt \alpha}\,.
    \end{equation*}
    In this setting,
    Theorem~\ref{thm:mainTensorProof}
    implies that if $p\le n^{-1.01}$, then
    \begin{equation*}
        \frac 1 {\Normi{T}  n^{3/2}} \E \normin{\frac 1p P_\Omega(T) - T}\lesssim \frac{1} {\alpha}\,.
    \end{equation*}
\end{example}

\begin{example}[Multilayer community detection]
    In the stochastic block model,
    one considers $n$ individuals and
    an unknown community assignment $\sigma:[n]\to [k]$ of the individuals to $k$ communities. In the multilayer version of the problem, we observe $L$ undirected graphs (layers)
    $G_1, \ldots, G_L$ on the same vertex set $[n]$. In $G_\ell$, an edge between $u$
    and $v$ appears with probability $p_\ell(\sigma(u), \sigma(v))$, independently over all $\ell\in [L]$ and all
    $1\le u<v\le n$. Given $G_1, \ldots, G_L$,
    the goal is to recover $\sigma$ (up to a
    permutation of the community labels).
    
    The observations can be collected in an order-$3$
    tensor $T\in \R^{L\times n\times n}$,
    where $T_{\ell uv} = 1$ if $\{u,v\}$ is an edge in $G_\ell$, and $T_{\ell uv} = 0$ otherwise. 
    Lei, Chen, and Lynch~\cite{multiLayerSBM} analyze a least-squares estimator for this problem.
    A key ingredient in their analysis is an
    estimate for the concentration of $T$ around its expectation. When $L\le n$, they show~\cite[Theorem 2]{multiLayerSBM} that
    \[
        \normin{T - \E T}\lesssim \log^{3/2} n + \sqrt{n\rho } \log n\,,\qquad \rho \defeq \max_{\ell \in [L], a,b\in [k]} p_\ell(a,b)\,.
    \]
    In contrast, if $\rho\le n^{-1.01}$, then
    Theorem~\ref{thm:mainTensorProof} gives the stronger bound $\normin{T - \E T} \le O(1)$ with 
    high probability.
\end{example}

\subsection{Proof of \texorpdfstring{Theorem~\ref{thm:mainTensorProof}}{main theorem}}
\label{sec:proofFirst}

Recall that $\calX$ denotes the set of
unit pure tensors in~\eqref{eq:pureTensors}.
We decompose $\calX$ into
\begin{equation}
    \calX_B \defeq \{\clip_\tau(X) : X\in \calX\} \text{ and }\calX_P \defeq \{X - \clip_\tau(X) : X\in \calX\}\,,\label{eq:decomposition}
\end{equation}
for the threshold $\tau\defeq\frac 1 {n\log n}$.
Clearly, we have $\calX\subseteq \calX_P + \calX_B$ by construction. As in Section~\ref{sec:overview}, we refer to $\calX_B$
as the ``Bernstein part'' of the process and to $\calX_P$
as its ``positive part''.
The following lemmas bound the two suprema separately.

\begin{lemma}[Bounding the Bernstein part]\label{lem:bernsteinPart}
    Let $r,n,\eps,L$, and $T\in (\R^n)^{\otimes r}$
    satisfy the assumptions of Theorem~\ref{thm:mainTensorProof}. Then,
    for any $t>0$,
    \[
        \Pr\left(\sup_{X\in \calX_B} \iprod{T - \E T, X} \gtrsim \sqrt L(r^2 + t)\right)\le e^{-tn\log n}\,.
    \]
\end{lemma}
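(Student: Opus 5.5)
We prove the lemma by a chaining (net) argument that uses only Bernstein's inequality (Lemma~\ref{lem:bernstein}). The clipping at level $\tau = 1/(n\log n)$ makes every entry of every $X\in\calX_B$ tiny, so the sub-exponential term in Bernstein is harmless at the scale of a union bound over $\exp(O(rn\log n))$ points. Concretely, for a fixed $Y$ with $\|Y\|_\infty\le \tau$ and $\|Y\|_2\le 1$, the summands $(T_i-\E T_i)Y_i$ are independent, mean zero, and bounded by $2\tau$. Their total variance is at most $\sum_i \Pr(T_i\ne 0)Y_i^2\le Ln^{-1-\eps}$. Bernstein then gives
\[
\Pr\left(\iprod{T-\E T,Y}>s\right)\le \exp\left(-\tfrac14\min\left(\tfrac{s^2 n^{1+\eps}}{L},\ \tfrac{s\, n\log n}{2}\right)\right).
\]
For $s\asymp \sqrt L(r^2+t)$ and $L\ge 1$, the first term dominates the second once $n^{\eps}\gtrsim \log n$, which follows from~\eqref{eq:rAssumption}. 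So the tail is $\exp(-c\, s\, n\log n)$, i.e.\ roughly $\exp(-c\sqrt L (r^2+t) n\log n)$.

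Next we discretize. Fix an $\eta$-net $\mathcal N$ of the unit sphere with $\eta = 1/(4r)$, or even $\eta=n^{-2}$; in the latter case $|\mathcal N|^r\le \exp(O(rn\log n))$, which fits the budget. For $X=x_1\otimes\cdots\otimes x_r$, take net points $y_j$ with $\|x_j-y_j\|_2\le\eta$ and compare $\clip_\tau(X)$ with $\clip_\tau(y_1\otimes\cdots\otimes y_r)$. Since $\clip_\tau$ is $1$-Lipschitz entrywise,
\[
\|\clip_\tau(X)-\clip_\tau(Y)\|_2\le \|X-Y\|_2\le r\eta .
\]
This difference is \emph{not} a pure tensor, so we cannot absorb it by the usual self-bounding trick. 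Instead we bound it crudely: $|\iprod{T-\E T,Z}|\le \|T-\E T\|_2\|Z\|_2$, and $\|T-\E T\|_2\le n^{r/2}+\sqrt{\sum_i|T_i|}$. With $\eta=n^{-r}$ the error is at most $r n^{-r/2}\cdot O(1)$ deterministically. The net then has cardinality $(3n^r)^{rn}=\exp(O(r^2 n\log n))$, and this $r^2$ is exactly the $r^2$ in the statement. The union bound gives probability at most $\exp(O(r^2 n\log n)-c\sqrt L(Cr^2+t)n\log n)\le e^{-tn\log n}$ for a large constant $C$, using $\sqrt L\ge 1$.

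The main obstacle is making the Bernstein tail win against the entropy with the exact form $e^{-tn\log n}$. This needs the constant in $s$ to be large relative to both the net size and the factor $1/4$. It also needs the variance branch $s^2 n^{1+\eps}/L$ to be at least the linear branch, i.e.\ $s\ge L\log n/n^{\eps}$. With $s\ge\sqrt L r^2$, this reduces to $\sqrt L\, n^{-\eps}\log n\lesssim r^2$. That holds when $L\le n^{2\eps}/\log^2 n$. For larger $L$ the hypothesis on $T$ is weaker, and we would instead use the variance bound $\sum_i\E (T_i-\E T_i)^2 Y_i^2\le \|Y\|_2^2\le 1$, coming from $|T_i|\le 1$. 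In that regime we would again check which Bernstein branch controls the exponent. If this case split turns out to be awkward, we fall back on replacing $L$ by $\min(L,n^{1+\eps})$, since $\Pr(T_i\ne0)\le1$ always.
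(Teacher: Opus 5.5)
Your architecture matches the paper's. You apply Bernstein to a single clipped net point, with summands bounded by $2\tau$ and variance at most $Ln^{-1-\eps}$. You take a net fine enough that the crude estimate $\|T-\E T\|_2\,\|X-Y\|_2$ absorbs the non-pure-tensor error $\operatorname{clip}_\tau(X)-\operatorname{clip}_\tau(Y)$. You then take a union bound over $\exp(O(r^2 n\log n))$ points. Of the mesh sizes you mention, only $\eta\approx n^{-r}$ is compatible with that crude estimate: $\eta=n^{-2}$ leaves an error of order $rn^{r/2-2}$, which is unbounded once $r\ge 5$. The paper uses $\eta=1/(2rn^{r/2})$.

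The step that fails is your evaluation of the Bernstein exponent. With $s=C\sqrt L(r^2+t)$, the variance branch is $s^2n^{1+\eps}/L=C^2(r^2+t)^2n^{1+\eps}$, in which $L$ cancels. It is the smaller branch as soon as $\sqrt L\gtrsim (r^2+t)n^{\eps}/\log n$, and this regime is allowed, since the lemma is claimed for every $L\ge 1$. So the tail $\exp(-c\sqrt L(r^2+t)n\log n)$ that your final union bound uses is false for large $L$.

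You sense this in your last paragraph, but the criterion you set there (variance branch at least the linear branch) is not what is needed, and your repairs do not close the gap. The variance bound $1$ is weaker than $Ln^{-1-\eps}$ whenever $L<n^{1+\eps}$. Replacing $L$ by $\min(L,n^{1+\eps})$ changes nothing in the intermediate range $n^{2\eps}/\log^2 n\lesssim L<n^{1+\eps}$.

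The missing observation is that you never need $\sqrt L$ in the exponent, because the entropy is only $O(r^2 n\log n)$. It suffices that both branches are at least $\frac C2(r^2+t)n\log n$, and they are for every $L\ge 1$:
\begin{itemize}
\item The variance branch satisfies this because \eqref{eq:rAssumption} with $r\ge 2$ gives $n^{\eps}\ge \log^{10} n$.
\item The linear branch equals $\frac C2\sqrt L(r^2+t)n\log n\ge \frac C2(r^2+t)n\log n$.
\end{itemize}
Hence each net point has tail at most $e^{-\frac C8(r^2+t)n\log n}$ uniformly in $L$, which is exactly the paper's estimate. The union bound then gives $e^{-tn\log n}$ for $C$ large, with no case split.

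Also put the large constant on $t$ as well, i.e.\ use the threshold $C\sqrt L(r^2+t)$ rather than $\sqrt L(Cr^2+t)$. Otherwise the factor $\frac18$ from Bernstein leaves you with only $e^{-tn\log n/8}$.
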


\begin{lemma}[Bounding the positive part]\label{lem:positivePart}
    Let $T\in (\R^n)^{\otimes r}$ be a random tensor with i.i.d.\ $\textnormal{Ber}(Ln^{-1-\eps})$ entries, where $r,n,\eps,L$
    satisfy the assumptions of Theorem~\ref{thm:mainTensorProof}. Then,
    for any $t>0$,
    \[
        \Pr\left(\sup_{X\in \calX_P} \iprod{T, X}\gtrsim \frac {Lr^3} \eps (r + t)\right)\le n^{-t}\,.
    \]
\end{lemma}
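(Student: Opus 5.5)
The plan is to discard signs, reduce to $\{0,1\}$-valued test tensors through a dyadic decomposition, prove a union bound over normalized indicator tensors (the witness set), and then resum over dyadic levels. Since $T\ge 0$ entrywise and $|X-\clip_\tau(X)|\le |X|\,\mathbf 1_{\{|X|>\tau\}}$, we have
\[
\sup_{X\in\calX_P}\iprod{T,X}\le \sup_{x_1,\ldots,x_r}\iprod{T,\ |x_1|\otimes\cdots\otimes|x_r|\cdot \mathbf 1_{\{|x_1|\otimes\cdots\otimes |x_r|>\tau\}}}\,.
\]
So we may take all $x_k\ge 0$ with $\|x_k\|_2=1$. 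For each $k$, write $x_k\le \sum_{j\ge 0}2^{-j}\mathbf 1_{S_{k,j}}$, where $S_{k,j}=\{i: 2^{-j-1}<x_{k,i}\le 2^{-j}\}$ is the $j$-th dyadic level set. These sets satisfy $\sum_j 4^{-j}|S_{k,j}|\le 4$, and in particular $|S_{k,j}|\le 4^{j+1}$. The indicator $\mathbf 1_{\{\cdot>\tau\}}$ keeps only tuples $(j_1,\ldots,j_r)$ with $2^{-\sum_k j_k}\gtrsim \tau$, i.e.\ $\sum_k j_k\le \log_2(n\log n)+O(r)$. The target therefore becomes
\[
\sum_{\substack{j_1,\ldots,j_r\\ \sum j_k\lesssim \log(n\log n)}} 2^{-\sum_k j_k}\, e_T(S_{1,j_1},\ldots,S_{r,j_r}),\qquad e_T(S_1,\ldots,S_r)\defeq\iprod{T,\mathbf 1_{S_1}\otimes\cdots\otimes \mathbf 1_{S_r}}\,.
\]
This exhibits the positive part as dominated by the solid convex hull of normalized indicator pure tensors, which serve as the witness set.

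Next, I will establish two high-probability structural properties of $T$, each by the Chernoff bound (Lemma~\ref{lemma:chernoff}) plus a union bound.

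(i) \emph{Bounded line degrees.} Every one-dimensional slice (fix $r-1$ coordinates) contains at most $d\defeq C(r+t)/\eps$ nonzero entries. Indeed, a fixed line has $n$ entries, each nonzero with probability $Ln^{-1-\eps}$, so the probability of $d$ nonzero entries is at most $(eLn^{-\eps})^d$. A union over the $rn^{r-1}$ lines costs $n^{r}$, which is beaten once $d\eps\gtrsim r+t$ (absorbing $L$, which enters the final bound linearly).

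(ii) \emph{Discrepancy for boxes.} For all sets $S_1,\ldots,S_r$ with $s_1\ge\cdots\ge s_r$, write $\mu=Ln^{-1-\eps}\prod_k s_k$. Then either $e_T(S)\le e^2\mu$ or
\[
e_T(S)\log\frac{e_T(S)}{\mu}\lesssim (r+t)\, r\, s_1\log\frac{en}{s_1}\,.
\]
The union bound runs over at most $\prod_k\binom{n}{s_k}\le \exp(r s_1\log(en/s_1))$ boxes. The extra factor $r$ and the condition~\eqref{eq:rAssumption} are used to absorb lower-order terms such as $\log\log n$ and the number of size profiles.

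Finally, I will mimic the Kahn--Szemer\'edi/Feige--Ofek summation in the $r$-fold setting. The tuples are split according to which bound in (ii) is active.

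In the regime $e\le e^2\mu$, the contribution is at most $\sum 2^{-\sum j_k}Ln^{-1-\eps}\prod_k|S_{k,j_k}|$. Using $2^{-j_k}|S_{k,j_k}|\le 2^{j_k}\cdot 4^{-j_k}|S_{k,j_k}|$ and the constraint $\sum j_k\lesssim\log(n\log n)$, this is at most $L n^{-1-\eps}\cdot n\log n\cdot 4^r$. That quantity is $o(L)$ by~\eqref{eq:rAssumption}, since $4^r\log n\le n^{\eps}$.

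In the other regime, the degree bound (i) gives $e_T(S)\le d\cdot\min_k\prod_{k'\ne k}s_{k'}$. Combined with the logarithmic gain in (ii), I will group tuples by the ratio $e/\mu$ and by which coordinate carries the largest set. In each group I will apply Cauchy--Schwarz in the form $\sum_j 2^{-j}\sqrt{|S_{k,j}|}\cdot(\ldots)$ with $\sum_j4^{-j}|S_{k,j}|\le 4$, as in the matrix proof of Feige and Ofek. I expect each coordinate's bookkeeping to cost a factor $r$, giving $L r^3(r+t)/\eps$ overall.

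I expect this last summation to be the main obstacle. The naive bound $e\lesssim\prod_k\sqrt{s_k}$ is false in general for $r\ge3$, and summing the discrepancy bound over all $r$-tuples of levels without loss requires the right normalization of the indicator tensors. What I would try first is to normalize each witness $\mathbf 1_{S_1}\otimes\cdots\otimes\mathbf 1_{S_r}$ by $\sqrt{s_1 s_2}\,\prod_{k\ge3}s_k$ rather than by $\prod_k\sqrt{s_k}$. This treats the two largest sets as in the matrix case and lets the degree bound (i) absorb the remaining coordinates. I would then check that the clipping threshold $\tau=1/(n\log n)$ makes the resulting geometric series over levels converge with only polynomial-in-$r$ loss.
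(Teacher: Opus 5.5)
\textbf{Verdict: there is a genuine gap.} Your preliminary reductions are fine. Dropping signs, dominating the positive part by level-set indicator tensors, and the estimate in the regime $e_T\le e^2\mu$ all go through, and they mirror the paper's reduction of $\mathcal X_P$ to the solid convex hull of normalized indicator tensors.

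The missing piece is the step that carries the whole difficulty. That step is summing $2^{-\sum_k j_k}e_T(S_{1,j_1},\ldots,S_{r,j_r})$ over level tuples with $e_T\gg\mu$, without losing polylogarithmic factors or factors exponential in $r$. You only announce it, and it is exactly where earlier Kahn--Szemer\'edi-type arguments for tensors lose.

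Moreover, the normalization you propose to try first fails. Take $r=3$, $x_1=n^{-1/2}\mathbf 1_{[n]}$ and $x_2=x_3=m^{-1/2}\mathbf 1_{[m]}$ with $\sqrt n\le m<\sqrt n\log n$.
\begin{itemize}
\item Every entry $n^{-1/2}m^{-1}$ exceeds $\tau$, so the tensor survives clipping.
\item There is a single level tuple, with sizes $(n,m,m)$.
\item Its coefficient against $\mathbf 1_S/(\sqrt{s_1s_2}\,s_3)$ is at least $n^{-1/2}m^{-1}\cdot\sqrt{nm}\cdot m=\sqrt m\ge n^{1/4}$.
\end{itemize}
So the total mass in the solid hull is unbounded. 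Treating the $\prod_{k\ge 3}s_k$ slices separately discards the concentration across slices. What is actually needed here is $e_T\lesssim\sqrt n\, m=\prod_k\sqrt{s_k}$, which does hold by a direct union bound over $\binom{n}{m}^2$ choices. The line-degree bound (i) only gives $e_T\le dm^2$, which is off by a factor of about $d\log n$ when $m$ is near $\sqrt n\log n$.

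The missing idea is the choice of witnesses. The paper splits level tuples into two classes.
\begin{itemize}
\item \textbf{Balanced tuples}, with $\frac1r\sum_i|A_i|\le\prod_i\phi(A_i)$, are normalized by $\prod_i\phi(A_i)$, where $\phi(S)=\sqrt{|S|}/\max(4r^2,\log^2|S|)$. The $\log^2$ removes the per-coordinate logarithmic loss that $\prod_k\sqrt{s_k}$ incurs. For level sets in base $r$ one has $|A_k|\le r^{2k+2}$, hence $r^{-k}\phi(A_k)\le r/\max(4r^2,(2k+2)^2\log^2 r)$, and this sums over $k$ to less than $1$. The resulting $\log^{2r}n$ loss is paid by the $n^{\varepsilon}$ slack in the Chernoff ratio, via \eqref{eq:rAssumption}. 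The balance condition is exactly what makes the Chernoff exponent $\gtrsim t\sum_i|A_i|\log n$, so that it beats the count $\prod_i\binom{n}{|A_i|}$.
\item \textbf{Unbalanced tuples} have the largest set $A_j$ replaced by $[n]$, so the union bound costs nothing in that coordinate. The witness is normalized by $\frac{1}{24r^4}\sum_{i\ne j}|A_i|+\frac{1}{4r\log^4 n}\prod_{i\ne j}\phi(A_i)^2$. The coefficients of the first term are controlled by $\sum_{k_1,k_i}r^{-k_1-k_i}\min(|A_{k,1}|,|A_{k,i}|)\lesssim r^2$. Those of the second term are controlled by the unbalance inequality.
\end{itemize}

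The base-$r$ levels also matter. The free coordinates contribute $\prod_\ell\sum_{k_\ell}r^{-k_\ell}\le (r/(r-1))^{r-2}\le 3$, whereas with your dyadic levels the same product is $2^{r-2}$. That is incompatible with the $\mathrm{poly}(r)$ bound you are aiming for. So is your heuristic of ``a factor $r$ per coordinate'', which would give $r^r$, not $r^3$.
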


\begin{proof}[Proof of Theorem~\ref{thm:mainTensorProof} from Lemmas~\ref{lem:bernsteinPart} and~\ref{lem:positivePart}]
    Lemma~\ref{lem:bernsteinPart} 
    directly bounds
    the supremum of the Bernstein part
    of the original process.
    For the positive part, note that every $X\in \calX_P$ has at most $n^2 \log^2 n$ nonzero entries. Therefore 
    \[
        \abs{\iprod{\E T, X}}\le \norm{\E T}_\infty \norm{X}_1\le Ln^{-1-\eps} n \log n = L  n^{-\eps} \log n\overset{\eqref{eq:rAssumption}} \le \frac {Lr^4} \eps\,,
    \]
    so it suffices to bound the uncentered process
    $\sup_{X\in \calX_P} \langle T, X\rangle$. Since $\calX_P = -\calX_P$, we may furthermore assume, without loss of generality, that the entries of $T$ are supported on
    $[0,1]$.
    Among all distributions on $[0,1]$ satisfying
    the assumptions of Theorem~\ref{thm:mainTensorProof},
    the probability $\Pr\left(\sup_{X\in \calX_P} \langle T, X\rangle>t\right)$ is maximized when the entries of $T$ are distributed as $\textnormal{Ber}(Ln^{-1-\eps})$ (for every $t$).
    Hence, the tail bound in Theorem~\ref{thm:mainTensorProof}
    follows from Lemmas~\ref{lem:bernsteinPart}
    and~\ref{lem:positivePart}, and integrating this tail bound yields the expectation bound.
\end{proof}

The rest of the section
is devoted to the proof of
Lemmas~\ref{lem:bernsteinPart}
and~\ref{lem:positivePart}.

%% file: BernsteinPartVJ26.tex
\subsection{Bernstein part \texorpdfstring{(proof of Lemma~\ref{lem:bernsteinPart})}{}}

First,
for every fixed $X\in \calX_B$, every $C\geq 1$, and
every $t>0$, Bernstein's inequality (Lemma~\ref{lem:bernstein}) gives
\begin{align*}
    \Pr\left(\iprod{T - \E T, X}> C\sqrt L(r^2+t)\right)
    &\le
    \exp\left(
        -\frac 14 \min\left\{
            \frac {C^2(r^2+t)^2} {n^{-1-\eps}},
            \frac{C \sqrt L(r^2+t)} {2\tau}
        \right\}
    \right)\nonumber\\
    &\overset{\eqref{eq:rAssumption}}\le e^{-\frac C 8 (r^2+t) n\log n}\,.
\end{align*}
Next, we show that a net of $2^{O(r^2 n\log n)}$ test vectors suffices to control the supremum of the process over $\calX_B$. Indeed, for any
unit vectors $x_1, \ldots, x_r$
and $y_1, \ldots, y_r$, letting
$X \defeq \bigotimes_{i\in [r]} x_i$
and $Y \defeq \bigotimes_{i\in [r]} y_i$,
\begin{align*}
    \left|
    \iprod{T - \E T, \clip_\tau(X) - \clip_\tau(Y)}
    \right|
    &\le \norm{T - \E T}_2
    \norm{\clip_\tau(X) - \clip_\tau(Y)}_2\tag*{(Cauchy-Schwarz)} \\
    &\le \norm{T - \E T}_2 \norm{X-Y}_2 \tag*{($\clip_\tau$ is 1-Lipschitz)}\\
    &\le 2 n^{\frac r2} \norm{X-Y}_2 \tag*{($|T_i|\le 1$)}\\
    &\le 2rn^{\frac r2}
    \max_{i\in [r]} \|x_i-y_i\|_2\,. \tag*{(triangle inequality and $x_i,y_i$ unit)} 
\end{align*}
Let $\calN$ be a $\frac 1 {2r n^{r/2}}$-net of the unit sphere of $\R^n$; we
may choose it so that $|\calN|\le O(rn^{r/2})^n$ using standard constructions. Define $\calS \defeq \{\clip_\tau(y_1 \otimes \ldots\otimes y_r) : y_1, \ldots, y_r\in \calN\}$.
By the above inequality, for any $X\in \calX_B$, there exists $Y\in \calS$ such that
\[
    \abs{\iprod{T - \E T, X - Y}}\le 1\le r^2 \sqrt L\,.
\]
Moreover, since $|\calS|\le O(rn^{r/2})^{nr}= 2^{O(r^2 n \log n)}$, 
the above tail bound holds simultaneously
for all vectors in the net, provided that $C$ is sufficiently large. Lemma~\ref{lem:bernsteinPart}
then follows.

%% file: WitnessConstructionVJ26.tex
\subsection{Positive part: witness set construction}

We begin by constructing a witness
set for $\calX_P$ (see Section~\ref{sec:overview} for background). We
use the following
normalization for the indicator $\mathbf 1_S$
of a non-empty set $S\subseteq [n]$:
\[
    \phi(S) \defeq \frac {\sqrt{|S|}} {\max(4r^2, \log^2|S|)}\,.
\]
We will repeatedly use the following elementary consequence of~\eqref{eq:rAssumption} and $n\ge 5$:
\begin{equation}
    \phi(S)\ge \frac {\sqrt {|S|}}{\log^2 n}\,.\label{eq:lowerBoundPhi}
\end{equation}

\begin{definition}[Witness set for $\calX_P$]\label{def:witness}
    Define the witness set $\calW= \calW_0 \cup \ldots \cup \calW_r$ as follows. First, let $\calW_0$ be the set of
    balanced indicator witnesses:
    \begin{align*}
        \calW_0 &\defeq \left\{2\bigotimes_{i\in [r]} \frac{\mathbf 1_{A_i}} {\phi(A_i)} : A_1, \ldots, A_r\textnormal{ s.t. } \frac 1r \sum_{i=1}^r |A_i| \le \prod_{i=1}^r \phi(A_i) \text{ and } \prod_{i=1}^r |A_i|\le r^{2r} n^2\log^2 n\right\}\,,
    \end{align*}
    where $A_1, \ldots, A_r$ range over
    all non-empty subsets of $[n]$ 
    satisfying the conditions. Second, for each $j\in [r]$,  let $\calW_j$ be the set of unbalanced indicator witnesses whose $j$-th coordinate is unconstrained:
    \begin{align*}
        \calW_j &\defeq \left\{\frac {\bigotimes\limits_{i=1}^{j-1}\mathbf 1_{A_i}\otimes  \mathbf 1_{[n]} \otimes \bigotimes\limits_{i=j+1}^{r}\mathbf 1_{A_i} } {\frac 1 {24 r^4}\sum\limits_{\substack{i=1\\i\neq j}}^r |A_i| + \frac 1 {4r \log^4 n}\prod\limits_{\substack{i=1\\i\neq j}}^r \phi(A_i)^2} : A_1,\ldots,A_{j-1}, A_{j+1},\ldots,A_r\subseteq [n]\text{ non-empty}\right\}\,.
    \end{align*}
\end{definition}

We show that $\calW$ is a valid
witness set for $\calX_P$.

\begin{lemma} \label{lem:solidContain}
    We have
    \[  
        \calX_P \subseteq \solid\left(\bigcup\limits_{j=0}^r \calW_j\right)\,.
    \]
\end{lemma}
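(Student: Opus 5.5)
The plan is to exhibit, for each $X=x_1\otimes\cdots\otimes x_r\in\calX$, an explicit nonnegative combination $\sum_\ell c_\ell W_\ell$ of witnesses $W_\ell\in\calW$ with $\sum_\ell c_\ell\le 1$ and $|X-\clip_\tau(X)|\le\sum_\ell c_\ell W_\ell$. Since every witness is entrywise nonnegative, any missing mass can be placed on an arbitrary witness, so this gives $X-\clip_\tau(X)\le Y$ for some $Y\in\operatorname{Conv}(\calW)$. The starting point is the entrywise bound
\[
  |X-\clip_\tau(X)|\le |X|\,\mathbf 1[|X|>\tau].
\]
Next, decompose each factor dyadically. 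Let $S_{i,k}\defeq\{a\in[n]: 2^{-k-1}<|x_{i,a}|\le 2^{-k}\}$ for $k\ge 0$, so that $|x_i|\le\sum_k 2^{-k}\mathbf 1_{S_{i,k}}$. Then $|X|\mathbf 1[|X|>\tau]$ is dominated by the sum, over tuples $\mathbf k=(k_1,\dots,k_r)$ with $2^{-\sum_i k_i}>2^{-r}\tau$, of the block tensors $B_{\mathbf k}\defeq 2^{-\sum_i k_i}\bigotimes_i \mathbf 1_{S_{i,k_i}}$. Only the nonempty level sets contribute, and there are at most $O(\log n)$ relevant levels per coordinate.

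Two elementary facts will be used throughout. First, $|S_{i,k}|\le 4^{k+1}\|x_i|_{S_{i,k}}\|_2^2\le 4^{k+1}$. Second, writing $a_{i,k}\defeq\|x_i|_{S_{i,k}}\|_2$, we have $2^{-k}\sqrt{|S_{i,k}|}\le 2a_{i,k}$ with $\sum_k a_{i,k}^2\le 1$. Combined with $\prod_i 2^{-k_i}\gtrsim 2^{-r}\tau$, the first fact gives $\prod_i|S_{i,k_i}|\le 4^{O(r)}n^2\log^2 n\le r^{2r}n^2\log^2 n$ for $r$ not too small (small $r$ is handled by adjusting constants). Hence the product constraint in $\calW_0$ is automatic.

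The analysis then splits into two cases according to the balance condition $\frac1r\sum_i|S_{i,k_i}|\le\prod_i\phi(S_{i,k_i})$.

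In the \emph{balanced case}, write $B_{\mathbf k}=c_{\mathbf k}W_{\mathbf k}$ with $W_{\mathbf k}\in\calW_0$ and $c_{\mathbf k}=\frac12\prod_i 2^{-k_i}\phi(S_{i,k_i})\le 2^{r-1}\prod_i a_{i,k_i}/\max(4r^2,\log^2|S_{i,k_i}|)$. The denominators must absorb both the $2^r$ loss and the sum over tuples. Summing $\prod_i a_{i,k_i}$ over $\mathbf k$ naively costs a $(\log n)^{r/2}$ factor via Cauchy--Schwarz, so a sharper argument is needed. The idea is that $|S_{i,k}|\ge 4^{k}a_{i,k}^2$. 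Levels whose set size is comparable to $4^k$ therefore have $\log^2|S_{i,k}|\gtrsim k^2$, and the series $\sum_k a_{i,k}/\max(4r^2,k^2)$ converges with room to spare; here the factor $4r^2$ pays for the $2^r$. Levels whose set is much smaller than $4^k$ should be exactly those that make the balance condition fail, and these are sent to the second case.

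In the \emph{unbalanced case}, pick a coordinate $j$, namely the one responsible for the imbalance, i.e.\ the one with $|S_{j,k_j}|$ largest relative to the product of the others' $\phi$. Replace $\mathbf 1_{S_{j,k_j}}$ by $\mathbf 1_{[n]}$ and aggregate over all $k_j$ at once, using $\sum_{k_j}2^{-k_j}\mathbf 1_{S_{j,k_j}}\le 2\,\mathbf 1_{[n]}$ on the relevant entries. This produces a multiple of a witness in $\calW_j$ indexed by $(S_{i,k_i})_{i\ne j}$, with coefficient
\[
  2\prod_{i\ne j}2^{-k_i}\cdot\Bigl(\tfrac1{24r^4}\textstyle\sum_{i\ne j}|S_{i,k_i}|+\tfrac1{4r\log^4 n}\prod_{i\ne j}\phi(S_{i,k_i})^2\Bigr).
\]
This coefficient must then be shown to be summable to at most a constant over the remaining $r-1$ indices.

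I expect the main obstacle to be this final bookkeeping. One must verify that the two terms in the $\calW_j$ denominator are each controlled by the failure of balance together with $\prod_i 2^{-k_i}>2^{-r}\tau$ and~\eqref{eq:lowerBoundPhi}. One must also check that the total coefficient mass over both cases is at most $1$. The first thing to try is to bound $\sum_{i\ne j}|S_{i,k_i}|\le r\max_i|S_{i,k_i}|$ and use $|S_{i,k}|\le 4^{k+1}a_{i,k}^2$, so that each term becomes a product of $a_{i,k_i}$'s times geometric factors $2^{-k_i}$. These can then be summed coordinatewise, with $\log^4 n$ and $r^4$ absorbing the number of levels and the combinatorial factors.
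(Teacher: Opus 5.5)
Your overall architecture is the paper's: level sets of each factor, the automatic product constraint, a split by the balance condition, and replacing the largest level set by $\mathbf 1_{[n]}$ in the unbalanced case. However, the unbalanced case, which carries the actual content, fails as written.

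\textbf{The $2^{-k_j}$ weight is lost.} Aggregating over $k_j$ through $\sum_{k_j}2^{-k_j}\mathbf 1_{S_{j,k_j}}\le 2\,\mathbf 1_{[n]}$ throws away the weight $2^{-k_j}$, and your coefficient $2\prod_{i\ne j}2^{-k_i}\Delta$ is then unbounded. For $r=2$ and $x_1=x_2=n^{-1/2}\mathbf 1_{[n]}$ there is a single (unbalanced) block, with $2^{-k_2}\ge n^{-1/2}$ and $|S_{2,k_2}|=n$, so this coefficient is at least $\sqrt n/192$. Each block must be dominated individually by $\prod_{i=1}^r 2^{-k_i}\cdot\mathbf 1_{[n]}\otimes\bigotimes_{i\ne j}\mathbf 1_{S_{i,k_i}}$, keeping $2^{-k_j}$ in the coefficient.

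\textbf{The suggested bookkeeping does not close.} Bounding $\sum_{i\ne j}|S_{i,k_i}|\le r|S_{j,k_j}|\le r4^{k_j+1}a_{j,k_j}^2$ leaves the growing factor $2^{k_j}a_{j,k_j}^2$. For $x_j=n^{-1/2}\mathbf 1_{[n]}$ and $x_i$ a standard basis vector for $i\ne j$, this bound is of order $\sqrt n/r^3$, whereas that block's true coefficient is $O(n^{-1/2})$. The missing idea is to use the maximality of $|S_{j,k_j}|$ pairwise, writing $|S_{i,k_i}|=\min(|S_{i,k_i}|,|S_{j,k_j}|)$. Then bound $\sum_{k_i,k_j}2^{-k_i-k_j}\min(|S_{i,k_i}|,|S_{j,k_j}|)$ by splitting into two cases. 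When $k_i\ge k_j$, use $|S_{j,k_j}|$, then $\sum_k 4^{-k}|S_{j,k}|\le 4$ and a geometric series in $k_i-k_j$; the case $k_j\ge k_i$ is symmetric.

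\textbf{Where the failure of balance enters.} It handles only the other term of $\Delta$. Since $\max(4r^2,\log^2|S_{j,k_j}|)\le\log^2 n$, one gets $\log^{-4}n\prod_{i\ne j}\phi(S_{i,k_i})^2\le\prod_i\phi(S_{i,k_i})^2/|S_{j,k_j}|\le\prod_i\phi(S_{i,k_i})$. This reduces that term to the balanced-case sum.

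\textbf{Ratio-$2$ level sets are too fine.} They are incompatible with the normalization $\frac1{24r^4}$ in $\calW_j$. In the pairwise sum above, each of the remaining $r-2$ coordinates contributes a free factor $\sum_k 2^{-k}\mathbf 1[S_{\ell,k}\ne\emptyset]$. This can be close to $2$, since a unit vector can have nonempty level sets at every dyadic scale. You therefore lose $2^{r-2}$, while \eqref{eq:rAssumption} allows $r$ to grow like $\log n/\log\log n$. The paper uses $A_{k,i}=\{a:(x_i)_a\in[r^{-k-1},r^{-k}]\}$, so these factors become $(1-1/r)^{-(r-2)}\le e$ while still $\sum_k r^{-2k}|A_{k,i}|\le r^2$. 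With this choice, each of the two terms of $\Delta$ contributes at most $\frac1{4r}$ per $j$.

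\textbf{Smaller points.} Your balanced case needs no refinement. The map $x\mapsto\sqrt x/\max(4r^2,\log^2x)$ is increasing on $[1,\infty)$ and $|S_{i,k}|\le 4^{k+1}$, so $2^{-k}\phi(S_{i,k})\le 2/\max(4r^2,(k+1)^2\log^2 4)$. This sums to less than $1$ over $k$, so the $\calW_0$-coefficients over all tuples total at most $\frac12$. Balance is needed only for membership in $\calW_0$, and the claim that sparse levels are exactly the unbalanced ones is neither true nor needed. Finally, use $2^{-\sum_i k_i}>\tau$ (valid because $|X|\le 2^{-\sum_i k_i}$ on the block) rather than $2^{-r}\tau$. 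This gives $\prod_i|S_{i,k_i}|<4^r n^2\log^2 n\le r^{2r}n^2\log^2 n$ for every $r\ge 2$, which matters because the constants in $\calW_0$ are fixed and cannot be adjusted.
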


\begin{proof}
    It suffices to consider
    vectors with non-negative entries. Let $x_1, \ldots, x_r\in \R^n_{\ge 0}$ be unit vectors.
    Given
    $k=(k_1, \ldots, k_r)\in\N^r$ and $i\in [r]$, write
    $A_{k,i} \defeq \{j\in [n] : (x_i)_j\in [r^{-k_i-1}, r^{-k_i}]\}$.
    Since each $x_i$ is a unit vector,
    \begin{equation}
        \sum_{k\ge 0} |A_{k,i}| r^{-2k_i-2} \le 1\implies |A_{k,i}|\le r^{2k_i+2}\,.\label{eq:counting}
    \end{equation}
    Taking tensor products of this level set
    decomposition gives
    \begin{equation}
        \bigotimes_{i\in [r]} x_i - \clip_\tau\left(\bigotimes_{i\in [r]} x_i\right)\le \sum_{\substack{k_1, \ldots, k_r\ge 0\\\prod\limits_{i\in [r]}r^{-k_i}>\tau}} \bigotimes_{i\in [r]} \frac{\mathbf 1_{A_{k,i}}}{r^{k_i}}\,.\label{eq:mainDiscret}
    \end{equation}
    We cover the indices of the sum on the right-hand side of~\eqref{eq:mainDiscret} by
    \begin{align*}
        \calK_{0} &= \left\{(k_1, \ldots, k_r)\in \N^r:
        \frac 1r \sum_{i=1}^r |A_{k,i}|\le \prod_{i=1}^r \phi(A_{k,i})\text{ and } \prod_{i=1}^r |A_{k,i}|\le r^{2r} n^2\log^2 n\right\}\,,\\
        \calK_{j} &= \left\{(k_1, \ldots, k_r)\in \N^r : \frac 1r \sum_{i=1}^r |A_{k,i}| > \prod_{i=1}^r \phi(A_{k,i}) \text{ and } |A_{k,j}| = \max_{i\in [r]} |A_{k,i}|\right\}\quad \forall j\in [r]\,.
    \end{align*}
    First, we claim that 
    the sets $\calK_0,\ldots,\calK_r$
    cover every index appearing in the sum on 
    the right-hand side
    of~\eqref{eq:mainDiscret}. Indeed, by~\eqref{eq:counting}, we have
    \[
        \prod_{i\in [r]} r^{-k_i}>\tau \implies \prod_{i=1}^r |A_{k,i}|\le r^{2r} \prod_{i=1}^r r^{2k_i}\le \frac {r^{2r}} {\tau^2} = r^{2r} n^2 \log^2 n\,,
    \]
    so the second condition
    in the definition of $\calK_0$
    is automatically satisfied.

    Second, we show that, for each $j\in \{0, \ldots, r\}$, the sum over $\calK_j$
    lies in the solid convex hull
    of the corresponding $\calW_j$.
    
    \begin{lemma}\label{lemma:bSum}
    We have
    \[
        \sum_{k\in \calK_0} \bigotimes_{i\in [r]} \frac {\mathbf 1_{A_{k,i}}} {r^{k_i}} \in \frac12 \solid(\calW_0)\,.
    \]
    \end{lemma}
    
    \begin{lemma}\label{lemma:uSum}
        For any $j\in [r]$, we have
        \[\sum_{k\in \calK_j} \bigotimes_{i\in [r]} \frac {\mathbf 1_{A_{k,i}}} {r^{k_i}}  \in \frac 1{2r}  \solid(\calW_j)\,.
        \]
    \end{lemma}
    
    \noindent Lemmas~\ref{lemma:bSum}
    and~\ref{lemma:uSum} complete the proof of Lemma~\ref{lem:solidContain}.
\end{proof}

\begin{proof}[Proof of Lemma~\ref{lemma:bSum}]
    After an appropriate rescaling, each tensor $\bigotimes_i \mathbf 1_{A_{k,i}}$ appearing in the
    sum
    is an element of $\calW_0$. Thus, we can expand the sum as a linear combination of elements of $\calW_0$,
    with total coefficient
    \begin{align*}
        \frac 12 \sum_{k\in \calK_0} \prod_{i=1}^r r^{-k_i} \phi(A_{k,i}) &\le \frac12\sum_{k_1, \ldots, k_r\ge 0} \prod_{i=1}^r \frac r {\max(4r^2, (2k_i+2)^2\log^2 r)}\,,
        \intertext{where we used~\eqref{eq:counting} and the fact that $x\mapsto \frac {\sqrt x} {\max(4r^2, \,\log^2 x)}$
    is increasing on $[1,\infty)$. The sum
    factorizes, yielding}
        &\le \frac 12 \left(\sum_{k\ge 1} \min\left(\frac 1{4r}, \frac r {4k^2 \log^2 r}\right)\right)^r\,.
    \end{align*}
    Let $k^* \defeq \frac r {\log r}$ denote the crossover value at which the
    second term achieves the minimum.
    By a sum-integral comparison,
    \begin{align*}
        \sum_{k\ge 1} \min\left(\frac 1{4r}, \frac r {4\log^2 r}\cdot \frac 1 {k^2}\right)
        &\le \frac {k^*} {4r} + \frac r {4\log^2 r} \int_{k^*}^\infty \frac 1 {x^2} \,\mathrm dx = \frac 1 {2\log r}<1\,,
    \end{align*}
    since $r\ge 2$.
    Hence, the sum of the coefficients is at most $1/2$, concluding the argument.
\end{proof}

\begin{proof}[Proof of Lemma~\ref{lemma:uSum}]
    Without loss of generality,
    assume $j=1$. As in Lemma~\ref{lemma:bSum}, we bound the total coefficient in the natural expansion of the left-hand side in terms of vectors in $\calW_1$, which is
    \[
        \sum_{k\in \calK_1} \prod_{i=1}^r r^{-k_i}\left(\frac 1 {24r^4}\sum_{i=2}^r |A_{k,i}| + \frac 1 {4r\log^4 n}\prod_{i=2}^r \phi(A_{k,i})^2\right)\,.
    \]
For the second term, since $\log^4 n\ge \max(4r^2, \log^2 |A_{k,1}|)^2$ by~\eqref{eq:rAssumption}, we have
\[
    \frac 1 {\log^4 n}\prod_{i=2}^r \phi(A_{k,i})^2 \le \frac{\prod_{i=1}^r \phi(A_{k,i})^2} {|A_{k,1}|}\le \frac {\prod_{i=1}^r \phi(A_{k,i}) \cdot \frac 1r \sum_{i=1}^r |A_{k,i}|} {|A_{k,1}|} \le \prod_{i=1}^r \phi(A_{k,i})\,.
\]
Therefore, using the same argument as in Lemma~\ref{lemma:bSum},
\[
    \frac 1{4r\log^4 n} \sum_{k\in \calK_1} \prod_{i=1}^r r^{-k_i} \prod_{i=2}^r \phi(A_{k,i})^2\le \frac 1 {4r} \sum_{k_1, \ldots k_r\ge 0} \prod_{i=1}^r  r^{-k_i} \phi(A_{k,i})<\frac 1 {4r}\,.
\]
It remains
to bound the first term,
\begin{align*}
    \sum_{k\in \calK_1} \prod_{i=1}^r r^{-k_i}\sum_{i=2}^r |A_{k,i}|
    &\le 
        \sum_{k_1\ge 0} r^{-k_1}\sum_{i=2}^r \sum_{\substack{k_i\ge 0\\|A_{k,i}|\le |A_{k,1}|}} |A_{k,i}| r^{-k_i} \prod_{\substack{\ell=2\\\ell\neq i}}^r \sum_{k_\ell\ge 0} r^{-k_\ell}\\
        &\le r \left(\frac 1 {1-\frac 1 r}\right)^{r-2} \max_{i=2,\ldots,r}\sum_{\substack{k_1,k_i\ge 0\\|A_{k,i}|\le |A_{k,1}|}} r^{-k_1-k_i}  |A_{k,i}|\,.
    \end{align*}
    Finally, for each $i\in \{2, \ldots, r\}$, 
    split the sum according to whether $k_i\ge k_1$
    or $k_1\ge k_i$:
    \begin{align*}
        \sum_{\substack{k_1,k_i\ge 0}} r^{-k_1-k_i}  \min(|A_{k,1}|,|A_{k,i}|) &\le \sum_{k_1\ge 0} r^{-2k_1} |A_{k,1}| \sum_{k_i\ge k_1} r^{-(k_i-k_1)} + \sum_{k_i\ge 0} r^{-2k_i} |A_{k,i}| \sum_{k_1\ge k_i} r^{-(k_1-k_i)}\\
        &\le 2 \cdot r^2 \cdot \frac 1 {1-\frac 1 r}\,.
    \end{align*}
    Using $\left(\frac {r} {r-1}\right)^{r-1}\le 3$, we get that
    the
    sum of the coefficients is
    at most $\frac 1 {2r}$, as desired.
\end{proof}

%% file: PositivePartVJ26.tex
\subsection{Positive part: union bound \texorpdfstring{(proof of Lemma~\ref{lem:positivePart})}{}}
\label{sec:proofLast}

Having constructed a witness set for $\calX_P$, it remains to apply a union bound over the witnesses.

\begin{lemma}\label{lem:0tail}
    Let $T$ be as in Lemma~\ref{lem:positivePart}. For all $t\ge 1$,
    \[
        \Pr\left(\max_{W\in \calW_0} \Iprod{T, W}\gtrsim \frac {tLr} \eps\right)\le n^{-t}\,.
    \]
\end{lemma}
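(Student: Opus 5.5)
A union bound over $\calW_0$ using the Chernoff bound (Lemma~\ref{lemma:chernoff}) should suffice. Fix non-empty $A_1,\ldots,A_r$ satisfying the two constraints in the definition of $\calW_0$. Write $a_i\defeq|A_i|$ and $N\defeq\prod_i a_i$. The corresponding witness is $W=2\prod_i\phi(A_i)^{-1}\bigotimes_i\mathbf 1_{A_i}$. Hence $\iprod{T,W}=\frac{2}{\Phi}S$, where $\Phi\defeq\prod_i\phi(A_i)$ and $S\sim\mathrm{Bin}(N,p)$ with $p\defeq Ln^{-1-\eps}$. The event $\iprod{T,W}\ge C tLr/\eps$ is the event $S\ge s\defeq \frac{C}{2}\frac{tLr}{\eps}\Phi$.

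The first step is to check that $s\ge e^2Np$ with room to spare, so that the ratio $s/(Np)$ is polynomially large in $n$. By the second constraint and~\eqref{eq:lowerBoundPhi},
\[
Np\le r^{2r}n^{1-\eps}L\log^2 n,\qquad \Phi\ge \sqrt N/\log^{2r}n .
\]
Using~\eqref{eq:rAssumption}, we have $r^{2r}\log^{2r}n\le n^{\eps/2}$ roughly, since $r\log\log n\le \eps\log n/5$. Combined with $N\le r^{2r}n^2\log^2n$, this gives
\[
\frac{s}{Np}\gtrsim \frac{t\,\Phi}{N n^{-1-\eps}}\ge \frac{t\, n^{1+\eps}}{\sqrt N\log^{2r}n}\ge n^{\eps/4}.
\]
I expect this to be slightly delicate and may need the constant $5$ in~\eqref{eq:rAssumption}. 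Lemma~\ref{lemma:chernoff} then gives
\[
\Pr(S\ge s)\le \exp\Bigl(-\tfrac s2\log\tfrac{s}{Np}\Bigr)\le \exp\Bigl(-c\,C\tfrac{tLr}{\eps}\,\Phi\cdot\eps\log n\Bigr)=n^{-cCtLr\Phi}.
\]

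The second step, which is the main obstacle, is the counting. For prescribed sizes $a_1,\ldots,a_r$, the number of choices is at most
\[
\prod_i\binom n{a_i}\le \exp\Bigl(\sum_i a_i\log n\Bigr)\le \exp\bigl(r\Phi\log n\bigr)=n^{r\Phi},
\]
where the middle inequality is exactly the first constraint of $\calW_0$, namely $\sum_i a_i\le r\Phi$. This constraint is precisely what makes the exponent $r\Phi\log n$ of the entropy comparable to the Chernoff exponent $CtLr\Phi\log n$. Taking $C$ large and using $t,L\ge1$, each size profile therefore contributes at most $n^{-(C'-1)tr\Phi}$. The number of size profiles is at most $n^r$.

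For the last step I would use the bound
\[
r\Phi\ge r\prod_i\frac{1}{4r^2}\cdot\sqrt{\text{stuff}},
\]
but this lower bound might be tiny. I would instead use that $\sum_i a_i\ge r$ together with the first constraint to get $r\Phi\ge \frac1r\sum_i a_i\cdot r\ge r$, that is, $\Phi\ge1$. This yields $n^{-(C'-1)tr}$ per profile, so the sum over profiles is at most $n^{r-(C'-1)tr}\le n^{-t}$ for $t\ge1$ and $C$ large.

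A sharper treatment may sum over profiles weighted by $n^{-\Omega(t\sum_i a_i)}$, which handles the count cleanly. This summation over profiles, combined with keeping the ratio $s/(Np)$ polynomially large in the first step, is where care is needed.
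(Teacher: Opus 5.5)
Your proposal is correct and follows essentially the paper's route. Each witness gets a Chernoff bound whose ratio $s/(Np)$ is polynomially large in $n$ by the second constraint of $\calW_0$ and \eqref{eq:lowerBoundPhi}. The union bound then uses the first constraint $\sum_i|A_i|\le r\prod_i\phi(A_i)$ so that the Chernoff exponent pays for the entropy $\prod_i\binom{n}{|A_i|}$. The only difference is bookkeeping: the paper rewrites the exponent as $\tfrac{3t}{20}\sum_i|A_i|\log n$ and invokes Lemma~\ref{lemma:subsetunionboundseries}, whereas you rewrite the entropy as $n^{r\Phi}$ and use $\Phi\ge 1$ with a crude count of $n^r$ size profiles.

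One aside is inaccurate: $r^{2r}\log^{2r}n\le n^{\eps/2}$ does not follow from \eqref{eq:rAssumption}. Your chain never needs it; it only needs $r^r\log^{2r+1}n\le n^{7\eps/10}$, which does hold via $r^r\le\log^r n\le n^{\eps/5}$ and $\log n\le n^{\eps/10}$ (the latter from $r\ge 2$).
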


\begin{proof}
    Let $A_1, \ldots, A_r \subseteq [n]$ satisfy the
    conditions in the definition of
    $\calW_0$.
    We apply the Chernoff bound (Lemma~\ref{lemma:chernoff}):
    for all sufficiently large $t$,
    \begin{align*}
        \Pr\left(\Iprod{T, \bigotimes_{i\in [r]} \frac {\mathbf 1_{A_i}} {\phi(A_i)}} > \frac {tLr} \eps\right)&\le \exp\left(-\frac {tr \prod\limits_{i=1}^r \phi(A_i)}{2\eps} \log\left(\frac { tLr \prod\limits_{i=1}^r \phi(A_i)} { \eps  n^{-1-\eps}\prod\limits_{i=1}^r |A_i| }\right)\right)\,.
    \end{align*}
    Note that the assumption
    of Lemma~\ref{lemma:chernoff}
    is verified, as
    \[
        \frac{tLr \prod\limits_{i=1}^r \phi(A_i)} {\eps  n^{-1-\eps} \prod\limits_{i=1}^r |A_i|} \overset{\eqref{eq:lowerBoundPhi}} \ge \frac {tr n^{1+\eps}} \eps \cdot \frac {1} {\log^{2r} n\prod\limits_{i=1}^r \sqrt {|A_i|} }\ge  {t n^{\eps}}  \cdot \frac {1} {r^r \log^{2r+1} n}\overset{\eqref{eq:rAssumption}}\ge tn^{\frac {3\eps} {10}}\,.
    \]
    Therefore,
    the assumption holds provided
    that $t$ is sufficiently large. 
    Next, using the assumptions on $A_i$,
    we can simplify the upper bound to
    \begin{align*}
        \Pr\left(\Iprod{T, \bigotimes_{i\in [r]} \frac {\mathbf 1_{A_i}} {\phi(A_i)}} > \frac{tLr}\eps\right)&\le \exp\left(-\frac {t\sum\limits_{i=1}^r |A_i|} {2\eps} \log\left(t n^{\frac{3 \eps}{10} }\right)\right)\\
        &\le \exp\left(- \frac {3t} {20} \sum\limits_{i=1}^r |A_i|\log n\right)\,,
    \end{align*}
    using the same estimate as above.    
    Finally, the number of witnesses in $\calW_0$ corresponding to a prescribed $r$-tuple of set sizes $|A_1|, \ldots, |A_r|$ is at most $\prod_{i=1}^r \binom n {|A_i|}$, so we can take a union bound and use Lemma~\ref{lemma:subsetunionboundseries} provided that $t$ is large enough. Increasing the implicit universal constant extends the result to all $t\ge 1$.
\end{proof}

\begin{lemma}\label{lem:jTail}
    Let $j\in [r]$ and $T$ be as in Lemma~\ref{lem:positivePart}.
    Then, for any $t>0$,
    \[
        \Pr\left(\max_{W\in \calW_j} \Iprod{T, W}\gtrsim \frac {Lr^3} \eps(r+t)\right)\le n^{-t}\,.
    \]
\end{lemma}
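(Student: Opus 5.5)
Without loss of generality take $j=1$, and write $W = \mathbf 1_{[n]}\otimes \bigotimes_{i\ge 2}\mathbf 1_{A_i}/D(A)$, where
\[
D(A) \defeq \frac 1{24r^4}\sum_{i\ge 2}|A_i| + \frac 1{4r\log^4 n}\prod_{i\ge 2}\phi(A_i)^2 .
\]
Let $S(A)\defeq \iprod{T,\mathbf 1_{[n]}\otimes\bigotimes_{i\ge2}\mathbf 1_{A_i}}$. This is a sum of $N = n\prod_{i\ge 2}|A_i|$ independent $\textnormal{Ber}(Ln^{-1-\eps})$ variables, with mean $\mu = L n^{-\eps}\prod_{i\ge2}|A_i|$.

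I would apply a union bound over all tuples $(A_2,\ldots,A_r)$, grouped by their sizes $(a_2,\ldots,a_r)$. The goal is to show
\[
\Pr\bigl(S(A)> s\, D(A)\bigr) \le \exp\Bigl(-c\,(1+t)\sum_{i\ge2} a_i\log n\Bigr),\qquad s\defeq C\frac{Lr^3}{\eps}(r+t).
\]
Summing $\prod_i\binom n{a_i}\le \exp(\sum a_i\log n)$ against this bound via Lemma~\ref{lemma:subsetunionboundseries} then gives total probability at most $n^{-t}$. The threshold is large:
\[
sD(A) \ge \frac{CL(r+t)}{24\eps r}\sum_{i\ge2}a_i .
\]

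The work is to compare $sD(A)$ with $\mu$ and apply the Chernoff bound (Lemma~\ref{lemma:chernoff}) with $\log(sD/\mu)\gtrsim \eps\log n$. I would split into two regimes according to which term of $D$ dominates.

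\textbf{Regime 1: $\prod_{i\ge2}a_i$ is small,} namely $\prod_{i\ge 2} a_i \le n^{\eps/2}\sum_{i\ge 2}a_i$. Then $\mu \le Ln^{-\eps/2}\sum a_i$, which is much smaller than $sD$. The ratio $sD/\mu \ge n^{\eps/2}/\mathrm{poly}(r)\ge n^{\eps/4}$ by~\eqref{eq:rAssumption}. Chernoff then gives exponent
\[
\tfrac{sD}2\cdot\tfrac{\eps}4\log n \gtrsim L(r+t)\sum a_i\log n/r .
\]
This is enough, with the $r$ and $r^3$ factors absorbing constants.

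\textbf{Regime 2: $\prod_{i\ge 2}a_i$ is large.} Here I would use the second term of $D$. By~\eqref{eq:lowerBoundPhi},
\[
\prod_{i\ge2}\phi(A_i)^2\ge \frac{\prod_{i\ge 2} a_i}{\log^{4(r-1)}n}.
\]
This term gives $sD \ge \frac{CL(r+t)r^2}{4\eps}\prod a_i\,\log^{-4r}n$. However, $\mu = Ln^{-\eps}\prod a_i$, so the ratio is $sD/\mu \gtrsim n^{\eps}\log^{-4r}n \ge n^{\eps/5}$, again by~\eqref{eq:rAssumption}. The Chernoff exponent is then $\gtrsim sD\cdot\eps\log n$. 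To dominate the entropy $\sum a_i\log n$, I would use the first term of $D$ for that purpose: since $sD \ge s\sum a_i/(24r^4)$ anyway, I can take $\log(sD/\mu)\ge \frac{\eps}{5}\log n$ from the second term and the size $sD$ from the first. The resulting exponent is $\gtrsim L(r+t)\sum a_i\log n/r$ as before.

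In fact this shows the split is unnecessary. $D$ is a sum, so $sD \ge s\cdot(\text{first term})$ controls the prefactor, while $sD\ge s\cdot(\text{second term})$ always ensures $sD/\mu\ge n^{\eps/5}$. This holds as long as the $\log^{-4}n$ losses from $\phi$ are controlled, using~\eqref{eq:rAssumption}, which gives $\log^{4r}n\le n^{4\eps/5}$. So a single Chernoff application suffices, and it also verifies the hypothesis $sD\ge e^2\mu$.

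\textbf{Main obstacle.} The delicate step is exactly this bookkeeping. I must check that $\log^{4r} n\cdot r^{O(1)}\le n^{4\eps/5}$ under~\eqref{eq:rAssumption}; this uses $r\log\log n\le \eps\log n/5$. I must also check that the factor $\eps$ in $\log(sD/\mu)\ge c\eps\log n$ is cancelled by the $1/\eps$ in $s$, and that the resulting $(r+t)$ prefactor beats the union bound count uniformly over all size profiles. In particular, the $r$ in $r+t$ is needed to pay for the $(r-1)$-fold sum over sizes in Lemma~\ref{lemma:subsetunionboundseries}. Small $t$ is handled by monotonicity, since the bound for $t$ of order $1$ already follows from the $r$ term.
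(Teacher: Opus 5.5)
Your final argument matches the paper's proof: one Chernoff bound per tuple, with the second term of the denominator giving $\log(sD/\mu)\ge\frac{\eps}{5}\log n$ via \eqref{eq:lowerBoundPhi} and \eqref{eq:rAssumption}, and the first term giving an exponent of order $\frac{r+t}{r}\sum_{i\ge 2}|A_i|\log n$, followed by Lemma~\ref{lemma:subsetunionboundseries}; the two-regime split is indeed unnecessary. One small correction: that exponent does not reach your stated intermediate target $c(1+t)\sum_{i\ge2}|A_i|\log n$ when $r$ is large, but it still suffices if you split it, as the paper does, into $c\sum_{i\ge 2}|A_i|\log n + c\,t\log n$ using $\sum_{i\ge 2}|A_i|\ge r-1\ge r/2$ (or apply the lemma directly with $\lambda\asymp (r+t)/r$), which yields $n^{-t}$ for every $t>0$ without any monotonicity argument.
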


\begin{proof}
    Without loss of generality, assume $j=1$. Let $A_2, \ldots, A_r\subseteq [n]$ and denote by
    \[
        \Delta \defeq \frac 1{24r^4} \sum_{i=2}^r |A_i| + \frac 1{4r\log^4 n}  \prod_{i=2}^r \phi(A_i)^2\,.
    \]
    Then, by the Chernoff bound (Lemma~\ref{lemma:chernoff}), we have, for any $t>0$ and for a sufficiently large universal constant $C$,
    \[
        \Pr\left(\Iprod{T, \frac {\mathbf 1_{[n]} \otimes \bigotimes\limits_{i=2}^r \mathbf 1_{A_i}} {\Delta}} \ge C\frac {Lr^3} \eps \left(r +t\right) \right)\le \exp\left(-\frac {C r^3(r+t) \Delta} {2\eps} \log \left(\frac {C\Delta Lr^3(r+t)} {\eps n^{- \eps}\prod\limits_{i=2}^r  |A_i|} \right) \right)\,.
    \]
    Note that the assumption of Lemma~\ref{lemma:chernoff} is verified, as
    \[
        \frac {C\Delta Lr^3(r+t)} {\eps n^{-\eps}\prod\limits_{i=2}^r  |A_i|}\overset{\eqref{eq:lowerBoundPhi}} {\ge} \frac {Cr^2(r+t) n^{\eps}} {4\eps\log^{4r} n}\overset{\eqref{eq:rAssumption}}{\ge} \frac {Cn^{\frac \eps5}} {4}\,.
    \]
    For $C$ sufficiently large, this lower bound exceeds both a universal constant and  $n^{\frac \eps5}$. Therefore,
    the above tail bound implies
    \begin{align*}
        \Pr\left(\Iprod{T, \frac {\mathbf 1_{[n]} \otimes \bigotimes\limits_{i=2}^r \mathbf 1_{A_i}} {\Delta}}\ge  C\frac {Lr^3} \eps(r+t) \right)&\le n^{-\frac {C} {10}r^3(r+t) \Delta}
        \le n^{-\frac {C} {240} \sum\limits_{i=2}^r |A_i|} \cdot n^{-\frac {C t} {480}}\,.
    \end{align*}
    Finally, choosing $C$ sufficiently large,
    we can take a union bound over $\calW_j$, and use Lemma~\ref{lemma:subsetunionboundseries} as in the previous lemma to 
    obtain the desired statement.
\end{proof}

\begin{proof}[Proof of Lemma~\ref{lem:positivePart}]
    By Lemma~\ref{lem:solidContain},
    we have
    \[
        \sup_{X\in \calX_P} \iprod{T, X}\le \max_{j\in \{0, \ldots, r\}} \max_{W\in \calW_j} \iprod{T, W}\,.
    \]
    Taking a union bound over all $j\in \{0, \ldots, r\}$ and using Lemmas~\ref{lem:0tail} and~\ref{lem:jTail}, we obtain, for every $t>0$,
    \[
        \Pr\left(\sup_{X\in \calX_P} \iprod{T, X}\gtrsim\frac {Lr^3} \eps(r+t)\right)\le (r+1)n^{-t}\,.
    \]
    Finally, using~\eqref{eq:rAssumption},
    the factor $r+1$ can be absorbed
    into the implicit constant after shifting $t$.
\end{proof}

%% file: ExtensionsVJ26.tex
\section{Extensions to arbitrary order and unbounded entries}
\label{sec:extensions}

In this section, we give extensions of Theorem~\ref{thm:mainTensorProof}.

First, if the entries of the random tensor take values in $[-M, M]$ rather than $[-1, 1]$, rescaling yields $\E \normin{T - \E T} \lesssim MLr^4/\eps$ under otherwise identical assumptions.
Moreover, by padding the dimensions, the theorem also applies to tensors $T\in \R^{n_1 \times \dots \times n_r}$, by setting $n = \max(n_1, \ldots, n_r)$.

Next, we combine Theorem~\ref{thm:mainTensorProof} with the following recent result of Boedihardjo~\cite{marchTensor} to
obtain a bound for arbitrary
$r$, without assuming~\eqref{eq:rAssumption},
at the cost of a slightly worse
dependence on $r$ and $\eps$.

\begin{theorem}[{\cite[Corollary 1.4]{marchTensor}}]\label{thm:march}
    Let $T\in (\R^n)^{\otimes r}$
    be a random tensor with 
    independent, mean-zero entries taking
    values in $[-K,K]$. Then,
    \[
        \E \normin{T}\lesssim \sqrt r \sum_{k=1}^r \max_{\substack{i_1, \ldots, i_{k-1}, i_{k+1}, \ldots, i_r\in [n]}} \left(\sum_{i_k=1}^n \E T^2_{i_1, \ldots, i_r}\right)^{\frac 12} + K r^3 \log^2 n\,.
    \]
    Moreover, for all $t\ge 1$,
    \[
        \Pr \left(\abs{\normin{T}- \E \normin{T}} \gtrsim Kt\right) \le e^{-t^2}\,.
    \]
\end{theorem}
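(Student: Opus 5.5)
The concentration part and the expectation bound need different tools, and I would handle the concentration part first because it is soft. The map $T\mapsto\normin{T}$ is a supremum of linear functionals $T\mapsto\Iprod{T,X}$ over $X\in\calX$. Since every $X\in\calX$ satisfies $\norm{X}_2=1$, this map is convex and $1$-Lipschitz with respect to the Euclidean (Frobenius) norm on $\R^{n^r}$. The entries of $T$ are independent and take values in $[-K,K]$, so Talagrand's concentration inequality for convex Lipschitz functions of bounded independent variables applies. It gives $\Pr(\abs{\normin{T}-\operatorname{med}\normin{T}}>s)\le 4e^{-s^2/(16K^2)}$. Replacing the median by the mean costs only an additive $O(K)$, which yields the tail bound $\Pr(\abs{\normin{T}-\E\normin{T}}\gtrsim Kt)\le e^{-t^2}$ for $t\ge 1$.

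For the expectation bound, I would first reduce to a Gaussian problem.
\begin{itemize}
\item Symmetrize to get $\E\normin{T}\le 2\E\normin{\varepsilon\circ T}$ with independent Rademacher signs $\varepsilon$.
\item Condition on $|T|$ and view $\varepsilon\circ T$ as a Rademacher process indexed by $\calX$.
\end{itemize}
I would then split the index set. Let $\sigma_*$ denote the first (maximal slice-variance) term on the right-hand side.
\begin{itemize}
\item Entries with $|T_i|$ below a threshold of order $\sigma_*/\sqrt{\log n}$ go into a Gaussian-comparable part.
\item The remaining entries are few per slice and have size at most $K$. I would bound them crudely through a flattening or net argument, which contributes the $Kr^3\log^2 n$ term.
\end{itemize}

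For the Gaussian-comparable part, I would pass to a Gaussian tensor $G$ with the same variance profile, either by contraction of Rademacher into Gaussian averages or by a moment comparison. I would then bound $\E\normin{G}$ by iterating Slepian/Chevet-type comparisons one mode at a time. Fixing $x_2,\dots,x_r$, the process in $x_1$ is Gaussian with variance controlled by $\max_{i_2,\dots,i_r}\sum_{i_1}\E T_i^2$. Peeling off the modes in this way produces the sum over $k$ of the maximal slice variances, and the $\sqrt r$ factor comes from comparing the Gaussian process on $\calX$ with a sum of $r$ independent processes, one per factor.

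The main obstacle is this Gaussian step. Naive Slepian comparison for the injective norm of a non-homogeneous Gaussian tensor loses factors like $\sqrt{\log n}$ or $\sqrt n$, because the covariance of $\Iprod{G,x_1\otimes\cdots\otimes x_r}$ mixes all modes. I would first try an interpolation argument in the style of Bandeira--van Handel. This means using a Gaussian comparison that controls the increments $\E(\Iprod{G,X}-\Iprod{G,Y})^2$ by $\sum_k\sigma_k^2\norm{x_k-y_k}_2^2$ through a Cauchy--Schwarz on each slice. Such a comparison is exactly what is needed to apply Sudakov--Fernique against the simple process $\sum_k\sigma_k\Iprod{g_k,x_k}$. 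Controlling the higher-order cross terms in this increment bound, where several factors change simultaneously, is where I expect the polylogarithmic remainder, and most of the work, to lie.
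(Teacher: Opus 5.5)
The paper gives no proof of this statement; it imports it from Boedihardjo, so your argument has to stand on its own. Your concentration half is correct: $T\mapsto\normin{T}$ is convex and $1$-Lipschitz for the Frobenius norm, Talagrand's inequality gives deviations of order $K$ around the median, and the median and mean differ by $O(K)$. The symmetrization and the Rademacher-to-Gaussian contraction are also fine.

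\textbf{The Gaussian step fails.} This step carries the whole content of the cited result, and the comparison you propose cannot give it. First, as written it loses a factor $\sqrt n$:
\[
\E\sup_{\|x_k\|_2=1}\sum_k\sigma_k\iprod{g_k,x_k}=\sum_k\sigma_k\E\|g_k\|_2\asymp\sqrt n\sum_k\sigma_k ,
\]
which is off by a factor of order $\sqrt n$ even for a homogeneous profile, where $\sigma_k$ already contains the $\sqrt n$. A Cauchy--Schwarz increment bound is only as good as the largest entry standard deviation $\sigma^*$, via $\E(\iprod{G,X}-\iprod{G,Y})^2\le(\sigma^*)^2\|X-Y\|_2^2$. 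So this route returns only the trivial net bound $\mathrm{poly}(r)\,\sigma^*\sqrt n$. Your mode-by-mode estimate $\E\sup_{x_1}\lesssim\sigma_1$ holds for fixed $x_2,\ldots,x_r$, but not uniformly over them.

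Second, no other covariance rescues a comparison with a process $Y=\sum_k\iprod{h_k,x_k}$. Suppose its increments dominate those of $G$, whose entries have variances $b_i^2$. Testing at the points $e_i^{\otimes r}$ gives $\E(u_i-u_j)^2\ge b_{i\ldots i}^2+b_{j\ldots j}^2$ for $u=\sum_k h_k$. Now take the block-diagonal matrix with $\sqrt n$ blocks of size $m=\sqrt n$ and entry variance $1/m$. This is exactly what your reduction produces from Rademacher entries $\pm n^{-1/4}$, all of which lie below your threshold. Summing over pairs gives $n\,\E\|u\|_2^2\ge n(n-1)/m$, hence $\E\sup Y\ge\E\|u\|_2\gtrsim n^{1/4}$. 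Yet $\sigma_1=\sigma_2=1$ and the true norm is $O(1)$. So the mechanism producing $\sum_k\sigma_k$ with only a $\sigma^*\,\mathrm{polylog}(n)$ error is missing. Bandeira--van Handel does not supply it: their argument is a trace-moment comparison, and, as the paper's overview notes, no trace method is available for tensors.

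\textbf{The large-entry part is not sound for $r\ge 3$.} Few nonzeros per fiber does not control the injective norm. The tensor $A_{ij\ell}=\mathbf 1[\ell\equiv i+j \bmod n]$ has exactly one nonzero per fiber, yet testing with $\mathbf 1/\sqrt n$ in every mode gives $\normin{A}\ge\sqrt n$. Flattening to an $n\times n^{r-1}$ matrix creates rows with polynomially many nonzeros. A net argument at this density is a Kahn--Szemer\'edi-type analysis like the one in this paper, not a crude step.

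The split is unnecessary anyway. After conditioning on $|T|$, contraction handles all entries at once. Suppose you had a Gaussian bound of the form $\sqrt r\sum_k\sigma_k(b)+\mathrm{poly}(r)\,\mathrm{polylog}(n)\max_i b_i$. Applying it with $b=|T|$, together with Bernstein and a union bound over the $rn^{r-1}$ fibers (giving random slice norms $\lesssim\sigma_k+K\sqrt{r\log n}$), would yield a bound of the stated form. Everything therefore reduces to that Gaussian theorem for an arbitrary variance profile, and your proposal leaves it unproven.
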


\noindent In the setting of Theorem~\ref{thm:mainTensorProof}, Theorem~\ref{thm:march} does not give a
dimension-free bound. However, it applies to arbitrary $r$, independent of $n$ and $\eps$.

\begin{corollary}[Extension of Theorem~\ref{thm:mainTensorProof} to arbitrary $r$]\label{corollary:unconstrainedorder}
    Let $n,r\ge 2$ be integers,
    and let $\eps\in (0,1)$.
    Let $T\in (\R^n)^{\otimes r}$ be
    a random tensor with independent
    entries in $[-1,1]$
    and such that for some $L\ge 1$, we have $\Pr\left(T_i\neq 0\right)\le Ln^{-1-\eps}$ for all $i\in [n]^r$. Then,
    \[
        \E \normin{T - \E T}\lesssim \frac {Lr^5} {\eps^2}\log^2 \frac r \eps\,.
    \]
    Moreover, for any $t>0$,
    \[
        \Pr\left(\normin{T - \E T}\gtrsim \frac {Lr^3} {\eps}\left( \frac {r^2} {\eps}\log^2 \frac r \eps + t\right)\right)\le n^{-t}\,.
    \]
\end{corollary}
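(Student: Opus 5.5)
Split into two regimes according to whether assumption~\eqref{eq:rAssumption} holds. If $r\le \frac{\eps\log n}{5\log\log n}$, Theorem~\ref{thm:mainTensorProof} applies directly. It gives $\E\normin{T-\E T}\lesssim Lr^4/\eps$, which is below the claimed $\frac{Lr^5}{\eps^2}\log^2\frac r\eps$. It also gives the tail bound $\Pr(\normin{T-\E T}\gtrsim \frac{Lr^3}{\eps}(r+t))\le n^{-t}$, and since $r\le \frac{r^2}{\eps}\log^2\frac r\eps$ (up to constants, once $r\ge 2$), this implies the stated tail bound.

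In the complementary regime, $\frac{\eps\log n}{5\log\log n}< r$. Here I would use Theorem~\ref{thm:march} applied to the centered tensor $T-\E T$. Its entries are independent and mean-zero, take values in $[-2,2]$, and have variance at most $\E T_i^2\le Ln^{-1-\eps}$. Hence every slice-variance term satisfies $\sum_{i_k}\E T_i^2\le Ln^{-\eps}\le L$. The first term is then at most $\sqrt r\cdot r\sqrt L\le Lr^{3/2}$, and the second term is $O(r^3\log^2 n)$. So it suffices to bound $\log n$ in terms of $r/\eps$. From $\log n< \frac{5r}{\eps}\log\log n$, the standard inversion (if $x<a\log x$ then $x\lesssim a\log a$) gives $\log n\lesssim \frac r\eps\log\frac r\eps$. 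Therefore $r^3\log^2 n\lesssim \frac{r^5}{\eps^2}\log^2\frac r\eps$, which proves the expectation bound since $L\ge1$. This inversion is the only mildly delicate step. One must make sure it is carried out with $\log\log n$ correctly handled when $n$ is small, but for $n$ bounded everything is trivially $O(r^{3})$ anyway.

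For the tail bound in this second regime, I would use the concentration part of Theorem~\ref{thm:march} with $K=2$. It gives $\normin{T-\E T}\le \E\normin{T-\E T}+O(s)$ with probability at least $1-e^{-s^2}$. Choose $s$ so that $e^{-s^2}\le n^{-t}$, i.e.\ $s=\sqrt{t\log n}$. Then $s\le t+\log n$, and $\log n\lesssim \frac r\eps\log\frac r\eps\lesssim \frac{r^2}{\eps}\log^2\frac r\eps$ in this regime. So the deviation is absorbed into $\frac{Lr^3}{\eps}\bigl(\frac{r^2}{\eps}\log^2\frac r\eps+t\right)$ after adjusting constants; for $t<1$ the claim follows from the $t=1$ case by monotonicity. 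Combining the two regimes yields the corollary.
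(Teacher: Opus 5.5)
Your proposal is correct and follows essentially the same route as the paper. Both split according to whether~\eqref{eq:rAssumption} holds, apply Theorem~\ref{thm:march} in the complementary regime with $\log n\lesssim \frac r\eps\log\frac r\eps$ (which the paper writes as $n\le (r/\eps)^{O(r/\eps)}$), and get the tail bound by absorbing the $\sqrt{t\log n}$ deviation from the concentration part of Theorem~\ref{thm:march}. The one detail the paper handles explicitly and you gloss over is the hypothesis $n\ge 5$ of Theorem~\ref{thm:mainTensorProof}: for example, $n=3$, $r=2$ with $\eps$ close to $1$ satisfies~\eqref{eq:rAssumption}. The paper disposes of this by rescaling $L$, and you could just as easily apply Theorem~\ref{thm:march} for such small $n$.
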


\begin{proof}
   Up to rescaling $L$, we assume that $n\ge 5$ without loss of generality.
   If $r$ satisfies~\eqref{eq:rAssumption},
    the result follows from Theorem~\ref{thm:mainTensorProof}.
    Suppose instead that $r>\frac {\eps \log n} {5\log \log n}$. By 
    assumption, for any  $k\in [r]$ and $i_1, \ldots, i_{k-1}, i_{k+1}, \ldots, i_r\in [n]$, we have
    \[
        \left(\sum_{i_k=1}^n \Var T_{i_1, \ldots, i_r}\right)^{\frac 12}\le \sqrt Ln^{-\frac \eps 2}\,.
    \]
    Applying Theorem~\ref{thm:march}
    gives
    \[
        \E \normin{T - \E T}\lesssim \sqrt {L} r^{\frac 32} n^{-\frac \eps 2} + r^3 \log^2 n\,.
    \]
    The first term is bounded by the target bound. For the second term, the assumption on $r$ gives $n\le (r/\eps)^{O(r/\eps)}$, so $r^3 \log^2 n\lesssim \frac {r^5} {\eps^2} \log^2 \frac r \eps$. For the tail bound, the expectation bound with an additive term $O(\sqrt{ t\log n}) \lesssim (t+1)r/\eps$ holds with probability at least $1-n^{-t}$.
\end{proof}

\noindent The almost sure
boundedness assumption on the entries
can also be removed via a truncation argument.

\begin{corollary}[Extension of Theorem~\ref{thm:mainTensorProof} to unbounded entries]\label{corollary:unboundedsparsetensor}
    Consider the setting of Corollary~\ref{corollary:unconstrainedorder}, except that the entries of $T$ may be unbounded.
    Let
    \begin{equation}
        C(r,\eps) \defeq \begin{cases}\frac {r^4} \eps & \text{ if \eqref{eq:rAssumption} is satisfied,}\\\frac{r^5}{ \eps^2} \log^2 \frac r \eps & \text{ otherwise.}\end{cases}\label{eq:constantCorollary}
    \end{equation}
    Then,
    \[
        \E \normin{T-\E T}\lesssim C(r,\eps) L \E \Normi{T}\,.
    \]
    Moreover, for any $t>0$ and $M \ge 2\E \Normi{T}$,
    \[
        \Pr \left( \normin{T-\E T} \gtrsim   LM\left(C(r,\eps) + \frac{r^{3}t}{\eps} \right)\right) \leq n^{- t} + \Pr\left( \Normi{T} > M\right)\,.
    \]
\end{corollary}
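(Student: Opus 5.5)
We prove Corollary~\ref{corollary:unboundedsparsetensor} by truncation at a level $M$, applying the bounded results (Theorem~\ref{thm:mainTensorProof} or Corollary~\ref{corollary:unconstrainedorder}, according to whether~\eqref{eq:rAssumption} holds, which is exactly how $C(r,\eps)$ is defined) to the truncated tensor, and controlling the remainder separately. Fix $M>0$ and write $T = T^{\le} + T^{>}$ with $T^{\le}_i \defeq T_i\mathbf 1_{|T_i|\le M}$ and $T^{>}_i \defeq T_i \mathbf 1_{|T_i|>M}$. Then $T^{\le}$ has independent entries in $[-M,M]$ and satisfies $\Pr(T^{\le}_i\ne 0)\le Ln^{-1-\eps}$. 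By the rescaling remark at the start of Section~\ref{sec:extensions}, $\normin{T^{\le}-\E T^{\le}}\lesssim LM(C(r,\eps)+r^3t/\eps)$ with probability at least $1-n^{-t}$, and $\E\normin{T^{\le}-\E T^{\le}}\lesssim C(r,\eps)LM$.

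\textbf{Tail bound.} Take $M\ge 2\E\Normi{T}$. On the event $\{\Normi{T}\le M\}$ we have $T=T^{\le}$, hence
\[
\normin{T-\E T}\le \normin{T^{\le}-\E T^{\le}}+\normin{\E T^{>}}.
\]
The tail bound follows once we show $\normin{\E T^{>}}\lesssim LM$. Bound the injective norm by the Frobenius norm: $\normin{\E T^{>}}\le \|\E T^{>}\|_2$, and $|\E T^{>}_i|\le \E|T_i|\mathbf 1_{|T_i|>M}$. A crude $\sqrt{n^r}$ factor will not suffice here, so we use Lemma~\ref{lem:solidContain}-type sparsity instead. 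The key is the \emph{global} bound
\[
\sum_i \E|T_i|\mathbf 1_{|T_i|>M}\le \int_M^\infty \sum_i\Pr(|T_i|>s)\,\diff s .
\]
With $M\ge 2\E\Normi{T}$, independence gives that $\sum_i \Pr(|T_i|>s)\lesssim \Pr(\Normi{T}>s)$ whenever this probability is at most $1/2$ (the standard inequality $1-\prod_i(1-p_i)\ge \tfrac12\min(1,\sum_i p_i)$). Since $\Pr(\Normi{T}>s)\le \E\Normi{T}/s\le 1/2$ for $s\ge M$, we get $\sum_i \E|T_i|\mathbf 1_{|T_i|>M}\lesssim \E\Normi{T}\mathbf 1_{\Normi{T}>M}\le \E\Normi{T}$. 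Finally, for unit vectors,
\[
|\iprod{\E T^{>},x_1\otimes\cdots\otimes x_r}|\le \|\E T^{>}\|_1\max_i|(x_1\otimes\cdots\otimes x_r)_i|\le \|\E T^{>}\|_1,
\]
so $\normin{\E T^{>}}\lesssim \E\Normi{T}\le M$. This proves the tail bound.

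\textbf{Expectation bound.} Take $M=2\E\Normi{T}$ and decompose
\[
\normin{T-\E T}\le \normin{T^{\le}-\E T^{\le}}+\normin{T^{>}}+\normin{\E T^{>}}.
\]
The first and third terms are handled as above. For the middle term, bound $\normin{T^{>}}\le \|T^{>}\|_1\le \sum_i|T_i|\mathbf 1_{|T_i|>M}$, whose expectation was just shown to be $\lesssim \E\Normi{T}$. All three terms are therefore $\lesssim C(r,\eps)L\,\E\Normi{T}$, using $L,C\ge 1$.

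\textbf{Main obstacle.} The delicate step is the global tail inequality $\sum_i\E|T_i|\mathbf 1_{|T_i|>M}\lesssim \E\Normi{T}$. This is exactly where independence and the choice $M\ge 2\E\Normi{T}$ enter. The argument requires Markov's inequality to place every level $s\ge M$ in the regime $\Pr(\Normi{T}>s)\le 1/2$, where the sum of marginal tails is comparable to the tail of the maximum.
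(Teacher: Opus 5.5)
Your proof is correct and is essentially the paper's argument: truncate at $M$, apply Theorem~\ref{thm:mainTensorProof} or Corollary~\ref{corollary:unconstrainedorder} (rescaled by $M$) to $T^{\le}$, and control $T^{>}$ and $\E T^{>}$ through their $\ell_1$ norm, where your tail-comparison argument re-derives Lemma~\ref{lemma:reversetrunc}. There is one slip to fix. The layer-cake formula gives $\E\left[|T_i|\mathbf 1_{|T_i|>M}\right]=M\Pr(|T_i|>M)+\int_M^\infty\Pr(|T_i|>s)\,\mathrm{d}s$, so your displayed inequality drops the boundary term $M\sum_i\Pr(|T_i|>M)$. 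The same comparison bounds that term by $2M\Pr(\Normi{T}>M)$, so the corrected total is at most $2\E\left[\Normi{T}\mathbf 1_{\Normi{T}>M}\right]$, which matches your conclusion.
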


\begin{proof}
    Let $M\ge 2 \E \Normi{T}$ and decompose $T = T^{>M} + T^{\le M} $ where $T^{>M}$ contains the entries of magnitude greater than $M$ and $T^{\le M}$ contains the remaining entries.
    \[
        \normin{T- \E T} \leq  \normin{T^{>M} - \E T^{>M}} + \normin{T^{\le M} - \E T^{\le M}}\,.
    \]
    For the expectation bound, we apply Corollary~\ref{corollary:unconstrainedorder} (or Theorem~\ref{thm:mainTensorProof} if~\eqref{eq:rAssumption} is satisfied) to the second term.
    For the first term, we use the
    crude bound
    \[
        \E \normin{T^{>M} - \E T^{>M}}\lesssim  \E \normin{T^{>M}} \le \E \Norm{T^{>M}}_1\lesssim M\,,
    \]
    where the last inequality
    follows from Lemma~\ref{lemma:reversetrunc} (in the special case $M = 2 \E \Normi{T}$). The tail bound follows similarly,
    using $\Pr\left( T^{> M} \neq 0\right) = \Pr\left( \Normi{T} > M\right)$ and $\normin{ \E T^{> M}} \lesssim M $.
\end{proof}

\section{Tensors with i.i.d.\ entries}
\label{sec:seginer}

In this section, we extend Seginer's theorem (Theorem~\ref{thm:seginerMatrix}) to tensors under a $L^{2+\eps}$-$L^2$ moment equivalence assumption (Assumption~\ref{ass:momentGrowthAssumption}).  
We do not state a tail bound, although similar arguments to the ones in Section~\ref{sec:extensions} yield one. 

\begin{theorem}[Seginer-type theorem for tensors]\label{thm:seginer}
    Let $r\ge 2$ and $n_1, \ldots, n_r\ge 2$ be integers. Let $T \in \R^{n_1 \times \cdots \times n_r}$ be a tensor whose entries are i.i.d.\ copies of a random variable $Z$
    satisfying Assumption~\ref{ass:momentGrowthAssumption} with parameters $\varepsilon\in (0,1)$ and $K\ge 1$.
    Set $M \defeq 2\E \Normi{T}$ and $n \defeq \max_{i\in [r]} n_i$. Then,
    \[
        \sqrt{n \E\left[\clip_M(Z)^2\right]} + M \lesssim \E \normin{T} \lesssim r^3\sqrt{n \E\left[\clip_M(Z)^2\right]} + \frac{r^5 \log^2 \frac {r} \eps }{ \eps^{6} } KM \,.
    \]
\end{theorem}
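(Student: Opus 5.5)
The plan is to split $T$ entrywise at a level $\theta$, handling the bulk with Theorem~\ref{thm:march} and the outliers with Corollary~\ref{corollary:unboundedsparsetensor}. Choose $\theta$ so that $\Pr(|Z|>\theta)\le n^{-1-\eps'}$ for a suitable $\eps'\asymp\eps$, and write $T=T^{\le\theta}+T^{>\theta}$ with $\normin{T}\le \normin{T^{\le \theta}-\E T^{\le\theta}}+\normin{T^{>\theta}-\E T^{>\theta}}$. The centering is free because $\E T^{\le\theta}+\E T^{>\theta}=\E T=0$.

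\textbf{Upper bound, bulk.} For the bounded part, Theorem~\ref{thm:march} with $K=\theta$ gives
\[
\sqrt r\cdot r\sqrt{n\E[\clip_\theta(Z)^2]}+\theta r^3\log^2 n.
\]
To remove the $\log^2 n$ factor, I would take $\theta$ of order $\sigma n^{1/2-\delta}$ rather than something near $M$. Assumption~\ref{ass:momentGrowthAssumption} and Markov's inequality give
\[
\Pr(|Z|>\theta)\le K\sigma^{2+\eps}\theta^{-2-\eps},
\]
so for $\theta=\sigma n^{1/2}\cdot n^{-c\eps}$ the tail is at most $Kn^{-1-\eps/2+O(\eps^2)}$. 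Then $\theta\log^2 n\ll\sigma\sqrt n$, which is absorbed by the first term. We also need $\sigma\lesssim_{K,\eps}\sqrt{\E\clip_M(Z)^2}$, or in the other direction that the variance of the truncation at $\theta$ is comparable to $\E\clip_M(Z)^2$. This follows from the moment assumption, since truncation at a level larger than a constant multiple of $\sigma$ keeps a constant fraction of the variance. The two regimes $M\le\theta$ and $M>\theta$ need separate bookkeeping. If $\log^2 n$ still causes trouble for small $n$, I would fall back on the same case analysis on $r$ versus $\log n/\log\log n$ used in Corollary~\ref{corollary:unconstrainedorder}.

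\textbf{Upper bound, outliers.} For $T^{>\theta}$, each entry is nonzero with probability at most $Kn^{-1-\eps'}$. Corollary~\ref{corollary:unboundedsparsetensor} with $L=K$ bounds the contribution by $C(r,\eps')K\,\E\Normi{T^{>\theta}}\le C(r,\eps')KM$. The stated powers $\eps^{-6}$ and $r^5\log^2(r/\eps)$ come from $C(r,\eps')$ with $\eps'$ a fixed power of $\eps$ (for instance $\eps'\asymp\eps^3$ if the Markov step is done with lossy exponents), together with possible rescaling losses. The main obstacle is arranging the threshold so that simultaneously (i) the sparsity hypothesis holds with some $\eps'=\mathrm{poly}(\eps)$, (ii) the $\theta r^3\log^2 n$ error is dominated, and (iii) the bulk variance is comparable to $\E[\clip_M(Z)^2]$. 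When $n$ is small relative to $\eps$, $K$ and $r$, a direct $\ell_1$ or Frobenius bound combined with $\E\Normi{T}$ should suffice.

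\textbf{Lower bound.} $\normin{T}\ge\Normi{T}$ gives $\E\normin{T}\ge M/2$. Testing against slices gives $\normin{T}\ge\max_{\text{fibers}}(\sum_{i_k}T^2)^{1/2}$, the norm of a single fiber along the longest mode of length $n$. Since $T_{i}^2\ge \clip_M(T_i)^2$, we get
\[
\E\normin{T}\ge\E\Big(\sum_{j=1}^n\clip_M(Z_j)^2\Big)^{1/2}.
\]
Because each summand is bounded by $M^2$, a Paley--Zygmund or concentration argument (or the elementary inequality $\E\sqrt S\ge \E S/\sqrt{\E S+\max}$-type bounds) shows this is $\gtrsim\sqrt{n\E\clip_M(Z)^2}-O(M)$. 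Adding the $M$ lower bound absorbs the error and gives the left inequality with universal constants.
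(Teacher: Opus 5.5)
Your lower bound is correct and is essentially the paper's. The pointwise inequality $|T_i|\ge|\clip_M(T_i)|$ even lets you skip subtracting $\E\|T-\clip_M(T)\|_1$. The upper bound, however, has a genuine gap at step (iii). You anchor the threshold at $\sigma$ through Markov's inequality for $Z$, but the target is stated in terms of $\E[\clip_M(Z)^2]$. These two scales are not comparable with the polynomial constants the statement claims. Truncating at level $t$ keeps half the variance only once $K\sigma^{2+\eps}t^{-\eps}\le\sigma^2/2$, i.e.\ $t\ge(2K)^{1/\eps}\sigma$, and $M$ can lie far below this level.

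Concretely, take $Z=\pm K^{1/\eps}$ with probability $K^{-2/\eps}/2$ each, and $N\defeq\prod_i n_i\ll K^{2/\eps}$. Then $\sigma=1$, but $M\asymp NK^{-1/\eps}$ and $\E[\clip_M(Z)^2]\asymp N^2K^{-4/\eps}$. So the right-hand side of the claimed upper bound tends to $0$ as $K\to\infty$ with $n,r,\eps$ fixed. Your bulk error $\theta r^3\log^2 n=r^3\sigma n^{1/2-c\eps}\log^2 n$, by contrast, does not depend on $K$ at all.

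Capping the threshold at $M$ does not fix this in general. The error then becomes $Mr^3\log^2 n$, whose factor $\log^2 n$ the $KM$ term cannot absorb, and this error need not be dominated by $r^3\sqrt{n\E[\clip_M(Z)^2]}$. There is also a secondary issue, even at the correct scale. The factor $n^{-c\eps}\log^2 n$ is bounded uniformly in $n$ only by $O(\eps^{-2})$, so it would put an $\eps^{-2}$ in front of the first term, which the statement does not allow.

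The missing idea is to work at the scale of the clipped variable and to transfer the moment assumption to it.

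\textbf{Threshold.} Set $B\defeq\min\bigl(\sqrt{n\E[\clip_M(Z)^2]}/\log^2 n,\ M\bigr)$. Since $B\le M$, we have $\E[\clip_B(Z)^2]\le\E[\clip_M(Z)^2]$. Since $B\log^2 n\le\sqrt{n\E[\clip_M(Z)^2]}$, Theorem~\ref{thm:march} bounds the bulk by $r^3\sqrt{n\E[\clip_M(Z)^2]}$ with no loss in $K$ or $\eps$.

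\textbf{Case $B=M$.} The outliers cost $O(M)$ by Lemma~\ref{lemma:reversetrunc}.

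\textbf{Case $B<M$.} Obtain sparsity by applying Markov's inequality to $\clip_M(Z)$ rather than to $Z$, using $\{|Z|>B\}=\{|\clip_M(Z)|>B\}$. Lemma~\ref{lemma:equivalenceconstanttruncation} says that clipping does not increase the $L^{2+\eps}$-to-$L^2$ norm ratio, so $\E|\clip_M(Z)|^{2+\eps}\le K\bigl(\E[\clip_M(Z)^2]\bigr)^{1+\eps/2}$. This gives $\Pr(|Z|>B)\le K\log^{4+2\eps}n\cdot n^{-1-\eps/2}\lesssim K\eps^{-4}n^{-1-\eps/4}$. The logarithmic loss is thereby moved into the sparsity parameter $L\asymp K/\eps^{4}$ instead of the first term. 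Corollary~\ref{corollary:unboundedsparsetensor} with $\eps/4$ then yields the $r^5\log^2(r/\eps)\,\eps^{-6}KM$ term. The power $\eps^{-6}$ is $\eps^{-4}$ from $L$ times $\eps^{-2}$ from $C(r,\eps/4)$, not a consequence of some $\eps'\asymp\eps^3$.
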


Both the upper and lower bound in Theorem~\ref{thm:seginerIntro} can be deduced from Theorem~\ref{thm:seginer} (see the proof below). We note that
the dependence on $1/\eps$ of the lower bound
in Theorem~\ref{thm:seginerIntro} is necessarily
exponential, as the following example shows.

\begin{example}
    For any fixed $K\ge 1$ and $\eps\in (0,1)$, let 
    \[
        Z = \begin{cases}K^{\frac 1\eps} & \text{ with probability $\frac{K^{-\frac 2\eps}} 2$,}\\
        -K^{\frac 1 \eps} & \text{ with probability $\frac{K^{-\frac 2\eps}} 2$,}\\
        0 & \text{ with probability $1-{K^{-\frac 2\eps}}$.}
        \end{cases}
    \]
    Then $\E Z=0$, $\E Z^2=1$, and $\E |Z|^{2+\eps}=K$ so Assumption~\ref{ass:momentGrowthAssumption} is verified, but $\E \Abs{Z} = K^{-\frac 1 \eps}$. Thus, $\E \normin{T} \leq \E \Norm{T}_1 =   K^{-\frac 1 \eps} \prod_{i=1}^r n_i$ and hence for $K \gg (\prod_{i=1}^r n_i)^{\eps}$ one needs to divide by exponential factors in $1/\eps$ to make a lower bound in terms of the second moment valid.
\end{example}

\begin{proof}[Proofs of Theorems~\ref{thm:seginer}~and~\ref{thm:seginerIntro}]
    We start with the lower bound.
    Clearly, $\E \normin{T}\ge \E \Normi{T}= M/2$. To obtain the other term in the lower bound, we use the fact that the injective norm is at least the $\ell_2$-norm of any row. Without loss of generality, assume $n_r = n$. Then 
    \begin{align}
        \E \normin{T} &\geq \E \normin{\clip_M(T)} - \E \normin{T - \clip_M(T)}\nonumber\\
        &\geq \E \Norm{\clip_M(T)_{1, \ldots,1,\bullet}}_2 - \E \Norm{T - \clip_M(T)}_1\,.\label{eq:colminustrunc}
    \end{align}
    By Lemma~\ref{lemma:reversetrunc},
    the second term satisfies $\E \Norm{T - \clip_M(T)}_1\lesssim M$.
    For the first term, we apply Lemma~\ref{claim:boundNorm}. Since
    $\E \Norm{\clip_M(T)_{1, \ldots,1,\bullet}}^2_2 = n \E \left[\clip_M(Z)^2\right]$, we obtain
    \[
        \E \normin{T} \ge C \sqrt{ {n \E \left[\clip_M(Z)^2\right]}} - C' M\,,
    \]
    for some universal constants $C,C'>0$,
    which concludes the proof of the lower bound of Theorem~\ref{thm:seginer}. To deduce the lower bound of Theorem~\ref{thm:seginerIntro}, note that by Lemma~\ref{lemma:reversetrunc},
    \[
        \sqrt{n \E\left[\clip_M(Z)^2\right]} + M  \gtrsim \sqrt{n \E\left[Z^2\mathbf{1}_{\abs{Z} \leq M}\right]} + n \E \left[\abs{Z}\mathbf{1}_{\abs{Z} > M}\right] \gtrsim \sqrt{n}\E \abs{Z} \, ,
    \]  
    and the right-hand side is at least $\sqrt{n}\sigma K^{-\frac 1 \eps}$ by Lemma~\ref{lemma:paleyzygmund}.
    
    For the upper bound, set 
    \[
        B \defeq \min \left(\frac{\sqrt{n \E\left[\clip_M(Z)^2\right]} }{\log^2 n},  M \right)
    \]
    as the clipping threshold. We use the triangle inequality and bound separately the norms of the clipped and unclipped tensors. For the clipped part, Theorem~\ref{thm:march} implies 
    \[
        \E \normin{ \operatorname{clip}_B(T)- \E \left[\operatorname{clip}_B(T) \right]} \lesssim r^3 \left( \sqrt{n \E\left[\operatorname{clip}_B(Z)^2 \right]} + B \log^2 n \right) \lesssim  r^3 \sqrt{n \E\left[\clip_M(Z)^2 \right]}\,.
    \]
    For the unclipped part, we consider two cases:
    if $B=M$, then  Lemma~\ref{lemma:reversetrunc} directly gives
    \[
        \E \normin{ T- \operatorname{clip}_B(T) - \E \left[T- \operatorname{clip}_B(T)\right]} \le  \E \Norm{ T- \operatorname{clip}_B(T) - \E \left[T- \operatorname{clip}_B(T)\right]}_1 \lesssim M\,.
    \]
    If $B <M$, we apply Corollary~\ref{corollary:unboundedsparsetensor}. To apply it, we first verify its sparsity assumption. Note that
    \[
        \Pr\left(Z \neq \clip_B(Z) \right) = \Pr\left(|Z| >B \right) = \Pr\left(\abs{\clip_M(Z)} >B \right)  \leq \frac{\E \abs{\clip_M(Z)}^{2 + \eps}}{ B^{2+\eps}}\,.
    \]
    Here, the second equality uses $B<M$, and the inequality follows from Markov’s inequality. Using Lemma~\ref{lemma:equivalenceconstanttruncation} and Assumption~\ref{ass:momentGrowthAssumption}, we have
    \[
        \E \abs{\clip_M(Z)}^{2 + \eps}\le \left(\E {\clip_M(Z)}^2\right)^{1+\frac \eps 2} \frac {\E Z^{2+\eps}} {(\E Z^2)^{1+\frac \eps 2}}\le K \left(\E {\clip_M(Z)}^2\right)^{1+\frac \eps 2}\,.
    \]
    Combining these estimates with the definition of $B$ gives
    \[
        \Pr\left(Z - \clip_B(Z) \ne 0\right)\le \frac {K\log^{4+2\eps} n} {n^{1+\frac \eps 2}}\lesssim \frac {K} {n^{1+\frac \eps 4} \eps^4}\,.
    \]
    Thus, we can set $L \defeq C\frac{K }{ \eps^{4} }$ for some constant $C>0$ and use Corollary~\ref{corollary:unboundedsparsetensor} to obtain
    \[
        \E \normin{ T- \operatorname{clip}_B(T) - \E \left[T- \operatorname{clip}_B(T)\right]} \lesssim \frac{r^5 \log^2 \frac {4r} \eps }{ \eps^{6} } KM\,.
    \]
    This bounds the unclipped part in both cases and completes the proof.
\end{proof}

\begin{remark}[Necessity of the dependence on $\varepsilon$]\label{rem:tightnessSeginer}
    As in Section~\ref{sec:extensions}, the dependence on $\varepsilon$ and $r$ can be improved in certain parameter regimes; for example, when $r$ satisfies~\eqref{eq:rAssumption} or when $n^{\frac \eps 4} \geq \log^{4+2\eps} n$. However, the divergence as $\varepsilon\to 0$ is unavoidable.
    For example, let $A$ be an $n\times n$
    random matrix such that
    \[
        A_{ij}\overset{\text{i.i.d.}}\sim \frac 1{2n} (\delta_{\sqrt n} + \delta_{-\sqrt n}) + \frac 12 \left(1 - \frac 1{n}\right)(\delta_1 + \delta_{-1})\,.
    \]
    Then the maximum column norm
    of $A$ is of order $\sqrt{\frac {n\log n} {\log \log n}}$. On the other hand, if we denote by $\sigma$ the standard deviation of the entries, then $\sigma \sqrt n + \E \|A\|_\infty = \Theta(\sqrt n)$.
\end{remark}

%% file: AuxillaryLemmasVJ26.tex
\section{Auxiliary lemmas}\label{sec:truncationinequalities}

\begin{lemma}\label{lemma:subsetunionboundseries}
    Let $r\in \N$, $n \geq 2$ and $\lambda > 2$. Then,
    $$ \sum_{k_1, \ldots, k_r =1}^n \prod_{i=1}^r \binom n {k_i} n^{-\lambda k_i} \leq n^{-r(\lambda -2)}.$$
\end{lemma}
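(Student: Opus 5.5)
The sum factorizes over the coordinates. Write
\[
    S_\lambda \defeq \sum_{k=1}^n \binom n k n^{-\lambda k}\,,
\]
so that the left-hand side equals $S_\lambda^r$. It therefore suffices to prove the one-dimensional bound $S_\lambda\le n^{-(\lambda-2)}$ and raise it to the $r$-th power.

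For the one-dimensional bound, use $\binom nk\le n^k$ to get
\[
    S_\lambda\le \sum_{k\ge 1} n^{-(\lambda-1)k}=\frac{n^{-(\lambda-1)}}{1-n^{-(\lambda-1)}}\,.
\]
It then remains to check that this is at most $n^{-(\lambda-2)}$. This is equivalent to
\[
    n^{-1}\le 1-n^{-(\lambda-1)}\,,\qquad\text{i.e.}\qquad n^{-1}+n^{-(\lambda-1)}\le 1\,.
\]
Since $\lambda>2$ gives $\lambda-1>1$, we have $n^{-(\lambda-1)}< n^{-1}$. Hence $n^{-1}+n^{-(\lambda-1)}<2n^{-1}\le 1$ for $n\ge 2$.

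The only step that needs any care is this final numeric check, specifically whether the slack is enough at the boundary $n=2$. The strict inequality $\lambda>2$ handles it, since $2n^{-1}=1$ exactly when $n=2$ and the bound used is strict. No other obstacle is expected. Putting the pieces together, $S_\lambda^r\le n^{-r(\lambda-2)}$, which is the claim.
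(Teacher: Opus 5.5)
Your proof is correct and follows essentially the same route as the paper: factorize, bound $\binom nk$, extend to an infinite series, and compare with $n^{-(\lambda-2)}$ using $n\ge 2$. The only difference is cosmetic: you use $\binom nk\le n^k$ and a geometric series, whereas the paper uses $\binom nk\le n^k/k!$, the exponential series, and $e^x-1\le 2x$ on $[0,\frac12]$. Incidentally, your final check needs only $n^{-(\lambda-1)}\le n^{-1}$, so the strictness of $\lambda>2$ is not actually needed at $n=2$.
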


\begin{proof}
    Using $\binom{n}{k} \leq \frac{n^k}{k!}$, we get
    \[
        \sum_{k_1, \ldots, k_r =1}^n \binom{n}{k_1} \cdots \binom{n}{k_r} n^{-\lambda(k_1 + \ldots +k_r)} \leq \sum_{k_1, \ldots, k_r =1}^\infty \frac{ n^{-(\lambda-1)(k_1 + \ldots +k_r)}}{k_1! \cdots k_r!} = (\exp(n^{-(\lambda-1)})-1)^r\,.
    \]
    For $x\in [0,\frac 12]$, we have $e^x-1 \leq 2x$, and since $n \geq 2$, this gives
    \[
        (\exp(n^{-(\lambda-1)})-1)^r \leq \left( {2}n^{-(\lambda -1)}\right)^r \leq n^{-r(\lambda -2)}\,.\qedhere
    \]
\end{proof}

\begin{lemma}\label{lemma:reversetrunc}
    Let $X_1, \ldots, X_N\ge 0$ be independent random variables and $M \defeq 2\E \max_{i\in [N]} X_i$. Then,
    \[
        \sum_{i=1}^N \E \left[X_i \mathbf{1}_{X_i > M}\right] \lesssim M\,.
    \]
\end{lemma}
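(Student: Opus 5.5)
We need to prove Lemma~\ref{lemma:reversetrunc}: for independent nonnegative $X_1,\ldots,X_N$ with $M \defeq 2\E\max_i X_i$, we have $\sum_i \E[X_i\mathbf 1_{X_i>M}]\lesssim M$. This is a Hoffmann-Jørgensen-type statement, and I would prove it with a layer-cake decomposition combined with a counting argument.

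\textbf{Step 1: a bound on the exceedance counts.} Let $Y\defeq\max_i X_i$. For $s\ge M$, set $p_i(s)\defeq\Pr(X_i>s)$ and $S(s)\defeq\sum_i p_i(s)$. By Markov's inequality, $\Pr(Y>M)\le \E Y/M=1/2$. By independence,
\[
\Pr(Y\le s)=\prod_i(1-p_i(s))\le e^{-S(s)} .
\]
Combining the two gives $1-e^{-S(s)}\le \Pr(Y>s)$, hence $S(s)\le \log 2$ for every $s\ge M$. Using $1-e^{-x}\ge x/2$ on $[0,\log 2]$, this upgrades to
\[
S(s)\le 2\Pr(Y>s)\qquad\text{for all } s\ge M .
\]

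\textbf{Step 2: layer cake.} For each $i$,
\[
\E[X_i\mathbf 1_{X_i>M}] = M\,p_i(M)+\int_M^\infty p_i(s)\,\diff s .
\]
Summing over $i$ and applying Step 1,
\[
\sum_i \E[X_i\mathbf 1_{X_i>M}] = M S(M)+\int_M^\infty S(s)\,\diff s \le M\log 2 + 2\int_M^\infty \Pr(Y>s)\,\diff s .
\]
The last integral is at most $\E Y = M/2$, so the total is at most $(\log 2+1)M\lesssim M$.

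\textbf{Main obstacle.} The only delicate point is Step 1. One must convert a tail bound on the maximum into a bound on the sum of the individual tails. The constant $2$ in the definition of $M$ is what makes this work: it forces $\Pr(Y>M)\le 1/2<1$, which keeps $S(s)$ bounded, and that in turn allows the linearization $1-e^{-x}\gtrsim x$. Everything else is routine.
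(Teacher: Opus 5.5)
Your proof is correct and follows the paper's argument essentially step for step. Both split the expectation at $M$ via the tail integration formula, use independence to bound $\sum_i \Pr(X_i>s)$ by $-\log\Pr(\max_i X_i\le s)$, keep $\Pr(\max_i X_i>s)\le 1/2$ for $s\ge M$ by Markov, and linearize; your inequality $1-e^{-x}\ge x/2$ on $[0,\log 2]$ is just the inverted form of the paper's $-\log(1-x)\le 2x$ on $[0,\tfrac12]$.
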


\begin{proof}
    We start with the tail integration formula
    \begin{align}
            \sum_{i=1}^N \E \left[X_i \mathbf{1}_{X_i > M}\right] &= \sum_{i=1}^N \int_{0}^{\infty} \Pr \left( X_i \mathbf{1}_{X_i > M}> t  \right) \, \mathrm dt\nonumber\\ 
            &= M \sum_{i=1}^N \Pr \left( X_i> M  \right) + \int_{M}^{\infty} \sum_{i=1}^N \Pr \left( X_i > t  \right) \, \mathrm dt\,.\label{eq:layertrunc}
    \end{align}
    By independence,
    \[
        \sum_{i=1}^N \Pr \left( X_i > t  \right) \leq - \log \Pr \left( \max_{i\in [N]} X_i \leq t  \right) = - \log\left( 1- \Pr\left( \max_{i\in [N]} X_i > t  \right)\right)\,.
    \]
    For $t \geq M$, Markov's inequality gives $\Pr \left( \max_{i\in [N]} X_i > t \right) \leq \frac 12$. We use the fact that $-\log(1-x) \leq 2x$ holds for $0 \leq x \leq \frac 12$ and obtain
    \[
        - \log\left( 1- \Pr \left( \max_{i\in [N]} X_i > t  \right)\right) \leq 2\Pr \left( \max_{i\in [N]} X_i > t  \right)\,.
    \]
    Apply this estimate to both terms in~\eqref{eq:layertrunc}. For the first term, Markov's inequality gives a bound of $M$. For the integral term, the tail integration formula gives the same bound.
\end{proof}

\begin{lemma}\label{claim:boundNorm}
    Let $X$ be a random vector with independent entries
    satisfying $\|X\|_\infty\le M$ 
     almost surely for some $M>0$. Then,
     \[\left(\E \Norm{X}^2_2\right)^{1/2}\le \sqrt 2 \E \Norm{X}_2 + M\,.\]
\end{lemma}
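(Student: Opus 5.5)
The plan is to compare the second moment of $\|X\|_2$ with its first moment through a fourth-moment (Paley--Zygmund type) estimate on $S\defeq\|X\|_2^2=\sum_i X_i^2$. Write $\mu\defeq \E S$. If $\mu\le M^2$ the claim is trivial, since then $(\E S)^{1/2}\le M$. So assume $\mu> M^2$.

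The first step is to bound the variance of $S$. By independence and the almost sure bound $|X_i|\le M$,
\[
\Var S=\sum_i \Var(X_i^2)\le \sum_i \E X_i^4\le M^2\sum_i \E X_i^2 = M^2\mu\,.
\]
Hence $\E S^2\le \mu^2+M^2\mu$.

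The second step converts this into a lower bound on $\E\sqrt S$. Use the Hölder interpolation $\E S=\E[S^{1/4}S^{3/4}]\le (\E S^{1/2})^{2/3}(\E S^2)^{1/3}$, which is Hölder with exponents $3/2$ and $3$. This gives
\[
(\E\sqrt S)^2\ge \frac{(\E S)^3}{\E S^2}\ge \frac{\mu^2}{\mu+M^2}\,.
\]
In the regime $\mu>M^2$ the right-hand side is at least $\mu/2$, so $\sqrt\mu\le \sqrt2\,\E\|X\|_2$. Combining the two regimes gives $(\E\|X\|_2^2)^{1/2}\le \max(\sqrt2\,\E\|X\|_2, M)\le \sqrt2\,\E\|X\|_2+M$.

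There is no serious obstacle here. The only points needing care are choosing the right Hölder interpolation, and using $\E X_i^4\le M^2\E X_i^2$ rather than a cruder bound, so that the error term scales like $M^2\mu$ and not $M^4$. If one wanted the sharper form $\sqrt{\mu}\le \sqrt2\,\E\|X\|_2+M$ without splitting into cases, one can instead note directly that $\frac{\mu^2}{\mu+M^2}\ge \frac{\mu}{2}$ when $\mu\ge M^2$, and handle $\mu<M^2$ trivially as above.
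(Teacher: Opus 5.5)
Your proof is correct, and it takes a different route from the paper's. The paper argues additively. From the pointwise bound $\sqrt{\mu}-\sqrt{S}\le\sqrt{|S-\mu|}$ and Jensen it gets $\sqrt{\mu}\le\E\sqrt{S}+(\Var S)^{1/4}$. It then uses the same variance estimate $\Var S\le M^2\mu$ that you use, and absorbs $(M^2\mu)^{1/4}\le\frac12(M+\sqrt{\mu})$ into the left-hand side, with no case split. You argue multiplicatively instead: you use the interpolation $(\E\sqrt{S})^2\ge(\E S)^3/\E S^2$ and split according to whether $\mu\le M^2$. In both proofs the variance bound is the only place where independence and boundedness enter.

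The paper's route is a short deviation argument whose first step holds for any nonnegative random variable. Yours is slightly longer but sharper. Taken literally, the paper's final rearrangement gives $\sqrt{\mu}\le 2\,\E\|X\|_2+M$, not the stated constant $\sqrt2$. Moreover, its intermediate inequality $\sqrt{\mu}\le\E\|X\|_2+(M^2\mu)^{1/4}$ cannot by itself imply the stated bound: when $\E\|X\|_2=M$ it still allows $\sqrt{\mu}=2.5M>(1+\sqrt2)M$. Your argument proves the lemma exactly as stated, and even the stronger form $\sqrt{\mu}\le\max(\sqrt2\,\E\|X\|_2,M)$. This makes no difference for the paper's only use of the lemma, the lower bound in Theorem~\ref{thm:seginer}, where constants are absorbed.

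One slip to fix: the H\"older step should factor $S=S^{1/3}\cdot S^{2/3}$. Exponents $3/2$ and $3$ applied to $S^{1/4}\cdot S^{3/4}$ would produce $\E S^{3/8}$ and $\E S^{9/4}$. The inequality $\E S\le(\E S^{1/2})^{2/3}(\E S^2)^{1/3}$ that you actually use is correct.
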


\begin{proof}
    Applying $\sqrt{x} - \sqrt{y}\le \sqrt{|x-y|}$ and then Jensen's inequality, we have
    \[
        \left(\E \Norm{X}_2^2\right)^{\frac 12}\le \E \Norm{X}_2 + \E \abs{\Norm{X}_2^2 - \E \Norm{X}_2^2}^{\frac 12}\le \E \Norm{X}_2 + \left(\Var \Norm{X}_2^2\right)^{\frac 14}\,.
    \]
    By independence and boundedness of the entries,
    \[
        \Var \Norm{X}_2^2 = \sum_i \Var X_i^2\le \sum_i \E X_i^4\le M^2 \sum_i \E X_i^2 = M^2 \E \Norm{X}_2^2\,.
    \]
    Using $\sqrt{xy}\le \frac {x+y} 2$
    and combining these estimates,
    \[
         \sqrt{\E \Norm{X}_2^2}\le \E \Norm{X}_2 + \frac {M + \sqrt{\E \Norm{X}_2^2}} 2\,,
    \]
    and the claim follows by rearranging the inequality.
\end{proof}

\begin{lemma}\label{lemma:equivalenceconstanttruncation}
    Let $0<q<p$, and let $X$ be a random variable such
    that $\E \abs{X}^q>0$ and $\E \abs{X}^p<\infty$.
    Then for any $M>0$,
    \[
        \frac {(\E \abs{\clip_M(X)}^p)^{1/p}} {(\E \abs{\clip_M(X)}^q)^{1/q}} \le \frac {(\E \abs{X}^p)^{1/p}} {(\E \abs{X}^q)^{1/q}}\,.
    \]
\end{lemma}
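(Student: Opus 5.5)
The claim is that clipping at level $M$ can only decrease the ratio of the $L^p$ and $L^q$ norms. My plan is to reduce it to a monotonicity statement. Write $Y \defeq \abs{X}$, and note that $\abs{\clip_M(X)} = \min(Y, M)$. Set
\[
    \psi(M) \defeq \frac{\E \min(Y,M)^p}{(\E \min(Y,M)^q)^{p/q}}\,.
\]
The goal is to show that $\psi$ is nondecreasing in $M$ on $(0,\infty)$. By monotone convergence, $\psi(M) \to \E Y^p/(\E Y^q)^{p/q}$ as $M\to\infty$. Here $\E Y^q<\infty$ because $q<p$ and $\E Y^p<\infty$, and $\E Y^q>0$ by assumption. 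Also, for every $M>0$ the denominator is positive, since $\Pr(Y>0)>0$. Taking $p$-th roots then gives the statement.

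For the monotonicity I would differentiate in $M$. For $f(y)=y^s$ we have
\[
    \frac{d}{dM}\E \min(Y,M)^s = sM^{s-1}\Pr(Y>M)
\]
almost everywhere, and the function is absolutely continuous in $M$, since it equals $\int_0^M s u^{s-1}\Pr(Y>u)\,du$ by the layer-cake formula. Write $a_s(M)\defeq \E\min(Y,M)^s$. Computing the logarithmic derivative,
\[
    \frac{d}{dM}\log\psi = \Pr(Y>M)\left(\frac{pM^{p-1}}{a_p} - \frac{p}{q}\cdot\frac{qM^{q-1}}{a_q}\right) = p\Pr(Y>M)\left(\frac{M^{p-1}}{a_p}-\frac{M^{q-1}}{a_q}\right).
\]
So it suffices to show that $M^{p}a_q \ge M^{q} a_p$, that is, $\E[M^{p-q}\min(Y,M)^q] \ge \E \min(Y,M)^p$. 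This holds pointwise: with $Z=\min(Y,M)\le M$, we have $Z^p = Z^{p-q}Z^q \le M^{p-q}Z^q$.

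It follows that $\log\psi$ is absolutely continuous with a nonnegative derivative almost everywhere, hence nondecreasing. Letting $M'\to\infty$ in $\psi(M)\le\psi(M')$ concludes the proof.

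The only delicate point is justifying the absolute continuity of $a_s$, and I handle it through the layer-cake integral representation above. If one prefers to avoid calculus, the same inequality can be obtained by comparing $M<M'$ directly via the integral representation, but the derivative argument is cleaner.
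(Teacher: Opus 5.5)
Your proposal is correct and follows essentially the same route as the paper. Both use the layer-cake representation $\E\min(\abs{X},M)^s = s\int_0^M t^{s-1}\Pr(\abs{X}>t)\,\mathrm dt$, show the logarithmic derivative of the ratio is nonnegative via $\E\min(\abs{X},M)^p\le M^{p-q}\,\E\min(\abs{X},M)^q$, and let $M\to\infty$. Your remarks on absolute continuity and positivity of the denominators fill in details the paper leaves implicit.
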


\begin{proof}
    Let
    \[
        m_{p}(M) \coloneqq \E \abs{\clip_M(X)}^p = p \int_0^M  t^{p-1} \Pr\left( |X|> t\right) \diff t\,,
    \]
    where the equality follows from the tail integration formula and a change of variables. Differentiating gives
    \[
        m_{p}'(M)  = p M^{p -1} \Pr\left(|X| > M\right)\,.
    \]
    The logarithmic derivative of the quotient is therefore
    \[
        \left(\log \frac {m_{p}^{1/p}} {m_q^{1/q}}\right)'(M) = \frac 1p \frac {m_p'(M)} {m_p(M)} - \frac 1q \frac {m_q'(M)} {m_q(M)} = \Pr\left(\abs{X} > M\right)\left(\frac {M^{p-1}} {m_p(M)} - \frac {M^{q-1}} {m_q(M)}\right)\ge 0\,,
    \]
    since $m_p(M)\le M^{p-q} m_q(M)$.
    Hence, the function $M\mapsto \frac {m_p^{1/p}(M)}{m_q^{1/q}(M)}$ is non-decreasing.
    Taking the limit $M \to \infty$ proves the desired inequality.
\end{proof}

\begin{lemma}\label{lemma:paleyzygmund}
    Let $Z$ be a random variable satisfying
    Assumption~\ref{ass:momentGrowthAssumption}
    for some $\eps\in (0,1)$ and $K\ge 1$. Then,
    \[
        \frac {\sqrt{\E Z^2}} {\E \abs{Z}} \le K^{\frac 1 \eps}\,.
    \]
\end{lemma}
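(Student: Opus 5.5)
This is a log-convexity (Lyapunov) interpolation between the first, second and $(2+\eps)$-th moments. I will write $\sigma^2 = \E Z^2$ and use Hölder's inequality with exponents chosen so that $Z^2 = |Z|^{\theta}\cdot |Z|^{2-\theta}$ splits into a power of $|Z|$ and a power of $|Z|^{2+\eps}$.

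\textbf{Choosing the split.} We want
\[
    \E Z^2 \le (\E|Z|)^{a}\,(\E|Z|^{2+\eps})^{b}, \qquad a+b=1, \qquad a + (2+\eps)b = 2 .
\]
Solving gives $b=\frac{1}{1+\eps}$ and $a=\frac{\eps}{1+\eps}$.

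\textbf{Applying Hölder.} Write $Z^2 = |Z|^{a}\cdot |Z|^{(2+\eps)b}$ and apply Hölder with exponents $1/a$ and $1/b$. This yields
\[
    \sigma^2 \le (\E|Z|)^{\frac{\eps}{1+\eps}}\,\bigl(K\sigma^{2+\eps}\bigr)^{\frac{1}{1+\eps}} .
\]

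\textbf{Rearranging.} Raise both sides to the power $1+\eps$:
\[
    \sigma^{2+2\eps} \le (\E|Z|)^{\eps}\, K\, \sigma^{2+\eps}.
\]
Dividing by $\sigma^{2+\eps}$ gives $\sigma^{\eps} \le K\,(\E|Z|)^{\eps}$, hence $\sigma/\E|Z| \le K^{1/\eps}$, which is the claim.

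\textbf{Degenerate case and obstacle.} The division requires $\sigma>0$. If $\sigma=0$, the statement is vacuous or interpreted trivially; we treat that case by convention. There is no real obstacle; the only step needing care is getting the Hölder exponents right.
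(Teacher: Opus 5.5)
Your proposal is correct and is essentially the paper's proof: the same H\"older interpolation $\E Z^2\le (\E|Z|)^{\frac{\eps}{1+\eps}}(\E|Z|^{2+\eps})^{\frac{1}{1+\eps}}$, followed by Assumption~\ref{ass:momentGrowthAssumption} and rearranging. Your note that $\sigma>0$ is needed for the division is a fair remark; the paper leaves this degenerate case implicit as well.
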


\begin{proof}
    By H{\"o}lder's inequality,
    \[
        \E Z^2\le (\E |Z|)^{\frac \eps {1+\eps}} (\E |Z|^{2+\eps})^{\frac 1 {1+\eps}}\le K^{\frac 1 {1+\eps}} (\E |Z|)^{\frac \eps {1+\eps}} (\E Z^2)^{\frac {1 + \frac \eps 2} {1+\eps}}\,.
    \]
    Rearranging the inequality gives the claim.
\end{proof}

\section{Seginer-type theorems under moment equivalence}
\label{sec:equivalenceSeginer}
\begin{proposition}[Equivalence between Theorem~\ref{thm:seginerIntro} and Conjecture~\ref{conj:seginerTensor} under Assumption~\ref{ass:momentGrowthAssumption}] \label{prop:proofSeginerEquivalence}
    Let $r\ge 2$ and $n_1, \ldots, n_r\ge 2$ be integers.
    Let $T\in \R^{n_1\times \ldots n_r}$ be a random tensor whose entries
    are i.i.d.\ copies of a random variable satisfying Assumption~\ref{ass:momentGrowthAssumption}
    for some $\eps\in (0,1)$ and $K\ge 1$,
    and let $\sigma$ denote the standard deviation of that random variable. Then, letting $n\defeq \max(n_1, \ldots, n_r)$,
    \[
        \max_{k\in [r]} \E \left[\max_{\substack{i_1\in [n_1], \ldots, i_{k-1}\in [n_{k-1}], i_{k+1}\in [n_{k+1}], \ldots, i_r\in [n_r]}} \left(\sum_{i_k=1}^{n_k} T_{i_1, \ldots, i_r}^2\right)^{1/2}\right] \underset{r,K,\eps}\asymp \sigma \sqrt n + \E \Normi{T}\,.
    \]
\end{proposition}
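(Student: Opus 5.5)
Write $R_k \defeq \max_{i_{-k}} (\sum_{i_k} T_{i_1,\ldots,i_r}^2)^{1/2}$ for the maximal fibre norm in direction $k$. We prove the two inequalities separately.

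\textbf{Lower bound.} This direction is elementary and uses no moment assumption. Every fibre norm dominates every entry of that fibre, so $R_k\ge \Normi{T}$ for each $k$. Hence $\max_k \E R_k\ge \E\Normi{T}$.

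For the $\sigma\sqrt n$ term, choose $k$ with $n_k=n$; then $\E R_k$ is at least the expected norm of a single fibre of length $n$. Set $M\defeq 2\E\Normi{T}$ and clip at $M$. Lemma~\ref{claim:boundNorm} applied to the clipped fibre gives
\[
    \E\|{\clip_M}(\text{fibre})\|_2 \gtrsim \sqrt{n\E\clip_M(Z)^2}-M .
\]
The same chain as in the lower bound of Theorem~\ref{thm:seginer}, using Lemma~\ref{lemma:reversetrunc} and Lemma~\ref{lemma:paleyzygmund}, then gives $\sqrt{n\E\clip_M(Z)^2}+M\gtrsim \sqrt n\,\E|Z|\ge K^{-1/\eps}\sigma\sqrt n$. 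Adding the two lower bounds yields $\max_k\E R_k\gtrsim_{K,\eps}\sigma\sqrt n+\E\Normi{T}$.

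\textbf{Upper bound.} Fix $k$ and let $N\defeq\prod_i n_i$ be the total number of entries. We split each entry at a threshold $B$ to be chosen, writing
\[
    T^2=\clip_B(T)^2+\bigl(T^2-\clip_B(T)^2\bigr),
\]
so that $R_k\le \max_{i_{-k}}\|\clip_B(\text{fibre})\|_2+\max_{i_{-k}}\|\text{large part}\|_2$.

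For the bounded part, each fibre has $\E\sum \clip_B^2\le n\sigma^2$. Summands in $[0,B^2]$ obey a Bernstein/Chernoff tail, so a union bound over at most $N\le n^r$ fibres gives
\[
    \max\sum \clip_B^2\lesssim n\sigma^2+rB^2\log n .
\]
This is fine provided $B\lesssim \sigma\sqrt{n/(r\log n)}+M$. So take $B\defeq\max\bigl(M,\sigma\sqrt{n}/\log n\bigr)$.

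For the large part, note that the $\ell_2$ norm is at most the $\ell_1$ norm. The number of entries exceeding $B$ in a single fibre is then controlled as follows. By Markov and Assumption~\ref{ass:momentGrowthAssumption},
\[
    \Pr(|Z|>B)\le K\sigma^{2+\eps}B^{-2-\eps}\le K(\log n)^{2+\eps}n^{-1-\eps/2}.
\]
Therefore, by Chernoff and a union bound over $n^r$ fibres, every fibre contains at most $O_{r,\eps}(1)$ entries larger than $B$ with high probability, and only in an event of probability $n^{-\Omega(r)}$ does some fibre contain more. On the good event, the large part of each fibre has $\ell_2$ norm at most $O_{r,\eps}(1)\cdot\Normi{T}$.

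The bad event must be handled in expectation. On it, bound $R_k$ crudely by $\|T\|_2$. Then use H\"older with the $2+\eps$ moment, i.e.\ $\E\|T\|_2^2\le N\sigma^2$ and $\Pr(\text{bad})\le n^{-Cr}$ with $C$ large depending on $\eps$, so that the contribution $\sqrt{N\sigma^2\, n^{-Cr}}$ is negligible compared with $\sigma$. Collecting the three pieces gives $\E R_k\lesssim_{r,K,\eps}\sigma\sqrt n+\E\Normi{T}$.

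The main obstacle is this last step, specifically making the count of large entries per fibre uniformly $O(1)$. This needs the exponent $1+\eps/2>1$ (up to logarithmic factors) in the tail bound, and the Chernoff tail $(e\,n p/s)^s$ with $np\le n^{-\eps/3}$ beats $n^{r}$ once $s\gtrsim r/\eps$. Handling the rare bad event without a loss in $N$ may require clipping at $M$ first and invoking Lemma~\ref{lemma:reversetrunc} instead of the crude Hölder bound.
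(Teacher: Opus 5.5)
There is a genuine gap in the upper bound, although it is easy to repair. Your strategy is otherwise the paper's. You truncate at a level $B$ of order $\sigma\sqrt n$ up to logarithms and control the bounded part of every fibre by Bernstein plus a union bound. You then use Markov with Assumption~\ref{ass:momentGrowthAssumption} and Chernoff to show each fibre has $O_{r,K,\eps}(1)$ entries above $B$, and you handle the bad event by Cauchy--Schwarz.

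The problem is your threshold $B\defeq\max(M,\sigma\sqrt n/\log n)$, which breaks the bounded part. Taking square roots in $\max\sum\clip_B^2\lesssim n\sigma^2+rB^2\log n$ gives $\sigma\sqrt n+B\sqrt{r\log n}$. So the condition you need is $B\lesssim(\sigma\sqrt n+M)/\sqrt{r\log n}$, not $B\lesssim\sigma\sqrt{n/(r\log n)}+M$: the $M$ term must also be divided by $\sqrt{r\log n}$.

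The failing regime is $M\gg\sigma\sqrt n$, and it does occur under the assumption. For example, take all $n_i=n$ and $\Pr(|Z|>t)\asymp t^{-2-2\eps}$; then $\E\Normi{T}\asymp n^{r/(2+2\eps)}\gg\sqrt n$ for $r\ge 2$. There your choice gives $B=M$, and Bernstein only yields $\E\max\|\clip_M(\text{fibre})\|_2\lesssim\sigma\sqrt n+M\sqrt{r\log n}$. This is a $\sqrt{\log n}$ loss, not a constant depending on $r,K,\eps$. Bernstein alone cannot control the entries between $\sigma\sqrt n/\log n$ and $M$; they must be counted in the large part.

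The max with $M$ also buys you nothing for the large part. Your Markov estimate $\Pr(|Z|>B)\le K(\log n)^{2+\eps}n^{-1-\eps/2}$ only uses $B\ge\sigma\sqrt n/\log n$. So take $B\defeq\sigma\sqrt n/\log n$ (the paper takes $\sigma\sqrt{n/\log n}$). Then $rB^2\log n\le rn\sigma^2$, and the rest of your argument goes through and coincides with the paper's.

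Two smaller remarks. First, your closing worry about the bad event is unfounded. With $\Pr(\text{bad})\le n^{-r}$ and $N\le n^r$, Cauchy--Schwarz already gives $\E[R_k\mathbf 1_{\text{bad}}]\le \Pr(\text{bad})^{1/2}(N\sigma^2)^{1/2}\le\sigma$, exactly as in the paper. No clipping at $M$ or Lemma~\ref{lemma:reversetrunc} is needed there.

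Second, your lower bound is correct but roundabout. The paper works directly with the unclipped fibre: $\E\|T_{1,\ldots,1,\bullet}\|_2\ge n^{-1/2}\E\|T_{1,\ldots,1,\bullet}\|_1=\sqrt n\,\E|Z|\ge K^{-1/\eps}\sigma\sqrt n$ by Lemma~\ref{lemma:paleyzygmund}. This makes the detour through $\clip_M$, Lemma~\ref{claim:boundNorm} and Lemma~\ref{lemma:reversetrunc} unnecessary.
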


\begin{proof}
    To simplify notations, we assume without loss of generality that $n_r=n$.
    For the lower bound, it suffices to lower bound the term for $k=r$. Note first that
    the inequality
    \begin{equation}
        \max_{i_1,\ldots,i_{r-1}} \left(\sum_{i_r=1}^n T_{i_1, \ldots, i_r}^2\right)^{1/2}\ge \Normi{T}\label{eq:lb1}
    \end{equation}
    holds pointwise. For the other term, using
    $\|x\|_1\le \sqrt n\cdot \|x\|_2$ and Lemma~\ref{lemma:paleyzygmund},
    \begin{equation}
        \E \max_{i_1, \ldots, i_{r-1}} \Norm{T_{i_1, \ldots, i_{r-1},\bullet}}_2\ge \E \Norm{T_{1,\ldots,1,\bullet}}_2\ge \frac {\E \Norm{T_{1,\ldots,1,\bullet}}_1} {\sqrt n} \ge K^{-\frac 1 \eps} \sigma \sqrt{n}\,.\label{eq:lb2}
    \end{equation}
    Combining \eqref{eq:lb1} and~\eqref{eq:lb2} completes the proof of the lower bound.
    
    For the upper bound, fix a threshold $B>0$ and decompose
    $T = T^{\le B} + T^{>B}$, where $T^{\le B}$ contains the entries of $T$
    of magnitude at most $B$, and $T^{>B}$ contains the remaining entries. Similarly, we use the notation
    $Z^{\le B}\defeq Z\mathbf 1_{|Z|\le B}$ and $Z^{>B}\defeq Z \mathbf 1_{|Z|>B}$, where $Z$ is a random variable distributed as the entries of $T$.
    
    To simplify notation, we treat the case $k=r$; the same argument applies to arbitrary $k\in [r]$.
    Moreover, by the triangle inequality, it suffices to bound $\E \max_{i_1, \ldots, i_{r-1}} \|T^{\le B}_{i_1, \ldots, i_{r-1},\bullet}\|_2$ and $\E \max_{i_1, \ldots, i_{r-1}} \|T^{>B}_{i_1, \ldots, i_{r-1},\bullet}\|_2$.

    We start with $T^{\le B}$. Bernstein's inequality (Lemma~\ref{lem:bernstein}), followed by a union bound over the rows $(i_1, \ldots, i_{r-1})$ and integrating the resulting tail bound, implies
    \[
        \E \max_{i_1, \ldots, i_{r-1}} \|T^{\le B}_{i_1, \ldots, i_{r-1},\bullet}\|_2^2\lesssim n \E \left[\left(Z^{\le B}\right)^2\right] +  \left(nr \log n \Var \left[\left(Z^{\le B}\right)^2\right]\right)^{1/2} + B^2 r \log n\,.
    \]
    Since
    \[
        \E \left[\left(Z^{\le B}\right)^2\right]\le \sigma^2\quad \text{and} \quad\Var \left[\left(Z^{\le B}\right)^2\right]\le B^2 \sigma^2\,,
    \]
    we choose $B \defeq \sigma \sqrt{n/\log n}$. Then, by Jensen's inequality, we obtain 
    \begin{equation}
        \E \max_{i_1, \ldots, i_{r-1}} \|T^{\le B}_{i_1, \ldots, i_{r-1},\bullet}\|_2\le \left(\E \max_{i_1, \ldots, i_{r-1}} \|T^{\le B}_{i_1, \ldots, i_{r-1},\bullet}\|^2_2\right)^{\frac 12} \lesssim_r  \sigma \sqrt {n}\,, \label{eq:ub1}
    \end{equation}
    as desired. 
    It remains to handle $T^{>B}$. By
    Markov's inequality and Assumption~\ref{ass:momentGrowthAssumption},
    \[
        \Pr\left( Z^{>B}\neq 0\right) \le \frac {K \sigma^{2+\eps}} {B^{2+\eps}} \le \frac {K \log^{1+\frac \eps 2} n} {n^{1+\frac \eps 2}}\,.
    \]
    Applying the Chernoff bound (Lemma~\ref{lemma:chernoff}) to bound
    the number of nonzero entries in one
    row, followed by a union bound over all
    rows, yields the following: there exists $C=C(\eps,K,r)\ge 1$ such that
    $T^{>B}$ has at most $C$ nonzero entries in each
    row $(i_1, \ldots, i_{r-1})$, with probability at least $1-n^{-r}$. Let $\Omega$
    denote this event. Then,
    \begin{align}
        \E \max_{i_1, \ldots, i_{r-1}} \norm{T_{i_1, \ldots, i_{r-1},\bullet}^{>B}}_2 &= \E \left[\max_{i_1, \ldots, i_{r-1}} \norm{T_{i_1, \ldots, i_{r-1},\bullet}^{>B}}_2 \mathbf 1_{\Omega}\right] + \E \left[\max_{i_1, \ldots, i_{r-1}} \norm{T_{i_1, \ldots, i_{r-1},\bullet}^{>B}}_2 \mathbf 1_{\Omega^{\mathrm c}}\right]\nonumber\\
        &\le \sqrt C \E \Normi{T} + \Pr\left(\Omega^{\mathrm c}\right)^{\frac 12} \left(\E \Norm{T}_2^2\right)^{\frac 12}\nonumber\\
        &\lesssim_{\eps,K,r} \E \Norm{T}_\infty + \sigma \sqrt n\,.\label{eq:ub2}
    \end{align}
    Combining~\eqref{eq:ub1} and~\eqref{eq:ub2} completes the proof.
\end{proof}